\documentclass[10pt,a4paper]{amsart}
\usepackage{fullpage} 
\usepackage{amsmath}
\usepackage{amsthm}
\usepackage{amssymb}
\usepackage{mathtools}
\usepackage[only,mapsfrom]{stmaryrd}
\usepackage{graphicx}
\usepackage{color}
\usepackage{enumitem}
\usepackage[colorlinks,citecolor=blue,linkcolor=blue,urlcolor=blue,filecolor=blue,breaklinks]{hyperref}
\usepackage{tensor}

\setcounter{tocdepth}{1}
\numberwithin{equation}{section}
\numberwithin{figure}{section}

\newtheorem{thm}{Theorem}[section]
\newtheorem{athm}{Theorem}
\newtheorem{aprop}[athm]{Proposition}

\newtheorem{lem}[thm]{Lemma}
\newtheorem{prop}[thm]{Proposition}
\newtheorem{cor}[thm]{Corollary}
\newtheorem{ques}[thm]{Question}
\newtheorem*{thm*}{Theorem}

\theoremstyle{definition}
\newtheorem{rem}[thm]{Remark}
\newtheorem{defn}[thm]{Definition}
\newtheorem{ex}[thm]{Example}

% mathbb and mathcal %

\newcommand{\cC}{\mathcal{C}}

\newcommand{\cG}{\mathcal{G}}

\newcommand{\cU}{\mathcal{U}}
\newcommand{\cV}{\mathcal{V}}
\newcommand{\cW}{\mathcal{W}}

\newcommand{\bN}{\mathbb{N}}

\newcommand{\bZ}{\mathbb{Z}}

\newcommand{\V}{\mathrm{V}}
\newcommand{\Homeo}{\mathrm{Homeo}}

\renewcommand{\geq}{\geqslant}
\renewcommand{\leq}{\leqslant}

\title{Embedding groups into acyclic groups}

\author{Martin Palmer}
\address{Institutul de Matematică Simion Stoilow al Academiei Române, 21 Calea Griviței, 010702 Bucharest, Romania}
\email{mpanghel@imar.ro}

\author{Xiaolei Wu}
\address{Shanghai Center for Mathematical Sciences, Jiangwan Campus, Fudan University, No.2005 Songhu Road, Shanghai, 200438, P.R. China}
\email{xiaoleiwu@fudan.edu.cn}

\subjclass[2020]{20J06, 22A22}
\keywords{Group homology, topological groupoids, acyclic groups, strongly torsion generated groups, centre of a group.}
\date{19 October 2025}

\begin{document}

\begin{abstract}
We show that labelled Thompson groups and twisted Brin--Thompson groups are all acyclic. This allows us to prove several new embedding results for groups. First, every group of type $F_n$ embeds quasi-isometrically as a subgroup of an acyclic group of type $F_n$ that has no proper finite-index subgroups. This improves results of Baumslag--Dyer--Heller ($n=1$) and Baumslag--Dyer--Miller ($n=2$) from the early 80s, as well as a more recent result of Bridson ($n=2$). Second, we show that every finitely generated group embeds quasi-isometrically as a subgroup of a $2$-generated, simple, acyclic group. Our results also allow us to produce, for each $n\geq 2$, the first known example of an acyclic group that is of type $F_n$ but not $F_{n+1}$. These examples can moreover be taken to be simple. Furthermore, our examples provide a rich source of universally boundedly acyclic groups.
\end{abstract}
\maketitle

\section*{Introduction}

Recall that a (discrete) group $G$ is called \emph{acyclic} if it has the homology of a point. There is long-standing interest in acyclic groups. Notably, Mather used a clever trick in \cite{Mather1971} to prove that the compactly-supported homeomorphism groups of finite-dimensional Euclidean spaces are acyclic. The idea of the proof leads to the definition of \emph{mitotic groups} \cite{BauDyHe80}, \emph{pseudo-mitotic groups} \cite{Varadarajan85} and \emph{binate groups} \cite{Berrick1989}, all of which are acyclic. De la Harpe and McDuff proved in \cite{dlHM83} that many automorphism groups of large structures are acyclic, including the permutation group of any infinite set.

A recent breakthrough of Szymik and Wahl \cite{SW19} shows that the Thompson group $\V$ is acyclic. Their proof involves heavy algebro-topological machinery including homological stability, stable homotopy theory and algebraic K-theory. Their results have subsequently been extended in \cite{Li2024, KLM+24, PalmerWu2024}, showing, for example, that the Brin--Thompson groups and many big mapping class groups are acyclic.

In a different direction, Kan and Thurston proved the following beautiful theorem in \cite{KanThurston76}: every connected CW-complex has the homology of a group. A key step in their proof of this theorem is (a simplicial version of) the following embedding result \cite[Proposition 3.3]{KanThurston76}: every group $G$ can be embedded into an acyclic group $C'G$ of the same cardinality (except when $G$ is finite, in which case $C'G$ is at most countable). This was later improved by Baumslag--Dyer--Heller, who proved in \cite{BauDyHe80} that every finitely generated group (and hence every countable group by \cite[Theorem IV]{HigmanNeumannNeumann49}) embeds into a finitely generated acyclic group. This was further extended by Baumslag--Dyer--Miller \cite{BauDyMi83}, who showed that every finitely presented group embeds into a finitely presented acyclic group.

Further results regarding embedding groups into acyclic groups were proven in \cite{BerrickMiller92, BerrickChatterjiMislin04, Berrick11}. We note that, in these embeddings, the host groups may be assumed, at most, to be finitely presented. It is worth mentioning that, in these papers, the authors also found applications to several old conjectures of embedding groups into acyclic groups, including resolving the Bass Conjecture for amenable groups \cite[Theorem 1.2]{BerrickChatterjiMislin04}. See also Chatterji--Mislin \cite{ChatterjiMislin03} for an another application, in their proof of Atiyah's $L^2$-index theorem.

These embedding results and applications give rise to the following question, which is the natural generalisation of this line of investigation.

\begin{ques}
\label{ques:acyclic-emb-F_n}
Does every group of type $F_n$ embed into an acyclic group of type $F_n$?
\end{ques}

Recall that a group $G$ is said to be of \emph{type $F_n$} (resp.\ of \emph{type $F$}) if there is an aspherical CW-complex with fundamental group $G$ and finite $n$-skeleton (resp.\ finitely many cells). It is of \emph{type $F_\infty$} if it is of type $F_n$ for each $n\geq 1$.

\subsection*{Labelled Thompson groups.}

The construction of Thompson's group $\V$ generalises to associate, to any discrete group $G$, a corresponding \emph{labelled Thompson group} $\V(G)$ together with a natural embedding $\iota_0 \colon G \hookrightarrow \V(G)$ (see \S\ref{subsection:label-thom} for the precise definition). This construction is natural in the sense that it forms a functor
\begin{equation*}
\V \colon \mathbf{groups} \longrightarrow \mathbf{groups}.
\end{equation*}
This embedding has already been well studied in \cite{Thompson80, WuWuZhaoZhou2025}, whose results we summarise here.

\begin{thm*}[\cite{Thompson80, WuWuZhaoZhou2025}]
\label{thm:labelled-V}
The functor $\V$ and the embedding $\iota_0$ have the following properties, for any discrete group $G$:
    \begin{enumerate}
        \item $\iota_0 \colon G \hookrightarrow \V(G)$ is injective, and is also a quasi-isometric embedding when $G$ is finitely generated;
        \item $\iota_0 \colon G \hookrightarrow \V(G)$ is Frattini;
        \item $\V(G)$ has solvable word problem if and only if $G$ has solvable word problem;
        \item $\V(G)$ is of type $F_n$ if and only if $G$ is of type $F_n$;
        \item $\V(G)$ has no proper finite-index subgroups;
        \item $\V(G)$ is $5$-uniformly perfect;
        \item $\V(G)$ is boundedly acyclic.
    \end{enumerate}
\end{thm*}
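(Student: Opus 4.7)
The plan is to treat the seven properties in roughly the order listed, following the approaches of Thompson and of Wu--Wu--Zhao--Zhou, and exploiting throughout the explicit description of $\V(G)$ as equivalence classes of triples $(T, \sigma, T')$ where $T, T'$ are finite rooted binary trees with the same number of leaves, $\sigma$ is a bijection between their leaf sets, and the leaves of (say) the codomain tree carry labels in $G$; the embedding $\iota_0$ sends $g$ to the triple consisting of trivial trees with a single leaf labelled $g$.

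First I would dispatch (1) and (3). Injectivity of $\iota_0$ follows from a canonical normal form for such triples (minimal tree representatives with absorbed $G$-labels): two triples with single-leaf trees are equivalent only when their $G$-labels coincide. The quasi-isometric statement uses a finite generating set of $\V(G)$ containing a fixed finite generating set of $G$ and verifies by direct inspection of the defining relations that any $\V(G)$-word representing an element of $\iota_0(G)$ can be reduced to canonical form at bounded cost per generator used. The normal form also yields an algorithm deciding triviality in $\V(G)$ whenever $G$ has solvable word problem, and the converse direction is immediate from (1).

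Properties (2), (5), (6) and (7) cluster around the self-similarity of $\V(G)$. For (6), an explicit $5$-commutator factorisation of a general element is obtained from a version of the classical Thompson commutator trick, exploiting the self-similarity embedding $\V(G) \times \V(G) \hookrightarrow \V(G)$ that acts on the two halves of the Cantor set. Uniform perfectness immediately implies (5), since any proper finite-index subgroup would yield a nontrivial homomorphism to a finite group, which must kill commutators. Property (2) follows because any element of $\iota_0(G)$ is conjugate to one supported on an arbitrarily deep dyadic subinterval, whence it can be absorbed into any prescribed maximal subgroup using the transitivity of the $\V$-action on dyadic intervals. For (7), I would verify that $\V(G)$ is pseudo-mitotic in the sense of Varadarajan: the doubling embedding above, together with a Thompson-style conjugator realising mitosis, provides the data required to apply the now-standard bounded acyclicity criterion developed in the work of Szymik--Wahl and its successors cited in the introduction.

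The technical heart of the theorem is (4). My plan would be to construct a contractible CW-complex on which $\V(G)$ acts, modelled on the Brown--Stein complex for $\V$ but with cells enriched by $G$-labels; the cell stabilisers then decompose as finite products of copies of $G$ and of labelled Thompson groups on fewer leaves. A spectral-sequence argument, together with the observation that the action has finitely many cells modulo $\V(G)$ in each dimension, shows that $\V(G)$ is of type $F_n$ whenever $G$ is; the converse follows because $G$ appears as a retract of a stabiliser and hence inherits type $F_n$ from $\V(G)$. I expect this to be the main obstacle, since the combinatorial bookkeeping must propagate the finiteness properties of $G$ cleanly through the labelled Thompson structure without damaging the high-connectivity arguments on which the Brown--Stein framework depends.
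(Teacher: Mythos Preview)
The paper does not prove this theorem at all: it is stated purely as a summary of results already established in \cite{Thompson80} and \cite{WuWuZhaoZhou2025}, and is used only as background for the paper's own new results (Theorem~\ref{thm:VG} and Proposition~\ref{aprop:VG-centre}). So there is no ``paper's own proof'' to compare your proposal against.

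That said, two steps in your sketch would not go through as written. Your deduction of (5) from (6) is incorrect: uniform perfectness only kills homomorphisms to \emph{abelian} groups, not to all finite groups, so it does not by itself rule out proper finite-index subgroups (there exist perfect groups with such subgroups). The argument in \cite{WuWuZhaoZhou2025} instead goes via the structure of normal subgroups of $\V(G)$ (cf.\ Theorem~\ref{thm:VG-normal-subgroup} here): any proper normal subgroup lies in $\ker(\pi\colon \V(G)\to \V)$, which has infinite index, so the normal core of a hypothetical proper finite-index subgroup gives a contradiction. For (7), asserting that $\V(G)$ is pseudo-mitotic is unjustified and almost certainly false: if even $\V = \V(\{e\})$ were pseudo-mitotic, its acyclicity would have been immediate rather than requiring the machinery of \cite{SW19}. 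The bounded acyclicity in \cite{WuWuZhaoZhou2025} is obtained via weaker ``commuting conjugates''-type criteria specific to bounded cohomology, not via pseudo-mitosis; you should invoke those instead.
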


The first goal of our paper is to add the following key property of $\V(G)$ to this list.

\begin{athm}[Theorem \ref{thm:laV-acyc}]
\label{thm:VG}
For any discrete group $G$, the labelled Thompson group $\V(G)$ is acyclic.
\end{athm}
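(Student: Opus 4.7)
The plan is to establish that $\V(G)$ is a \emph{binate group} in the sense of Berrick \cite{Berrick1989}, from which acyclicity follows by the well-known theorem that binate groups are acyclic. This would generalise the classical fact that Thompson's group $\V$ is pseudo-mitotic in the sense of Varadarajan \cite{Varadarajan85} (hence binate and acyclic), and fits into the framework of Mather-style infinite swindles \cite{Mather1971}.

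The key structural ingredient is the Cantor-set doubling inherited from $\V$. The Cantor set $C = \{0,1\}^{\bN}$ splits as $C_0 \sqcup C_1$, each half canonically homeomorphic to $C$; this yields two commuting injective endomorphisms $L, R \colon \V(G) \hookrightarrow \V(G)$ whose images act only on $C_0$, resp.\ $C_1$, and preserve $G$-labels. Given a finitely generated subgroup $H \leq \V(G)$, I would take a common subdivision tree for the generators of $H$; after a further subdivision, this produces a leaf-cell $c$ on which $H$ acts trivially. Iterating the doubling, one defines a homomorphism $\phi \colon H \to \V(G)$ embedding $H$ into $c$, and chooses a shift element $s \in \V \subseteq \V(G)$ (with trivial $G$-labels) acting as a Mather shift on a nested sequence $c = c_0 \supsetneq c_1 \supsetneq \cdots$ inside $c$. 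The Mather product $P(h) = \prod_{k \geq 0} s^k \phi(h) s^{-k}$ is then a well-defined element of $\V(G)$, since its factors have disjoint support in the cells $c_k \setminus c_{k+1}$, and the identity $s P(h) s^{-1} = P(h) \phi(h)^{-1}$ together with a standard rearrangement yields a binate-style identity of the form $h = [s', \phi(h)]$ for a suitable $s' \in \V(G)$ assembled from $s$, $P(h)$ and the doubling map.

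The main obstacle will be the bookkeeping of $G$-labels throughout the swindle. In the classical setting of $\V$ or $\Homeo_c(\bR^n)$, all leaves are unlabelled and the Mather identity is purely combinatorial; for $\V(G)$ each leaf carries a $G$-label that is transported along the action, so one must verify that the labels on both sides of the binate identity agree. Because the shift $s$ lies in the unlabelled subgroup $\V \subseteq \V(G)$, conjugation by $s$ preserves the labels of $\phi(h)$; combined with the fact that $\phi$ is label-preserving by construction, the label verification should reduce to a leaf-by-leaf check mirroring the combinatorial one. A further subtlety is to ensure that the ``spare'' cell $c$ is genuinely disjoint from $H$'s support at the labelled level, which may require a slightly more careful subdivision argument than in the unlabelled case.
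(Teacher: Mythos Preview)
Your approach has a fatal gap, and it is not where you think it is. The ``classical fact that Thompson's group $\V$ is pseudo-mitotic'' is not a fact at all: it is not known, and your argument would, if correct, already prove it for $G = \{e\}$. The acyclicity of $\V$ itself was a long-standing open problem, settled only by Szymik--Wahl \cite{SW19} using homological stability and stable homotopy theory; had a Mather swindle worked, this would have been known since the 1980s. The paper's proof proceeds instead by realising $\V(G)$ as the topological full group of the ample groupoid $\cV_2 \times G$, computing the groupoid homology to be zero via Matui's calculation for $\cV_2$ and the K\"unneth formula, and then invoking Xin Li's theorem \cite{Li2024} (itself a deep result descending from the Szymik--Wahl machinery) to transfer vanishing of groupoid homology to acyclicity of the full group.

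The precise failure in your argument is the claim that the Mather product $P(h) = \prod_{k \geq 0} s^k \phi(h) s^{-k}$ lies in $\V(G)$. Disjointness of supports ensures that $P(h)$ is a well-defined \emph{homeomorphism} of the Cantor set, but not that it lies in $\V(G)$: elements of $\V(G)$ are by definition represented by \emph{finite} $G$-tables. If $\phi(h)$ is non-trivial, then $P(h)$ acts non-trivially on each of the infinitely many disjoint cells $c_k \smallsetminus c_{k+1}$, and one checks (already in the unlabelled case $G=\{e\}$) that the resulting homeomorphism cannot be described by any finite prefix-substitution table. For a concrete instance, take $c_k = 1^k\{0,1\}^{\bN}$ and let $\phi(h)$ swap $00\{0,1\}^{\bN}$ with $01\{0,1\}^{\bN}$; then $P(h)$ is the map that locates the first~$0$ in an infinite word and flips the following bit, which requires reading unboundedly many symbols and hence lies outside~$\V$. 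Your identification of the $G$-label bookkeeping as the ``main obstacle'' is therefore misplaced: the obstruction is already present for $\V$ itself and is of an entirely different nature.
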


We also observe two additional properties of the functor $\V$. We recall that a group is called \emph{strongly torsion generated} if, for each $n\geq 2$, it is normally generated by a single element of order $n$.

\begin{aprop}[Propositions \ref{prop:centre-VG} and \ref{prop:torsion-VG}]
\label{aprop:VG-centre}
For any discrete group $G$, we have:
\begin{itemize}
    \item the centre of $\V(G)$ is isomorphic to the centre of $G$;
    \item the group $\V(G)$ is strongly torsion generated.
\end{itemize}
\end{aprop}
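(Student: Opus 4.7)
\textbf{Centre.} The first direction is a direct check: for $g \in Z(G)$ and any representative $h \in \V(G)$, expanding $\iota_0(g)$ along a common tree-refinement with $h$ produces an element whose leaf-labels are all equal to $g$, and commutation $\iota_0(g)h = h\iota_0(g)$ reduces to the identities $g g_i = g_i g$ at each leaf of $h$, which hold since $g$ is central. Conversely, let $z \in Z(\V(G))$. The forgetful homomorphism $\V(G) \twoheadrightarrow \V$ (erase all labels) carries $z$ into $Z(\V) = \{1\}$, so the tree-part of $z$ is trivial and some refinement represents $z$ as a tuple $(g_1,\ldots,g_n)$ of diagonal labels. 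Conjugation by the $\V$-element realising a leaf permutation $\sigma \in S_n$ permutes these labels; fixing $z$ for all $\sigma$ forces $g_1 = \cdots = g_n =: g$, so $z = \iota_0(g)$. Finally, commuting $\iota_0(g)$ with $\iota_0(h)$ for every $h \in G$ forces $g \in Z(G)$.

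\textbf{Strong torsion generation.} Fix $n \geq 2$. The group $\V$ is simple and contains elements of every finite order $\geq 2$ (for instance copies of $S_m$ acting on the leaves of a balanced binary tree), so any order-$n$ element $\tau \in \V$ normally generates $\V$. Viewing $\tau$ inside $\V(G)$ and setting $N = \langle\langle \tau \rangle\rangle \trianglelefteq \V(G)$, one immediately has $\V \subseteq N$. Since $\V(G)$ is generated by $\V$ together with the \emph{elementary labelled elements} $\alpha_{U,g}$ (label $g \in G$ on a dyadic cone $U$ of the Cantor set, trivial label elsewhere), it suffices to prove that every such $\alpha_{U,g}$ lies in $N$.

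I would exploit the self-similarity of the Cantor set via a mitotic-style doubling inside $\V$. Choose $s \in \V$ mapping $U$ onto a disjoint dyadic cone $s(U)$; then $s\alpha_{U,g}s^{-1} = \alpha_{s(U),g}$, which commutes with $\alpha_{U,g}$ by disjoint supports and whose product with it is $\alpha_{U \sqcup s(U),g}$. Next, pick $d \in \V$ carrying the cone $U$ homeomorphically onto $U \sqcup s(U)$ (such a $d$ exists by self-similarity); then
\[
d\,\alpha_{U,g}\,d^{-1} \;=\; \alpha_{U \sqcup s(U),\,g} \;=\; \alpha_{U,g}\cdot s\alpha_{U,g}s^{-1}.
\]
In the quotient $Q = \V(G)/N$ we have $\bar s = \bar d = 1$ since $s, d \in \V \subseteq N$, so this identity descends to $\overline{\alpha_{U,g}} = \overline{\alpha_{U,g}}^2$ in $Q$, forcing $\overline{\alpha_{U,g}} = 1$. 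Hence $\alpha_{U,g} \in N$ and $N = \V(G)$.

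I expect the main technical obstacle to be verifying the displayed mitotic identity cleanly within whatever normal-form and multiplication convention is adopted for $\V(G)$; once that bookkeeping is pinned down, both parts of the proposition follow by the arguments above.
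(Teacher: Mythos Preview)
Your argument is correct in both parts, but it diverges from the paper's route in an interesting way.

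\textbf{Centre.} The paper proves the reverse inclusion by invoking Theorem~\ref{thm:VG-normal-subgroup} (the Wu--Wu--Zhao--Zhou result that every proper normal subgroup of $\V(G)$ lies in $\ker\pi$), applied to the normal subgroup $Z(\V(G))$. You instead use the elementary fact that a surjection carries the centre into the centre, so $\pi(z)\in Z(\V)=\{1\}$; this is cleaner and avoids the external citation. One notational slip: the element with all labels equal to $g$ is $\iota_\varnothing(g)=[\{\varnothing\},((g),\mathrm{id}),\{\varnothing\}]$, not $\iota_0(g)$, which in the paper's conventions is $[\{0,1\},((g,e),\mathrm{id}),\{0,1\}]$. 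The substance is unaffected.

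\textbf{Strong torsion generation.} Here the contrast is sharper. The paper's proof is a one-liner: any order-$n$ element of $\V$, viewed in $\V(G)$, has normal closure not contained in $\ker\pi$, so by Theorem~\ref{thm:VG-normal-subgroup} it must be all of $\V(G)$. You instead give a self-contained mitotic argument showing directly that each elementary labelled element $\alpha_{U,g}$ lies in $N=\langle\langle\tau\rangle\rangle$. Your displayed identity $d\,\alpha_{U,g}\,d^{-1}=\alpha_{U,g}\cdot s\alpha_{U,g}s^{-1}$ does hold (conjugation by a trivially-labelled $\V$-element transports labels along the underlying homeomorphism), and the generation claim $\V(G)=\langle \V,\ \alpha_{U,g}\rangle$ follows by factoring any $G$-table as a permutation part times a diagonal label part. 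So your approach works, and in effect reproves the special case of Theorem~\ref{thm:VG-normal-subgroup} that the paper simply quotes. The trade-off: the paper's proof is shorter but relies on an imported structural theorem; yours is longer but internal to the paper.
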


These new properties allow us to deduce many new results related to acyclic groups, in particular answering Question \ref{ques:acyclic-emb-F_n} in the positive.

\begin{cor}
Any group of type $F_n$ embeds quasi-isometrically as a subgroup of an acyclic group of type $F_n$ that has no proper finite-index subgroups.
\end{cor}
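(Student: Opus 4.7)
The plan is to simply take $H = \V(G)$ and verify that this group satisfies all four required properties, by combining Theorem~A with the properties listed in the theorem of Thompson and Wu--Wu--Zhao--Zhou. So this corollary is essentially a bookkeeping statement once Theorem~A is established.

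In more detail: let $G$ be a group of type $F_n$ (where $n \geq 1$, so that in particular $G$ is finitely generated). First I would set $H := \V(G)$ and consider the natural embedding $\iota_0 \colon G \hookrightarrow H$. By item (1) of the theorem of Thompson and Wu--Wu--Zhao--Zhou, $\iota_0$ is injective and a quasi-isometric embedding (since $G$ is finitely generated). By item (4), $H$ is of type $F_n$. By item (5), $H$ has no proper finite-index subgroups. Finally, by Theorem~A, $H$ is acyclic. Thus $H$ has all four required properties, completing the proof.

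The only subtle point worth flagging is that ``of type $F_n$'' in the corollary implicitly requires $n \geq 1$, so that $G$ is at least finitely generated and the quasi-isometric embedding statement makes sense; in the edge case where one wished to include $n = 0$, the statement is vacuous or trivial. There is no main obstacle here — all the work has been done in proving Theorem~A and in the cited properties of the functor $\V$.
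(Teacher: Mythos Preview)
Your proposal is correct and matches the paper's intended argument exactly: the corollary is stated immediately after Theorem~A and Proposition~B as a direct consequence, with no separate proof given, precisely because it follows by combining Theorem~A with items (1), (4), and (5) of the cited theorem of Thompson and Wu--Wu--Zhao--Zhou.
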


\begin{rem}
Without the acyclicity property, this was first established by Bridson \cite{Bridson98}. Our embedding has the advantage that it is functorial and explicit. Recall also that the first acyclic group with no proper finite-index subgroups was constructed by Higman \cite{Higman51}. Further examples of the same flavour were produced by Bridson and Grunewald in \cite[\S 4]{BridsonGrunewald04}, which were used as an initial input in their solution to Grothendieck's problem on profinite rigidity. In contrast to our examples, these acyclic groups are of finite cohomological dimension (they have geometric dimension $2$). For more examples of acyclic groups of finite cohomological dimension, see \cite{BerrickHillman03}. 
\end{rem}

By Higman's celebrated embedding theorem \cite{Higman1961}, every recursively presented group embeds into a universal finitely presented group. Applying Theorem \ref{thm:VG} and Proposition \ref{aprop:VG-centre} to this group, we deduce the following corollary, which improves several results in the literature, including \cite[Theorem E]{BauDyMi83}, \cite[Theorem 6]{BerrickMiller92} and \cite[Theorem A]{Bridson20}.

\begin{cor}
There is a finitely presented, strongly torsion generated, acyclic group with no proper finite-index subgroups into which every recursively presented group embeds.
\end{cor}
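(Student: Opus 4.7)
The plan is to apply the functor $\V$ to a \emph{universal} finitely presented group and read off the required properties from the Theorem and Proposition just stated. Concretely, the strong form of Higman's embedding theorem \cite{Higman1961} provides a single finitely presented group $U$ such that every recursively presented group embeds into $U$. I would then set $H \coloneqq \V(U)$ and verify that $H$ has all five desired properties.

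The verification is a matter of ticking off items. First, $H$ is finitely presented by item (4) of Theorem \ref{thm:labelled-V}, applied with $n=2$ to the finitely presented group $U$. Next, $H$ is acyclic by Theorem \ref{thm:VG}, it is strongly torsion generated by Proposition \ref{aprop:VG-centre}, and it has no proper finite-index subgroups by item (5) of Theorem \ref{thm:labelled-V}. For the universal embedding property, given any recursively presented group $G$, composing the embedding $G \hookrightarrow U$ (from the choice of $U$) with $\iota_0 \colon U \hookrightarrow \V(U) = H$ (item (1) of Theorem \ref{thm:labelled-V}) yields the required embedding $G \hookrightarrow H$.

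There is in fact no substantial obstacle here: the corollary is a formal consequence of Theorem \ref{thm:VG} and Proposition \ref{aprop:VG-centre} once the universal form of Higman's theorem is invoked. The one point worth being careful about is that one must use the universal version (a single $U$ working for all recursively presented $G$ simultaneously) rather than the more commonly quoted pointwise form; this stronger version is however classical, and is already the form exploited by the predecessor results \cite{BauDyMi83, BerrickMiller92, Bridson20} that the corollary improves upon.
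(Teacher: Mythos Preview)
Your proposal is correct and follows essentially the same approach as the paper: take a universal finitely presented group $U$ from Higman's embedding theorem, set $H=\V(U)$, and read off each property from Theorem~\ref{thm:VG}, Proposition~\ref{aprop:VG-centre}, and the listed properties of the functor $\V$. Your explicit note about needing the \emph{universal} form of Higman's theorem is exactly the one subtlety, and you have handled it.
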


Given any abelian group $A$, Baumslag--Dyer--Heller constructed an acyclic group with centre isomorphic to $A$ \cite[Theorem 7.1]{BauDyHe80}. In the special case when $A = \bZ^n$ for any $n\geq 1$, they also constructed \emph{finitely generated} groups $G$ with centre $\bZ^n$ and $H_1(G)=H_2(G) = \{0\}$ \cite[Theorem 7.4]{BauDyHe80}. Applying Theorem \ref{thm:VG} and Proposition \ref{aprop:VG-centre} with $G=A$, we improve their results to the following.

\begin{cor}
Any abelian group $A$ is isomorphic to the centre of an acyclic, strongly torsion generated group $\Gamma$. When $A$ is finitely generated, $\Gamma$ can be taken to be of type $F_\infty$.
\end{cor}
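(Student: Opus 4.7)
The plan is to set $\Gamma \coloneqq \V(A)$ and then simply read off each required property from the results already established for the functor $\V$.

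First, since $A$ is abelian, its centre is $A$ itself. By the first bullet of Proposition~\ref{aprop:VG-centre} applied with $G=A$, the centre of $\V(A)$ is isomorphic to the centre of $A$, which is $A$. Theorem~\ref{thm:VG} then gives that $\V(A)$ is acyclic, and the second bullet of Proposition~\ref{aprop:VG-centre} gives that $\V(A)$ is strongly torsion generated. This already takes care of the first sentence of the corollary.

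For the second sentence, suppose that $A$ is finitely generated abelian, so $A \cong \bZ^r \oplus T$ for some $r\geq 0$ and some finite abelian group $T$. I would then argue that $A$ is of type $F_\infty$: the group $\bZ$ is of type $F$, any finite group admits a classifying space with finitely many cells in each dimension (for instance using an infinite-dimensional lens space in the cyclic case, and then a product / induction argument for the general finite abelian case), and the class of type-$F_\infty$ groups is closed under finite direct products. Property~(4) of the labelled Thompson functor, as recorded in the displayed theorem at the start of the labelled-Thompson subsection, then upgrades this to $\V(A)$ being of type $F_\infty$.

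There is essentially no substantive obstacle: every clause of the corollary is a direct application of a single quoted property of the functor $\V$ to the input $G=A$. The only ingredient not quoted verbatim from the excerpt is the standard and routine fact that every finitely generated abelian group is of type $F_\infty$, which is the one step that deserves a sentence of justification in the written proof.
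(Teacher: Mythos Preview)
Your proposal is correct and matches the paper's approach exactly: the paper introduces this corollary with the sentence ``Applying Theorem~\ref{thm:VG} and Proposition~\ref{aprop:VG-centre} with $G=A$, we improve their results to the following,'' which is precisely your argument of taking $\Gamma=\V(A)$. The $F_\infty$ clause likewise comes from property~(4) of the labelled-Thompson theorem together with the standard fact that finitely generated abelian groups are of type $F_\infty$, just as you say.
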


\begin{rem}
This also improves a result of Berrick \cite[Theorem A]{Berrick91}, who showed that every abelian group $A$ is isomorphic to the centre of a perfect, strongly torsion generated group $\Gamma$.
\end{rem}

More recently, Ould Houcine proved in \cite{Ould07} that every recursively presented abelian group $A$ may be realised as the centre of a finitely presented group $G$. By embedding $G$ further into $\V(G)$, we have the following result.

\begin{cor}
Every recursively presented abelian group $A$ may be realised as the centre of a finitely presented acyclic group.
\end{cor}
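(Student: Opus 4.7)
The plan is a short three-step composition that uses the functor $\V$ as the single essential tool. Start with a recursively presented abelian group $A$. By Ould Houcine's theorem \cite{Ould07}, there exists a finitely presented group $G$ with $Z(G) \cong A$. Form $\V(G)$; I claim this is the desired acyclic finitely presented group with centre $A$.

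To verify the claim, I would check the three required properties in turn. Finite presentability of $\V(G)$: since $G$ is finitely presented, i.e., of type $F_2$, property (4) of the summarising theorem for $\V$ transports this to $\V(G)$, which is therefore itself of type $F_2$ and hence finitely presented. Acyclicity of $\V(G)$: this is exactly the content of Theorem \ref{thm:VG}. Centre of $\V(G)$: Proposition \ref{aprop:VG-centre} (specifically the centre statement) yields $Z(\V(G)) \cong Z(G) \cong A$.

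There is essentially no obstacle once Theorem \ref{thm:VG} and Proposition \ref{aprop:VG-centre} are available; the corollary is a one-line diagram chase through the functor $\V$. The only point worth flagging is that this argument exploits the \emph{exact} preservation of the centre by $\V$ rather than a mere embedding: any construction that shrinks or enlarges the centre would break the conclusion, and standard Kan--Thurston-style embeddings into acyclic groups do not respect centres in this way. Conceptually, the new input from the present paper is precisely the centre computation in Proposition \ref{aprop:VG-centre}, which is what upgrades the older centre-realisation results (where the ambient group was not acyclic) to the acyclic setting stated in the corollary.
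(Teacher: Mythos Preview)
Your proof is correct and matches the paper's argument exactly: apply Ould Houcine's theorem to realise $A$ as the centre of a finitely presented group $G$, then pass to $\V(G)$ and invoke Theorem~\ref{thm:VG}, Proposition~\ref{aprop:VG-centre}, and property~(4) of the summarising theorem. There is nothing to add.
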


In a different direction, Collins and Miller \cite{CollinsMiller99} constructed a group with compact $2$-dimensional classifying space, hence of type $F$ (and thus also type $F_\infty$), that has unsolvable word problem. Taking the labelled group $G$ in Theorem \ref{thm:VG} to be their group, we have the following interesting application. 

\begin{cor}
There exists an acyclic group of type $F_\infty$ that has unsolvable word problem and no proper finite index subgroups.
\end{cor}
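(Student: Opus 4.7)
The plan is to obtain the desired group as $\V(G)$, where $G$ is the Collins--Miller group of \cite{CollinsMiller99}, and then to read off each of the four claimed properties (acyclic, type $F_\infty$, unsolvable word problem, no proper finite-index subgroups) directly from Theorem \ref{thm:VG} and from the properties of $\V$ summarised in the theorem at the start of the section on labelled Thompson groups.

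First I would recall that Collins and Miller constructed a group $G$ whose classifying space is a compact $2$-dimensional CW-complex; in particular $G$ is of type $F$, and so of type $F_n$ for every $n \geq 1$, i.e.\ of type $F_\infty$. Moreover, by construction, $G$ has unsolvable word problem. I then set $\Gamma := \V(G)$ and verify the four properties in turn: acyclicity of $\Gamma$ is exactly Theorem \ref{thm:VG} applied to $G$; the fact that $\Gamma$ is of type $F_n$ for every $n \geq 1$ (hence of type $F_\infty$) follows from property (4) of the theorem on labelled Thompson groups, since $G$ has this property; the unsolvability of the word problem in $\Gamma$ follows from property (3), since the word problem in $G$ is unsolvable; and the absence of proper finite-index subgroups in $\Gamma$ is exactly property (5), which holds for $\V(G)$ for any discrete $G$ whatsoever.

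There is essentially no obstacle here beyond invoking the right references: the work has all been done, either in the cited theorem of Collins--Miller or in Theorem \ref{thm:VG} together with the properties of the functor $\V$ listed above. The only point that deserves a line of comment is the implication "type $F$ $\Rightarrow$ type $F_\infty$", which is immediate from the definition (a compact classifying space has finite $n$-skeleton for every $n$). Thus the corollary reduces to a single sentence of the form: the group $\V(G)$, where $G$ is the Collins--Miller group, simultaneously satisfies all four required properties by Theorem \ref{thm:VG} together with items (3), (4) and (5) of the theorem of Thompson and Wu--Wu--Zhao--Zhou.
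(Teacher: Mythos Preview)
Your proposal is correct and matches the paper's own argument exactly: the paper also takes $G$ to be the Collins--Miller group and reads off the four properties of $\V(G)$ from Theorem~\ref{thm:VG} together with items (3), (4), (5) of the summarised theorem on labelled Thompson groups. There is nothing to add.
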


One way to show that a group $G$ is not of type $F_{n+1}$ is to show that $H_{n+1}(G;\bZ)$ is not a finitely generated abelian group. This is, for example, how Stallings \cite{Stallings63} found the first group of type $F_2$ but not $F_3$. This idea was later generalised by Bieri \cite[Proposition 4.1]{Bieri76} to find, for each $n\geq 1$, groups of type $F_n$ but not of type $F_{n+1}$. However, this approach clearly cannot be used if one is dealing with acyclic groups, and it is an interesting question to find acyclic groups with exotic finiteness properties. The \emph{Bestvina--Brady groups} \cite{BestvinaBrady97} provide a rich class of groups with exotic finiteness properties (indeed, in retrospect the groups constructed by Stallings and by Bieri are examples of Bestvina--Brady groups), but they are all locally indicable and hence cannot be acyclic. However, by taking $G$ to be any group of type $F_n$ but not $F_{n+1}$ in Theorem \ref{thm:VG}, we have the following.

\begin{cor}
\label{cor:acyc-Fn-nFn+1}
For any $n\geq 1$, there is an acyclic group of type $F_n$ but not $F_{n+1}$.
\end{cor}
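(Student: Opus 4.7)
The plan is to apply Theorem \ref{thm:VG} together with property (4) of the Thompson--Wu--Wu--Zhao--Zhou theorem to a carefully chosen input group $G$.

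First, I would invoke the existence of groups of type $F_n$ but not of type $F_{n+1}$, for each $n \geq 1$. For $n=1$ one can take any finitely generated but not finitely presented group (e.g.\ the lamplighter group $\bZ \wr \bZ$); for $n \geq 2$ one can take one of the Bieri--Stallings groups, or more generally a suitable Bestvina--Brady group associated with a flag complex of controlled connectivity. Let $G$ be such a group.

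Next, I would form the labelled Thompson group $\V(G)$. By Theorem \ref{thm:VG}, $\V(G)$ is acyclic. By part (4) of the theorem of Thompson and Wu--Wu--Zhao--Zhou recalled above, $\V(G)$ is of type $F_m$ if and only if $G$ is of type $F_m$, for every $m$. In particular, $\V(G)$ is of type $F_n$ but is not of type $F_{n+1}$, which proves the corollary.

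I expect no genuine obstacle here: the entire content is packaged into Theorem \ref{thm:VG} and the \textbf{if and only if} statement of property (4), and the only thing to do is to supply an input group with the desired finiteness properties, which is a classical matter. If anything, the step requiring the most care is ensuring that property (4) is stated as an equivalence (so that the non-finiteness of $G$ forces non-finiteness of $\V(G)$), but this is precisely what the cited theorem provides.
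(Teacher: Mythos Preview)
Your proposal is correct and matches the paper's own argument exactly: take any $G$ of type $F_n$ but not $F_{n+1}$, apply Theorem \ref{thm:VG} for acyclicity of $\V(G)$, and use the ``if and only if'' in property (4) to transfer the finiteness properties. The paper states this in a single sentence just before the corollary, citing the same sources (Stallings, Bieri, Bestvina--Brady) for the input groups.
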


In the case $n=1$, such a group may also be constructed by embedding any finitely generated but not recursively presented group into a finitely generated acyclic group using \cite{BauDyHe80}. However, to the best of our knowledge, for each $n \geq 2$ no groups with such properties were previously known. In the case $n=2$, it is likely that the universal finitely presented acyclic group arising from \cite[Theorem E]{BauDyMi83} is not of type $F_3$, but we are not aware of a proof.

\subsection*{Twisted Brin--Thompson groups.}

Our next goal is to prove that the twisted Brin--Thompson groups are also acyclic. These groups were first constructed by Belk and Zaremsky \cite{BelkZaremsky22} as a generalisation of Brin's higher-dimensional Thompson groups \cite{Brin04}. Given any group $G$ acting faithfully on a countable set $S$, they defined an associated \emph{twisted Brin--Thompson group}, denoted by $S\V_G$. The key property of the groups $S\V_G$ is that they are always simple \cite[Theorem 3.4]{BelkZaremsky22}. In fact, they provide a flexible way to produce simple groups with good finiteness properties.

\begin{athm}[Theorem \ref{thm:tbt-acyc}]
\label{thm:acyclic-t-br-thom}
The twisted Brin--Thompson groups are acyclic. 
\end{athm}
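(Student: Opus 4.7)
The plan is to adapt the strategy behind Theorem A (acyclicity of the labelled Thompson groups) to the higher-dimensional twisted setting. At a high level, the aim is to exhibit on $S\V_G$ a mitotic-type structure; acyclicity will then follow from the classical theorems of Baumslag--Dyer--Heller, Varadarajan, and Berrick that mitotic (and more generally binate) groups are acyclic.

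The main steps are as follows. First, realise $S\V_G$ as a group of piecewise $G$-twisted prefix-replacement homeomorphisms of the Cantor cube $\mathfrak{C}=\{0,1\}^S$ in the sense of Belk--Zaremsky. Second, for any finitely generated subgroup $H \leq S\V_G$, choose a cylinder subset $B \subset \mathfrak{C}$ containing the supports of all generators of $H$, and exploit the infinite capacity of $\mathfrak{C}$ to produce a commuting embedded copy $\phi(H) \leq S\V_G$ supported on a disjoint cylinder $B'$, where $\phi$ is induced by a $\mathrm{Stab}_G(s_0)$-equivariant isomorphism $B \to B'$ for some coordinate $s_0 \in S$. Third, construct a dissipator $t \in S\V_G$ such that $t h t^{-1} = h\cdot\phi(h)$ for every $h \in H$, by assembling Thompson-type moves on $B$, $B'$, and a third auxiliary cylinder. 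Finally, invoke the binate-implies-acyclic criterion to conclude. An initial reduction, via conjugation by an element of $S\V_G$, allows us to assume that the generators of $H$ are supported in a single cylinder, so the above procedure applies uniformly.

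The principal difficulty is that the $G$-action on $S$ need not admit any non-trivial $G$-invariant decomposition, so brick-splittings of $\mathfrak{C}$ can only be chosen equivariantly under proper subgroups of $G$ such as $\mathrm{Stab}_G(s_0)$. I expect to handle this by exploiting the local nature of the twisted Brin--Thompson construction: elements of $S\V_G$ are built from cylinder-supported moves composed with $G$-twists, so constructions equivariant under a stabiliser on one cylinder assemble canonically into well-defined global elements under $G$-translation. A complementary approach, suggested by the keyword \emph{topological groupoids}, would be to view $S\V_G$ as the full group of bisections of a suitable \'etale groupoid and apply the Szymik--Wahl-style machinery used in \cite{Li2024, KLM+24, PalmerWu2024} for related Thompson-like groups; this should in principle give a second, more uniform route to the result, albeit at the cost of invoking heavier algebro-topological input.
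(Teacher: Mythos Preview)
Your primary approach via a mitotic/binate structure has a genuine gap at its second step. You propose to confine the support of an arbitrary finitely generated subgroup $H \leq S\V_G$ to a proper cylinder $B \subsetneq \cC^S$, but this is impossible in general: elements of $S\V_G$ can have full support. Already in $\V$ (the case $\lvert S\rvert=1$, $G=\{e\}$) the element cyclically permuting the three bricks $0\cC$, $10\cC$, $11\cC$ has no fixed points, so its support is all of $\cC$; the evident embedding $\V \hookrightarrow S\V \hookrightarrow S\V_G$ acting on a single coordinate then produces elements of $S\V_G$ with no fixed points on $\cC^S$. Your ``initial reduction via conjugation'' cannot help, since conjugation preserves the property of having no fixed points. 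Without the ability to isolate $H$ on a proper brick there is no room for a disjoint commuting copy $\phi(H)$, and the dissipator construction collapses. This obstruction is exactly why the acyclicity of $\V$ itself was inaccessible to the Mather/Baumslag--Dyer--Heller/Berrick techniques and required the machinery of Szymik--Wahl \cite{SW19}; had a binate structure on $\V$ been available, that theorem would not have been a breakthrough.

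Your closing remark, by contrast, is the paper's actual route. The proof realises $S\V_G$ as the topological full group of an explicit ample groupoid $S\cV_2 \rtimes G$ (Proposition~\ref{prop:groupoid-full-grp-svg}), checks that this groupoid is purely infinite minimal with Cantor unit space (Lemmas~\ref{lem:svg-ample-eff} and~\ref{lem:svg-purely-inf-min}), shows that its groupoid homology vanishes in all degrees via K\"unneth and a spectral sequence reducing to Matui's computation $H_*(\cV_2)=0$ (Lemma~\ref{lem:groupoid-sv-acyclic}, Proposition~\ref{prop:groupoi-svg-cyclic}), and then invokes Li's theorem \cite{Li2024} (Theorem~\ref{thm:homgy-groupoid-fullgroup}) to pass from groupoid homology to homology of the full group. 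The groupoid-side computation is short; the deep input is entirely in that last step, which you correctly flagged as ``heavier'' but which cannot be circumvented by the mitotic argument you outlined.
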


This theorem also holds even if we do not assume that the underlying set $S$ is countable, as explained in Remark \ref{rmk:uncountable-S}. Combining Theorem \ref{thm:acyclic-t-br-thom} with \cite[Theorems A, B, 3.4]{BelkZaremsky22} and taking $S=G$ with the regular $G$-action, we have the following.

\begin{cor}
Every finitely generated group embeds quasi-isometrically into a $2$-generated, simple, acyclic group. 
\end{cor}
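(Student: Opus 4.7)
The plan is to assemble the corollary directly from Theorem~\ref{thm:acyclic-t-br-thom} together with the cited results of Belk--Zaremsky. The construction of the ambient group requires essentially no work: given a finitely generated group $G$, I take $S = G$ equipped with the regular (left-multiplication) action, which is faithful because left multiplication by a nontrivial element has no fixed points. This gives a permissible input to the twisted Brin--Thompson construction, producing the group $G\V_G$.

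Next, I would verify the four claimed properties one by one. Simplicity of $G\V_G$ is immediate from \cite[Theorem~3.4]{BelkZaremsky22}, which asserts that $S\V_G$ is simple for any faithful $G$-action on $S$. The $2$-generation statement follows from \cite[Theorem~A]{BelkZaremsky22}: with $S = G$ and the regular action, finite generation of $G$ upgrades to $2$-generation of $G\V_G$. For the quasi-isometric embedding, \cite[Theorem~B]{BelkZaremsky22} supplies a natural embedding $G \hookrightarrow G\V_G$ (viewing $G$ as the stabiliser subgroup of the trivial tree, or equivalently through the pointed-embedding of $G$ into $G\V_G$), and guarantees that this embedding is a quasi-isometric embedding when $G$ is finitely generated. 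Finally, acyclicity is supplied by our Theorem~\ref{thm:acyclic-t-br-thom}.

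Combining these four ingredients yields the statement. There is essentially no obstacle here: the work has already been done in proving Theorem~\ref{thm:acyclic-t-br-thom}, and the remaining properties are imported wholesale from \cite{BelkZaremsky22}. The only point to be attentive to is that all of the simplicity, $2$-generation and quasi-isometric embedding results of Belk--Zaremsky apply to the same group $G\V_G$ with the same choice $S=G$ and regular action, so that they can be invoked simultaneously.
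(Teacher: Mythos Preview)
Your proposal is correct and matches the paper's own argument exactly: take $S=G$ with the regular action, invoke \cite[Theorems~A, B, 3.4]{BelkZaremsky22} for $2$-generation, the quasi-isometric embedding, and simplicity, and apply Theorem~\ref{thm:acyclic-t-br-thom} for acyclicity. There is nothing to add or correct.
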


Taking $G$ to be the $n$th Houghton group acting on the disjoint union of $n$ copies of the natural numbers $S = \{1,\ldots,n\} \times \bN$, the corresponding twisted Brin--Thompson group $S\V_G$ is of type $F_{n-1}$ but not $F_n$ \cite[Corollary G]{BelkZaremsky22}. By Theorem \ref{thm:acyclic-t-br-thom} and \cite[Theorem 3.4]{BelkZaremsky22}, we therefore have the following.

\begin{cor}\label{cor:acyc-Fn-nFn+1-simple}
For any $n\geq 1$, there is a simple, acyclic group of type $F_n$ but not $F_{n+1}$.
\end{cor}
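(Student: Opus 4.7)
The plan is to assemble this statement by invoking the three main ingredients already available: the acyclicity theorem just established (Theorem \ref{thm:acyclic-t-br-thom}), together with two results of Belk and Zaremsky on simplicity and finiteness properties of twisted Brin--Thompson groups. The paragraph immediately preceding the corollary already essentially carries out this construction for $F_{n-1}$ but not $F_n$, so the only non-trivial issue is to reindex correctly.

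Explicitly, I would take $G$ to be the $(n+1)$st Houghton group $H_{n+1}$, acting in the standard way on the countable set $S = \{1,\ldots,n+1\}\times \bN$ by translating along rays. This action is faithful, so the associated twisted Brin--Thompson group $\Gamma = S\V_{H_{n+1}}$ is defined. By \cite[Theorem 3.4]{BelkZaremsky22}, $\Gamma$ is simple; by \cite[Corollary G]{BelkZaremsky22}, $\Gamma$ is of type $F_n$ but not of type $F_{n+1}$; and by Theorem \ref{thm:acyclic-t-br-thom}, $\Gamma$ is acyclic. Hence $\Gamma$ has all the required properties.

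There is really no obstacle here beyond correct bookkeeping, since all three properties are then simultaneous consequences of the cited results. The substantive content of the corollary is entirely absorbed into Theorem \ref{thm:acyclic-t-br-thom}, which is where the actual work lives; the Houghton group input is used only as a convenient source of examples with prescribed finiteness length, and the simplicity and acyclicity are automatic from the twisted Brin--Thompson formalism. One small sanity check worth performing is that the case $n=1$ is covered, which is fine: $H_2$ acts faithfully on two copies of $\bN$, and $S\V_{H_2}$ is a simple acyclic group of type $F_1$ but not $F_2$.
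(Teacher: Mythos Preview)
Your proposal is correct and matches the paper's argument exactly: the paper likewise takes the Houghton group acting on its standard ray set, cites \cite[Corollary G]{BelkZaremsky22} for the finiteness properties, \cite[Theorem 3.4]{BelkZaremsky22} for simplicity, and Theorem \ref{thm:acyclic-t-br-thom} for acyclicity. Your reindexing from $H_n$ (giving type $F_{n-1}$ but not $F_n$) to $H_{n+1}$ (giving type $F_n$ but not $F_{n+1}$) is precisely what is needed to align with the corollary's statement.
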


\begin{rem}
The first family of simple groups of type $F_n$ but not $F_{n+1}$ was constructed by Skipper, Witzel and Zaremsky \cite[Theorem 7.1]{SkipperWitzelZaremsky19} using the R\"over--Nekrashevych groups $\V_d(G)$ associated to certain self-similar subgroups $G$ of the automorphism group of the rooted $d$-ary tree (for $d \geq 4$). It is not clear whether these groups could also be acyclic, as the Higman--Thompson group $\V_d$ (as well as its index-$2$ subgroup when $d$ is odd) fails to be acyclic as soon as $d\geq 3$ \cite[\S 6]{SW19}. See also \cite{MillerSteinberg24} for more calculations of the homology of Röver--Nekrashevych groups.
\end{rem}

\begin{rem}
Recall that a group is called is \emph{universally boundedly acyclic} if it is boundedly acyclic over any complete valued field; see \cite[\S 5]{FFLM23} for more information. By \cite[Theorem 5.2]{FFLM23}, a group is universally boundedly acyclic if and only if it is boundedly acyclic over $\mathbb{R}$ and acyclic (over $\mathbb{Z}$). To the best of our knowledge, the only groups that were previously known to be universally boundedly acyclic were binate groups \cite[Theorem 1.4]{FFLM23} and Thompson's group $\V$ (by combining \cite{Konstantin22} and \cite{SW19}). Now, by Theorems \ref{thm:VG} and \ref{thm:acyclic-t-br-thom} combined with \cite[Theorems 0.1 and 0.11]{WuWuZhaoZhou2025}, we know that all labelled Thompson groups and all twisted Brin--Thompon groups are universally boundedly acyclic.
\end{rem}

\subsection*{Strategy and outline.}

Our proof of acyclicity both for labelled Thompson groups (Theorem \ref{thm:VG}) and for twisted Brin--Thompson groups (Theorem \ref{thm:acyclic-t-br-thom}) uses the framework of topological groupoids. We first build topological groupoids whose topological full groups are isomorphic to the groups in question, then apply Xin Li's results \cite{Li2024} to reduce the proof to calculating the homology of topological groupoids, which turns out to be much easier to handle. In \S\ref{section:top-groupoids} we review the basic properties of and results about topological groupoids that we need. We then prove our results in \S\ref{section:proofs-of-theorems}: Theorem \ref{thm:VG} in \S\ref{subsection:label-thom} (see Theorem \ref{thm:laV-acyc}) as well as Proposition \ref{aprop:VG-centre} (see Propositions \ref{prop:centre-VG} and \ref{prop:torsion-VG}), and Theorem \ref{thm:acyclic-t-br-thom} in \S\ref{subsection:twisted-BT} (see Theorem \ref{thm:tbt-acyc}).

In future work, we plan to adapt the methods of \cite{SW19}, rather than using topological groupoids, to calculate the homology of generalised Röver--Nekrashevych groups, including labelled Higman--Thompson groups $\V_d(G)$ for any $d\geq 2$.

\subsection*{Acknowledgments.}
MP was partially supported by grants of the Romanian Ministry of Research, Innovation and Digitization, CNCS - UEFISCDI, project numbers PN-IV-P1-PCE-2023-2001 and PN-IV-P2-2.1-TE-2023-2040, within PNCDI IV.

XW is currently a member of LMNS and is supported by NSFC No.12326601. He thanks Xin Li and Jianchao Wu for several inspiring discussions on topological groupoids. He thanks Matthew Zaremsky for asking the question of whether there are acyclic groups of type $F_n$ but not $F_{n+1}$ during the conference ``Topological and Homological Methods in Group Theory'' at Bielefeld in 2024. By either Corollary \ref{cor:acyc-Fn-nFn+1} or Corollary \ref{cor:acyc-Fn-nFn+1-simple}, the answer to his question is yes.

We thank Xin Li and Hiroki Matui for two wonderful lecture series on topological groupoids from which we benefited a lot. We also thank Jon Berrick, Francesco Fournier-Facio and Matthew Zaremsky for several helpful comments on a preliminary version of our paper.

\section{A quick introduction to topological groupoids}
\label{section:top-groupoids}

In this section we give a quick introduction to topological groupoids and their corresponding topological full groups; see \cite{Matui15,Renault80} for more information.

A groupoid is a small category whose morphisms are all invertible. As usual, we identify the groupoid with its set of morphisms, denoted by $\mathcal{G}$, and view its set of objects (also called units) $\mathcal{G}^{(0)}$ as a subset of $\mathcal{G}$ by identifying objects with the corresponding identity morphisms. By definition, a groupoid $\mathcal{G}$ comes equipped with range and source maps $r \colon \mathcal{G} \to \mathcal{G}^{(0)}$, $s \colon \mathcal{G} \to \mathcal{G}^{(0)}$, a multiplication map
\[
\mathcal{G} \tensor[_s]{\times}{_r} \mathcal{G} = \{ (g_1,g_2)\mid s(g_1) = r(g_2)\} \longrightarrow \mathcal{G}, \qquad (g_1,g_2) \longmapsto g_1g_2
\]
and an inversion map $g \mapsto g^{-1} \colon \cG \to \cG$ satisfying $r(g^{-1}) = s(g), s(g^{-1}) =r(g), gg^{-1} = r(g)$ and $g^{-1} g = s(g)$. These structure maps must satisfy the usual list of axioms so that $G$ is a small category; see \cite[Section 1.1]{Renault80} for more details.

\begin{defn}[Topological groupoids]
\label{def:top-gropuoid}
A \emph{topological groupoid} is a groupoid equipped with a topology making the composition and inversion maps continuous. In addition, we assume that the topology on $\cG$ makes its subspace $\cG^{(0)}$ (which we call the \emph{unit space}) locally compact and Hausdorff.
\end{defn}

We note that the topological groupoid $\cG$ itself need not be Hausdorff; only its subspace $\cG^{(0)}$ must be Hausdorff. However, in this paper we will only need to consider Hausdorff topological groupoids.

\begin{defn}[{\'E}tale groupoids]
A topological groupoid $\mathcal{G}$ is called \emph{étale} if the range and source maps are local homeomorphisms.
\end{defn}

We note that the definition implies that the unit space $\cG^{(0)}$ of an étale groupoid is an open subspace of the whole space $\cG$.

\begin{defn}
An open subspace $U \subseteq \mathcal{G}$ is called an \emph{open bisection} if the restricted range and source maps $r|_U \colon U\to r(U)$ and $s|_U \colon U\to s(U)$ are bijections (and hence homeomorphisms if $\cG$ is étale).
\end{defn}

If the groupoid $\mathcal{G}$ is étale, then $\mathcal{G}$ has a basis for its topology consisting of open bisections. We note that open bisections are always locally compact and Hausdorff because they are homeomorphic to open subspaces of the unit space.

\begin{defn}[Ample groupoids]
\label{def:ample}
A topological groupoid $\mathcal{G}$ is called \emph{ample} if it is étale and its unit space $\mathcal{G}^{(0)}$ is totally disconnected.
\end{defn}

Equivalently, an étale groupoid $\cG$ is ample if and only if it admits a basis for its topology consisting of compact open bisections. We also note that if $\cG$ is ample, then by definition its unit space $\mathcal{G}^{(0)}$ is assumed to be totally disconnected and locally compact, which implies that it admits a basis of compact open subsets.

Finally, we consider two notions of minimality for topological groupoids.

\begin{defn}
A topological groupoid is called \emph{minimal} if for each $x \in \mathcal{G}^{(0)}$, the corresponding orbit $\mathcal{G} \cdot x := \{r(g) \mid g\in s^{-1}(x)\}$ is dense in $\mathcal{G}^{(0)}$.
\end{defn}

\begin{defn}
\label{def:purely-inf-minimal}
An ample groupoid $\mathcal{G}$ is called \emph{purely infinite minimal} if for all compact open subspaces $U,V \subseteq \mathcal{G}^{(0)}$ with $V \neq \emptyset$, there exists a compact open bisection $\sigma \subseteq \mathcal{G}$ such that $s(\sigma) = U$ and $r(\sigma) \subseteq V$.
\end{defn}

As the terminology suggests, purely infinite minimal implies minimal.

\begin{lem}
An ample, purely infinite minimal groupoid is minimal.
\end{lem}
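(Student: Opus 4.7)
The plan is to unpack the two definitions directly. To establish minimality, I will show that for any unit $x \in \cG^{(0)}$ and any nonempty open subset $V \subseteq \cG^{(0)}$, the orbit $\cG \cdot x$ meets $V$. The purely infinite minimal hypothesis already almost says this, but it is phrased in terms of compact open bisections between compact open subsets of $\cG^{(0)}$, so there is a small translation step required before it can be applied.

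First I would exploit ampleness to reduce to the case where the target is a compact open set: since $\cG^{(0)}$ is totally disconnected, locally compact and Hausdorff, it admits a basis of compact open subsets, so I may shrink $V$ to a nonempty compact open subset $V' \subseteq V$ without loss of generality. For the same reason, I may choose a compact open neighbourhood $U$ of $x$. Applying Definition \ref{def:purely-inf-minimal} to the pair $(U, V')$, I obtain a compact open bisection $\sigma \subseteq \cG$ with $s(\sigma) = U$ and $r(\sigma) \subseteq V'$. Because $s$ restricts to a bijection $\sigma \to U$ and $x \in U$, there is a unique $g \in \sigma$ with $s(g) = x$, and then $r(g) \in r(\sigma) \subseteq V' \subseteq V$, producing the desired element of $\cG \cdot x \cap V$.

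There is no real obstacle in this argument; the content is entirely in chasing the definitions. The only points that require any care are the two reductions to compact open sets (which rely on ampleness of $\cG$ and local compactness of $\cG^{(0)}$) and the observation that the hypothesis $V \neq \emptyset$ in Definition \ref{def:purely-inf-minimal} is automatic here since we start with a nonempty open $V$ and shrink within it.
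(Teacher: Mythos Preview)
Your proof is correct and follows essentially the same argument as the paper: shrink $V$ to a compact open subset, choose a compact open neighbourhood $U$ of $x$, apply the purely infinite minimal hypothesis to obtain a bisection $\sigma$ with $s(\sigma)=U$ and $r(\sigma)\subseteq V$, and pick the element of $\sigma$ over $x$. The only cosmetic difference is that you note the uniqueness of $g\in\sigma$ with $s(g)=x$, which the paper does not bother to mention.
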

\begin{proof}
Denote the groupoid in question by $\cG$. Let $x \in \cG^{(0)}$ and let $V$ be a non-empty open subspace of $\cG^{(0)}$. We must show that $V \cap (\cG \cdot x) \neq \varnothing$, in other words that there exists $g \in \cG$ such that $s(g) = x$ and $r(g) \in V$. Since $\cG$ is ample, its unit space $\cG^{(0)}$ admits a basis of compact open subsets, as noted above, so we may assume that $V$ is compact, and we may also choose a compact open neighbourhood $U$ of $x \in \cG^{(0)}$. Applying Definition \ref{def:purely-inf-minimal}, we find a compact open bisection $\sigma \subseteq \cG$ with $s(\sigma) = U$ and $r(\sigma) \subseteq V$. Since $x \in U$, we may therefore find some $g \in \sigma$ with $s(g) = x$ and $r(g) \in V$, as required.
\end{proof}

\subsection{Examples.}

We now consider some important examples of topological groupoids. We first note that ample groupoids generalise discrete groups.

\begin{ex}
\label{ex:discrete-groups}
Every discrete group $G$, considered as a topological groupoid with a unique object and morphism space $G$, is an ample groupoid.
\end{ex}

The following groupoid will play a key role in our study.

\begin{ex}
\label{ex:SFT}
Consider $\{0,1\}^{\bN}$, the set of infinite sequences in $0,1$, equipped with the product topology. Note that $\{0,1\}^{\bN}$ is homeomorphic to the Cantor space; we will refer to it as the \emph{standard Cantor space} (often denoted by $\cC$). Define the one-sided shift $\rho \colon \{0,1\}^{\bN} \to \{0,1\}^{\bN}$ by sending $x_0x_1x_2\cdots$ to $x_1x_2\cdots$. The groupoid attached to this shift of finite type is given by 
\[
\cV_2 := \{ (y,n,x) \in \{0,1\}^{\bN} \times \bZ \times \{0,1\}^{\bN} \mid \exists~ l,m \in \bZ \text{ with } l,m\geq 0 \text{ such that } n=l-m \text{ and } \rho^l(x) = \rho^m(y) \}
\]

The topology of $\cV_2$ is generated by the collection of all sets of the form
\begin{equation}
\label{eq:basic-open-set-of-V2}
\{ (y,l-m,x)\in \cV_2 \mid x\in U, y\in V, \rho^l(x) = \rho^m(y)\}
\end{equation}
for fixed $l,m\in \bZ$ with $l,m\geq 0$, and $U,V$ open subspaces of $\{0,1\}^{\bN}$ such that $\rho^l$ and $\rho^m$ restrict to homeomorphisms
\[
V \xrightarrow[\;\cong\;]{\rho^l} \rho^l(V) = \rho^m(U) \xleftarrow[\;\cong\;]{\rho^m} U.
\]
We note for future reference that the underlying space of the topological groupoid $\cV_2$ splits as a disjoint union, indexed by $i \in \bZ$, of the open subspaces
\[
\cV_2[i] := \{ (y,n,x) \in \cV_2 \mid n=i \}.
\]

The unit space of $\cV_2$ is given by $\cV_2^{(0)} = \{(x,0,x) \in \cV_2 \mid x \in \{0,1\}^{\bN}\}$, which is the diagonal subspace of the product $\{0,1\}^{\bN} \times \{0,1\}^{\bN}$, hence canonically homeomorphic to $\{0,1\}^{\bN}$. The source and range maps are given by $s((y,n,x)) = x$, $r((y,n,x)) = y$, the multiplication is given by $(z,n',y)(y,n,x) = (z,n+n',x)$ and the inversion is given by $(y,n,x)^{-1} = (x,-n,y)$.

Note that the basic open set \eqref{eq:basic-open-set-of-V2} is naturally in bijection with $U$, and with $V$, under the source and range maps respectively. From this observation, it is clear that the source and range maps of $\cV_2$ are local homeomorphisms. Since the Cantor space $\cV_2^{(0)} \cong \{0,1\}^\bN$ is totally disconnected, the groupoid $\cV_2$ is ample.
\end{ex}

\begin{ex}
\label{ex:product}
Given any two topological groupoids $\cG_1,\cG_2$, their product $\cG_1\times \cG_2$ naturally forms a topological groupoid, with the topology of $\cG_1\times \cG_2$ given by the product topology. The unit space $(\cG_1\times \cG_2)^{(0)}$ is simply the product of the two unit spaces $\cG_1 ^{(0)}\times \cG_2^{(0)}$. If $\cG_1$ and $\cG_2$ are both étale (resp.\ ample), then $\cG_1\times \cG_2$ is also étale (resp.\ ample).
\end{ex}

\begin{ex}[{cf.\ \cite[Definition I.1.7]{Renault80}}]
\label{ex:semi-direct-product}
Let $\cG$ be a topological groupoid and let $H$ be a discrete group acting on $\cG$ from the left via a homomorphism $f \colon H \to \mathrm{Aut}(\cG)$. We can then form the semi-direct product $\cG \rtimes_f H$ on the product space $\cG \times H$ with the following groupoid structure: $(\gamma, h) $ and $ (\gamma', h')$ are composable if and only if $\gamma$ and $f(h)(\gamma')$ are composable, in which case the multiplication is given by
\[
(\gamma, h) \cdot (\gamma', h') = (\gamma \cdot f(h)(\gamma'),hh').
\]
Inversion is given by
\[
(\gamma,h)^{-1} = (f(h^{-1})(\gamma^{-1}),h^{-1})
\]
and the range and source maps are given by 
\[
r((\gamma,h)) = (r(\gamma),e) \quad\text{and}\quad s((\gamma,h)) = (f(h^{-1})(s(\gamma)),e),
\]
where $e$ denotes the identity element of $H$. Note that the unit space of $\cG \rtimes_f H$ is the subspace $\cG^{(0)} \times \{e\}$ of the product space $\cG \times H$, so it is homeomorphic to $\cG^{(0)}$; in particular it is locally compact and Hausdorff. Moreover, it follows directly from the definitions that if $\cG$ is an étale (resp.\ ample) groupoid, then $\cG \rtimes_f H$ is also étale (resp.\ ample).

Finally, we note that there is a natural homomorphism $\tilde{f} \colon \cG \rtimes_f H \to H$ given by $\tilde{f}(\gamma, h) = h$.
\end{ex}

\subsection{Topological full groups.}

We now consider the topological full group of a topological groupoid. To avoid technicalities that will not be relevant for this paper, we will restrict ourselves to ample groupoids with compact unit space.

\begin{defn}[Topological full group]
Let $\mathcal{G}$ be an ample groupoid with compact unit space $\mathcal{G}^{(0)}$. Its \emph{topological full group} $F(\mathcal{G})$ is the group of compact open bisections $\sigma \subseteq \mathcal{G}$ with $r(\sigma) = \mathcal{G}^{(0)} = s(\sigma)$. Multiplication in $F(\mathcal{G})$ is given by multiplication of bisections, i.e.\ $\sigma \tau = \{gh \mid g\in \sigma, h\in \tau, s(g) = r(h)\}$, the identity element is $\sigma = \mathcal{G}^{(0)}$ and inversion is given by $\sigma^{-1} = \{ g^{-1} \mid g \in \sigma \}$.
\end{defn}

\begin{defn}
\label{def:effective}
An ample groupoid $\cG$ with compact unit space is called \emph{effective} if the interior of the isotropy subgroupoid $\{g \in \cG\mid r(g) = s(g)\}$ coincides with $\cG^{(0)}$.
\end{defn}

\begin{rem}
Note that the interior of the isotropy subgroupoid always contains the unit space $\cG^{(0)}$; the effectivity condition says that there does not exist any strictly larger open subspace of $\cG$ inside the isotropy subgroupoid.
\end{rem}

If $\cG$ is an effective, ample, Hausdorff groupoid, then the homomorphism $F(\cG) \to \mathrm{Homeo}(\cG^{(0)})$ sending $\sigma \in F(\cG)$ to the homeomorphism $\cG^{(0)} \to \cG^{(0)}$ given by $s(g) \mapsto r(g)$ for $g \in \sigma$ is injective (see for example \cite[Lemma 3.1]{NylandOrtega2019}), so that we may view $F(\cG)$ naturally as a subgroup of $\mathrm{Homeo}(\cG^{(0)})$. We record this as a lemma for future use.

\begin{lem}
\label{lem:full-group-faithful}
If $\cG$ is an effective, ample, Hausdorff groupoid, then $F(\cG)$ is naturally isomorphic to a subgroup of $\mathrm{Homeo}(G^{(0)})$ via the embedding taking $\sigma$ to $r_U \circ (s|_U)^{-1}$.
\end{lem}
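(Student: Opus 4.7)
The plan is to verify the three things that make the assignment $\Phi\colon F(\cG)\to \Homeo(\cG^{(0)})$, $\sigma\mapsto (r|_\sigma)\circ (s|_\sigma)^{-1}$, an injective group homomorphism: (a) each $\Phi(\sigma)$ is actually a self-homeomorphism of $\cG^{(0)}$; (b) $\Phi$ respects composition; (c) $\Phi$ has trivial kernel. Of these, the first two are essentially formal and the third is where the effectivity hypothesis is used.

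For (a), I would note that by definition of $F(\cG)$ the bisection $\sigma$ satisfies $r(\sigma)=s(\sigma)=\cG^{(0)}$, and since $\cG$ is étale the restrictions $r|_\sigma$ and $s|_\sigma$ are continuous bijections $\sigma\to\cG^{(0)}$ that are local homeomorphisms on their domains, hence homeomorphisms. Thus $\Phi(\sigma)\in \Homeo(\cG^{(0)})$. For (b), given $\sigma,\tau\in F(\cG)$ and $x\in\cG^{(0)}$, pick the unique $h\in\tau$ with $s(h)=x$ and the unique $g\in\sigma$ with $s(g)=r(h)=\Phi(\tau)(x)$. Then $gh\in\sigma\tau$ with $s(gh)=x$ and $r(gh)=r(g)=\Phi(\sigma)(\Phi(\tau)(x))$, so $\Phi(\sigma\tau)=\Phi(\sigma)\Phi(\tau)$. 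The identity element $\cG^{(0)}\in F(\cG)$ clearly goes to the identity homeomorphism.

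For (c), suppose $\Phi(\sigma)=\mathrm{id}_{\cG^{(0)}}$. Then for every $g\in\sigma$ we have $r(g)=s(g)$, so $\sigma$ is entirely contained in the isotropy subgroupoid $\mathrm{Iso}(\cG):=\{g\in\cG\mid r(g)=s(g)\}$. Since $\cG$ is étale, $\sigma$ is open in $\cG$, and therefore $\sigma$ lies in the interior of $\mathrm{Iso}(\cG)$. By the effectivity assumption (Definition \ref{def:effective}), this interior equals $\cG^{(0)}$, so $\sigma\subseteq \cG^{(0)}$. But then $s|_\sigma$ is just the inclusion into $\cG^{(0)}$, and since $s(\sigma)=\cG^{(0)}$, we conclude $\sigma=\cG^{(0)}$, i.e.\ $\sigma$ is the identity of $F(\cG)$.

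The main (and only nontrivial) obstacle is the injectivity step, which truly requires effectivity: without it, there could be open bisections $\sigma\subsetneq \cG^{(0)}\cup\mathrm{Iso}(\cG)^\circ$ that act trivially on $\cG^{(0)}$ yet are not equal to $\cG^{(0)}$. The Hausdorff hypothesis enters implicitly to guarantee that bisections behave well (e.g.\ that the natural identifications of $\sigma$ with its image under $s$ and $r$ are honest homeomorphisms of Hausdorff spaces) and is standard in this setting, as indicated by the reference to \cite[Lemma 3.1]{NylandOrtega2019}.
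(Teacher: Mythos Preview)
Your proof is correct and complete: parts (a) and (b) are the standard formal verifications, and part (c) correctly isolates effectivity as the key ingredient for injectivity (an open bisection in the kernel lies in the isotropy subgroupoid, hence in its interior, hence in $\cG^{(0)}$ by effectivity, hence equals $\cG^{(0)}$ since $s(\sigma)=\cG^{(0)}$).

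There is nothing to compare against, however: the paper does not prove this lemma at all. The sentence immediately preceding the lemma simply asserts the injectivity, cites \cite[Lemma~3.1]{NylandOrtega2019} for it, and then records the statement ``for future use.'' Your argument is essentially the standard one that the cited reference would contain, so you have supplied what the paper deferred to the literature.
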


It is well-known that the topological full group $F(\cV_2)$ of the topological groupoid $\cV_2$ is the Thompson group $\V$. For completeness, we give a detailed proof of this in Lemma \ref{lem:groupdV2-full-V}, after properly introducing the Thompson group $\V$.

\subsection{Homology of topological full groups.}

Recall that the main task of this paper is to calculate the homology of some Thompson-like groups by viewing them as topological full groups. There is a homology theory for étale groupoids: see for example \cite{CrainicMoerdijk00} or \cite[\S 3]{Matui2012} for details. It turns out that the homology of étale groupoids is often much easier to calculate than the homology of their topological full groups. The following theorem of Xin Li allows us to exploit this by connecting the homology of a topological groupoid with the homology of its topological full group.

\begin{thm}[{\cite[Corollary D]{Li2024}}]
\label{thm:homgy-groupoid-fullgroup}
Let $\cG$ be an ample groupoid that is purely infinite minimal and whose unit space does not have isolated points.
Fix $k\in \bZ$ with $k > 0$. If $H_\ast(\cG) = \{0\}$ for all $\ast<k$, then $H_\ast(F(\cG)) = \{0\}$ for all $0<\ast<k$ and $H_k(F(\cG)) \cong H_k(\cG)$. In particular, if $H_\ast(\cG) = \{0\}$ for all $\ast \geq 0$, then $F(\cG)$ is acyclic.
\end{thm}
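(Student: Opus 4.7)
The plan is to follow a chain-level comparison strategy, connecting the group homology of the topological full group $F(\cG)$ to the groupoid homology $H_*(\cG)$. For an étale groupoid, $H_*(\cG)$ is computed by the complex $C_c(\cG^{(n)}; \bZ)$ of compactly supported $\bZ$-valued functions on the space of composable $n$-tuples, with the usual alternating-sum differential; meanwhile $H_*(F(\cG))$ is computed by the bar complex of the discrete group $F(\cG)$. First I would construct a natural chain map from the bar complex of $F(\cG)$ to this compactly supported complex for $\cG$, sending an $n$-tuple $(\sigma_1, \dots, \sigma_n)$ of compact open bisections to the characteristic function on the corresponding subspace of $\cG^{(n)}$ cut out by the source-range conditions. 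Since $\cG^{(0)}$ is compact, every $\sigma \in F(\cG)$ has a well-defined characteristic function, and compositions match up as required to give a chain map.

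Next I would set up a filtration (or equivalent spectral sequence) on the mapping cone of this comparison map, whose associated graded involves groupoid homology with coefficients twisted by the natural $F(\cG)$-action on $\cG^{(0)}$. The key reduction is to show that, under the purely infinite minimal hypothesis, this twisted coefficient system collapses: for every non-empty compact open $V \subseteq \cG^{(0)}$, Definition \ref{def:purely-inf-minimal} produces a compact open bisection $\sigma$ with $s(\sigma) = \cG^{(0)}$ and $r(\sigma) \subseteq V$. This is the seed of a Mather--Varadarajan style \emph{swindle} inside $F(\cG)$: any element of the bar complex can be conjugated (by elements of $F(\cG)$ built from such bisections) into one supported in an arbitrarily small compact open subset of $\cG^{(0)}$, where it then becomes null-homologous by the vanishing of $H_*(\cG)$ in the relevant range.

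With the swindle in place, the inductive argument becomes straightforward. Assuming $H_*(\cG) = 0$ for $* < k$, one shows degree by degree that the comparison map induces an isomorphism: in degrees $0 < * < k$ both sides vanish, and in degree $k$ the map becomes an isomorphism. The ``in particular'' clause then follows by applying the result for every $k \geq 1$: the positive-degree homology of $F(\cG)$ vanishes, and combined with $H_0(F(\cG)) = \bZ$ this gives acyclicity.

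The main obstacle will be making the swindle rigorous at the level of the bar complex and controlling its interaction with the face maps to produce an explicit chain homotopy. The no-isolated-points hypothesis is essential here, since without it one cannot find arbitrarily small non-empty compact open subsets in $\cG^{(0)}$ in which to absorb the bisections $\sigma_i$, and the low-degree cancellation breaks down. A secondary technical point is that the conjugating element must itself lie in $F(\cG)$, which requires assembling compact open bisections on the complementary region via a further appeal to purely infinite minimality and the compactness of $\cG^{(0)}$; this is where the restriction to ample groupoids with compact unit space pays off, since one can work with genuine \emph{finite} partitions of $\cG^{(0)}$ into compact open bisections.
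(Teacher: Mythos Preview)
The paper does not contain a proof of this statement: Theorem~\ref{thm:homgy-groupoid-fullgroup} is quoted verbatim as \cite[Corollary~D]{Li2024} and used as a black box throughout \S\ref{section:proofs-of-theorems}. There is therefore no ``paper's own proof'' to compare your proposal against.

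For what it is worth, your sketch does not resemble Li's actual argument. Li's proof in \cite{Li2024} proceeds via small permutative categories built from compact open bisections, infinite loop space machinery, and a homological stability result in the spirit of Szymik--Wahl; the identification of $H_*(F(\cG))$ with $H_*(\cG)$ in the relevant range comes from analysing the resulting algebraic $K$-theory spectrum, not from a direct bar-complex swindle. Your proposed chain map from the bar complex of $F(\cG)$ to $C_c(\cG^{(\bullet)};\bZ)$ is a reasonable first object to write down, but the ``Mather--Varadarajan swindle'' you describe would at best show that $F(\cG)$ is acyclic on its own, not that its homology agrees with $H_*(\cG)$ degree by degree; indeed, a swindle that contracts cycles by pushing support into arbitrarily small compact opens does not obviously see the groupoid homology at all. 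The inductive step ``both sides vanish, and in degree $k$ the map becomes an isomorphism'' is asserted rather than argued, and it is precisely here that Li's machinery (or an equivalent spectral-sequence comparison with a carefully identified $E^1$-page) is doing the real work.
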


\section{Homology of some Thompson-like groups}
\label{section:proofs-of-theorems}

In this section we calculate the homology of labelled Thompson groups and of twisted Brin--Thompson groups by first realising them as topological full groups of some associated étale groupoids, then proving that the groupoid homology of these étale groupoids vanishes in all degrees, which implies that the topological full groups are acyclic by Theorem \ref{thm:homgy-groupoid-fullgroup}.

\subsection{Labelled Thompson groups}
\label{subsection:label-thom}

We first consider labelled Thompson groups. For a more detailed introduction to these groups, see for example \cite[\S 1]{WuWuZhaoZhou2025}; here we just discuss the minimum necessary for our purposes.

Let us first recall the definition of the labelled Thompson groups. Let $\{0,1\}^*$ denote the set of finite words in the two-element alphabet $\{0,1\}$ and note that the underlying set of the Cantor space $\{0,1\}^{\bN}$ may be thought of as the set of infinite ($\bN$-indexed) words in $\{0,1\}$. Given any finite word $w \in \{0,1\}^*$, let $\lvert w\rvert$ denote its word length and let $w\{0,1\}^{\bN}$ denote the subspace of the Cantor space $\{0,1\}^{\bN}$ consisting of (infinite) words starting with $w$. Note that $w\{0,1\}^{\bN}$ is a clopen subset. We shall refer to these clopen subsets of $\{0,1\}^{\bN}$ as \emph{standard clopen subsets}.

\begin{defn}
A finite subset $W \subset \{0,1\}^*$ is called a \emph{partition set} of $\{0,1\}^\bN$ if $w_1\{0,1\}^{\bN} \cap w_2\{0,1\}^{\bN} = \emptyset$ for any two distinct $w_1,w_2\in W$ and $\bigcup_{w\in W} w\{0,1\}^\bN = \{0,1\}^{\bN}$. In other words, the corresponding standard clopen subsets $w\{0,1\}^\bN$ for $w \in W$ must form a set-theoretic partition of $\{0,1\}^\bN$.
\end{defn}

Note that there is a lexicographical ordering on $\{0,1\}^*$ induced by $0<1$. We will always list the elements of a partition set from smallest to largest acccording to this ordering.

\subsubsection*{The Thompson group.}
Recall now that the Thompson group $\V$ is the subgroup of the homeomorphism group of the Cantor space $\{0,1\}^\bN$ consisting of those homeomorphisms $\phi$ for which there exists a partition set $W$ and a map $\sigma \colon W \to \{0,1\}^*$ such that $\phi$ is given by $\phi(wx) = \sigma(w)x$ for any $w\in W$ and $x\in \{0,1\}^{\bN}$.

If we write $W' = \sigma(W)$, then $W'$ is also a partition set and $\sigma$ is a bijection between the partition sets $W$ and $W'$; thus we can represent the homeomorphism $\phi$ by the triple $(W',\sigma,W)$, which we call a \emph{table}. However, such a representation is not unique as one can replace an element $u\in W$ by the pair of elements $u0$ and $u1$, correspondingly $\sigma(u) \in W'$ by $\sigma(u)0$ and $\sigma(u)1$, without changing the homeomorphism $\phi$. Let us denote these new partition sets by $\bar{W}$ and $\bar{W}'$ and the new bijection by $\bar{\sigma}$. Thus $\bar{\sigma}(w) = \sigma(w)$ if $w \neq u$, and $\bar{\sigma}(u0) = \sigma(u)0$ and $\bar{\sigma}(u1) = \sigma(u)1$. We call the table $(\bar{W}',\bar{\sigma},\bar{W})$ obtained in this way an \emph{expansion} $(W',\sigma, W)$ and we call $(W',\sigma, W)$ a \emph{reduction} of $(\bar{W}',\bar{\sigma},\bar{W})$. We denote the equivalence class of the table $(W',\sigma, W)$ under reduction and expansion by $[W',\sigma,W]$.

Each element of $V$ can be uniquely represented by an equivalence class of tables as in the paragraph above. Moreover, since we always list the elements of the partition sets $W$ and $W'$ from smallest to largest, we may identity both of them with the set $[n]= \{1,2,\ldots,n\}$ via the unique order-preserving bijection. Thus we may assume that $\sigma$ lies in the permutation group $\mathrm{S}_n$ of $[n]$.

\subsubsection*{Labelled Thompson groups.}
Now let us fix a (discrete) group $G$ and define the labelled Thompson group $\V(G)$. The elements of $\V(G)$ will be equivalence classes of $G$-tables under the equivalence relation generated by $G$-expansion and $G$-reduction; to define the underlying set of $\V(G)$ we therefore just have to define $G$-tables, $G$-expansion and $G$-reduction.

A \emph{$G$-table} is a triple of the form $(\{w'_1,\ldots, w'_n\} ,((g_1,\ldots,g_n),\sigma),\{w_1,\ldots,w_n\})$ where $\{w_1,\ldots,w_n\}$ and $\{w'_1,\ldots, w'_n\}$ are partition sets whose elements are listed in lexicographical order, $\sigma$ is an element in $\mathrm{S}_n$ and each $g_i \in G$. We will freely pass between viewing $\sigma$ as an element of $\mathrm{S}_n$ and as a bijection $\{w_1,\ldots,w_n\} \to \{w'_1,\ldots, w'_n\}$ via the unique order-preserving bijections with $[n]$, as above. When we are viewing $\sigma$ as an element of $\mathrm{S}_n$, we may also view the tuple $((g_1,\ldots,g_n),\sigma)$ as an element of the wreath product $G \wr \mathrm{S}_n$, so we may write any $G$-table more succinctly as $(W',\alpha,W)$ for $\alpha \in G \wr \mathrm{S}_n$ and partition sets $W,W'$.

The \emph{$G$-expansion} of $(\{w'_1,\ldots, w'_n\} ,((g_1,\ldots,g_n),\sigma),\{w_1,\ldots,w_n\})$ at the position $i$ is defined by:
\[
(\{w_1',\ldots,w_{\sigma(i)}'0, w_{\sigma(i)}'1,\ldots, w_n'\} ,((g_1,\ldots,g_{i-1},g_i,g_i,g_{i+1},\ldots,g_n),\bar\sigma),\{w_1,\ldots,w_i0,w_i1,\ldots,w_n\})
\]
where $\bar{\sigma}(w_j) = \sigma(w_j)$ for $j\neq i$ and $\bar{\sigma}(w_i 0) = \sigma(w_i)0$ and $\bar{\sigma}(w_i 1) = \sigma(w_i)1$. In general, a \emph{$G$-expansion} of a $G$-table is a $G$-expansion as described above at any position $1\leq i\leq n$, and a \emph{$G$-reduction} is the reverse operation. We denote the equivalence class of the $G$-table $(W',\alpha,W)$ under $G$-expansion and $G$-reduction by $[W',\alpha,W]$. As indicated above, these equivalence classes are the elements of $\V(G)$.

We define the group operation of $\V(G)$ as follows. First, we define
\[
[U,\beta,W'] [W',\alpha, W] = [U,\beta\alpha,W],
\]
where $\alpha,\beta \in G \wr \mathrm{S}_n$ (here, $n$ is the common size of the partition sets $W,W',U$) and $\beta\alpha$ uses the multiplication rule in the wreath product $G\wr \mathrm{S}_n$. To verify that this gives a well-defined operation on $\V(G)$, one checks that (i) this definition is stable under $G$-expansion and $G$-reduction and (ii) for any two $G$-tables $(U',\beta,U)$ and $(W',\alpha,W)$ one may apply $G$-expansion operations until one has $U=W'$. The identity element is $(\{\varnothing\},\mathrm{id},\{\varnothing\})$, where $\varnothing$ denotes the empty word, and the inverse of $(W',\alpha,W)$ is given by $(W,\alpha^{-1},W')$, where $\alpha^{-1}$ is computed using the inverse in the wreath product $G \wr \mathrm{S}_n$.

Finally, we note that there is a natural embedding of groups $\iota_0 \colon G \hookrightarrow \V(G)$ given by sending $g \in G$ to $[\{0,1\}, ((g,e),\mathrm{id}),\{0,1\}]$, where $e$ denotes the identity element of $G$.

\subsubsection*{Labelled Thompson groups as topological full groups.}

We now build a topological groupoid whose topological full group is isomorphic to $\V(G)$. Recall that any discrete group may be viewed as a groupoid with a unique object and the elements of the group as morphisms. Equipping the morphism space with the discrete topology, we can therefore view any discrete group $G$ as a topological groupoid. Using this viewpoint, we can form the product groupoid $\mathcal{V}_2 \times G$, where $\cV_2$ is the topological groupoid associated to the Thompson group $\V$ described in Example \ref{ex:SFT}.

\begin{prop}
\label{prop:full-group-VH}
The topological full group of the topological groupoid $\mathcal{V}_2 \times G$ is isomorphic to $\V(G)$.
\end{prop}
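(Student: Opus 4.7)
The strategy is to write down an explicit map $\Phi \colon \V(G) \to F(\mathcal{V}_2 \times G)$, check that it is well-defined on equivalence classes of $G$-tables, and verify that it is a bijective group homomorphism.

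First, I would observe that the basic compact open bisections of $\mathcal{V}_2$ arising from \eqref{eq:basic-open-set-of-V2} with $U = w\{0,1\}^\bN$, $V = w'\{0,1\}^\bN$, $l = |w|$, $m = |w'|$ take the explicit form
\[
\beta(w',w) := \{(w'z,\,|w|-|w'|,\,wz) : z \in \{0,1\}^\bN\},
\]
with source $w\{0,1\}^\bN$ and range $w'\{0,1\}^\bN$. These are the ``prefix replacement'' bisections, and every compact open bisection of $\mathcal{V}_2$ is a finite disjoint union of such pieces. Since $G$ is discrete, every compact open bisection of $\mathcal{V}_2 \times G$ is a finite disjoint union of sets of the form $\beta(w',w) \times \{g\}$. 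For the bisection to have source and range equal to $\mathcal{V}_2^{(0)} \times \{e\} \cong \{0,1\}^\bN$, the collections of $w$'s and of $w'$'s appearing must each form a partition set.

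Given a $G$-table $(W', ((g_1,\dots,g_n),\sigma), W)$ with $W = \{w_1,\dots,w_n\}$ and $W' = \{w'_1,\dots,w'_n\}$, I would therefore define
\[
\Phi[W', ((g_1,\dots,g_n),\sigma), W] := \bigsqcup_{i=1}^n \beta(w'_{\sigma(i)}, w_i) \times \{g_i\}.
\]
To verify well-definedness, I would check that a $G$-expansion at position $i$ preserves the resulting bisection, which follows immediately from the identity $\beta(w',w) = \beta(w'0, w0) \sqcup \beta(w'1, w1)$ together with the fact that the expansion duplicates the label $g_i$.

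For bijectivity, surjectivity follows from the explicit description of compact open bisections of $\mathcal{V}_2 \times G$ with full source and range in the first paragraph, after a refinement argument to ensure that the source pieces and range pieces each align into partition sets of equal size matched by a permutation. Injectivity follows from the standard common-expansion argument: two $G$-tables defining the same bisection must admit a common $G$-expansion, hence represent the same class in $\V(G)$. Finally, I would check that $\Phi$ is a homomorphism by computing composition piece-by-piece: for composable basic bisections one has
\[
\bigl(\beta(a,b)\times\{h\}\bigr) \cdot \bigl(\beta(b,c)\times\{g\}\bigr) = \beta(a,c)\times\{hg\},
\]
and after using $G$-expansions to arrange a common partition set for the two tables being multiplied, summing this formula over indices reproduces the wreath-product multiplication on $G \wr \mathrm{S}_n$ used in the definition of the group operation of $\V(G)$.

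The main obstacle is the bookkeeping in the homomorphism check, namely matching the order of multiplication in $G \wr \mathrm{S}_n$ (whose permutation-twisted labels look like $(h_{\sigma(1)}g_1,\dots,h_{\sigma(n)}g_n)$) with the order in which labels get multiplied when composing bisections indexed by the common partition set $W' = U$. Everything else is essentially a translation between the combinatorial description of $\V(G)$ via tables and the geometric description of bisections.
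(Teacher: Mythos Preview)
Your proposal is correct and follows essentially the same approach as the paper: both arguments identify elements of $F(\mathcal{V}_2\times G)$ with finite disjoint unions of basic bisections $\beta(w',w)\times\{g\}$ and match these with $G$-tables. The only cosmetic difference is that the paper constructs a pair of mutually inverse maps $I,J$ rather than a single bijective homomorphism $\Phi$, and phrases the bisection associated to a table via an auxiliary choice of elements $h_i,k_i\in G$ with $k_i^{-1}h_i=g_i$; your direct formula $\beta(w'_{\sigma(i)},w_i)\times\{g_i\}$ is the same object and the underlying bookkeeping is identical.
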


To prove this, it will be convenient to see first why the topological full group $F(\mathcal{V}_2)$ is isomorphic to the Thompson group $\V$. We provide a proof of this standard fact for the reader's convenience.

\begin{lem}
\label{lem:groupdV2-full-V}
We have $F(\mathcal{V}_2) \cong \V$. 
\end{lem}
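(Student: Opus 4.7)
The plan is to identify the topological full group $F(\mathcal{V}_2)$ with $\V$ by realising both as the same subgroup of $\Homeo(\{0,1\}^{\bN})$. First I would verify that $\mathcal{V}_2$ is effective, which is straightforward because an element $(y,n,x) \in \mathcal{V}_2$ lying in the isotropy subgroupoid (i.e.\ with $y=x$) and in some basic open set \eqref{eq:basic-open-set-of-V2} with $l \neq m$ cannot have a neighbourhood of such elements—the set of $x$ with $\rho^l(x) = \rho^m(x)$ is nowhere dense in $\{0,1\}^{\bN}$ whenever $l \neq m$. Together with Lemma \ref{lem:full-group-faithful}, this yields an injective homomorphism $\Phi \colon F(\mathcal{V}_2) \to \Homeo(\{0,1\}^{\bN})$, and it remains to show that the image is exactly $\V$.

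The key computational step is to describe the elementary building blocks. For any two finite words $u,v \in \{0,1\}^*$ with $|u| = l$ and $|v| = m$, consider the subset
\[
\sigma(v,u) := \{(vx',\, l-m,\, ux') \mid x' \in \{0,1\}^{\bN}\} \subseteq \mathcal{V}_2,
\]
which is a compact open bisection with source $u\{0,1\}^{\bN}$, range $v\{0,1\}^{\bN}$, and associated partial homeomorphism $ux' \mapsto vx'$. I would then argue that every compact open bisection $\tau \subseteq \mathcal{V}_2$ with $r(\tau) = s(\tau) = \{0,1\}^{\bN}$ admits a finite disjoint decomposition $\tau = \bigsqcup_{i=1}^n \sigma(v_i,u_i)$ of this elementary form. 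This proceeds in two sub-steps: since $\tau$ is compact and the $\mathcal{V}_2[i]$ form a disjoint open cover, $\tau$ is contained in finitely many of them; and on each such slice, covering $\tau$ by basic opens of the form \eqref{eq:basic-open-set-of-V2}, refining so that the sets $U,V$ are standard clopens $u\{0,1\}^{\bN}$ and $v\{0,1\}^{\bN}$ (possible since standard clopens form a basis for the Cantor topology), and passing to a finite disjointification, yields the required decomposition. The bisection plus total source/range condition then forces $\{u_1,\dots,u_n\}$ and $\{v_1,\dots,v_n\}$ to be partition sets.

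Given this decomposition, $\Phi(\tau)$ sends $u_i x' \mapsto v_i x'$ for each $i$, which is exactly the definition of a Thompson element in $\V$. Conversely, any $\V$-element specified by a table $(W',\sigma,W)$ with $W = \{u_1,\dots,u_n\}$ and $W' = \{v_1,\dots,v_n\}$ is the image under $\Phi$ of $\bigsqcup_i \sigma(v_{\sigma(i)},u_i) \in F(\mathcal{V}_2)$. Finally, checking that $\Phi$ is a group homomorphism reduces to observing that multiplication of bisections in $F(\mathcal{V}_2)$ is defined precisely so as to compute the composition of the associated partial homeomorphisms, which matches composition in $\V$.

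The main obstacle I anticipate is the decomposition step, specifically making precise the refinement argument that allows one to replace arbitrary open $U,V$ in \eqref{eq:basic-open-set-of-V2} by standard clopens $u\{0,1\}^{\bN},v\{0,1\}^{\bN}$ with $|u|=l$ and $|v|=m$, while respecting the constraint $\rho^l(x) = \rho^m(y)$ on the bisection. Everything else is essentially bookkeeping once this structural statement about compact open bisections is in place.
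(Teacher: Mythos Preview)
Your proposal is correct and follows essentially the same approach as the paper's proof: both verify effectiveness of $\mathcal{V}_2$ (the paper phrases it via a countability argument for eventually periodic sequences, you via nowhere-density, which amounts to the same thing), invoke Lemma~\ref{lem:full-group-faithful} to embed $F(\mathcal{V}_2)$ in $\Homeo(\{0,1\}^{\bN})$, and then establish the two inclusions by decomposing compact open bisections into finitely many standard pieces $\sigma(v,u)$ after refining to standard clopens. The paper handles the decomposition step in the same ``up to subdivision'' style you anticipate, so your concern about that refinement is not a genuine obstacle.
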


\begin{proof}
Recall from Example \ref{ex:SFT} that $\cV_2$ is an ample groupoid with unit space the Cantor space $\{0,1\}^\bN$. We would like to view its topological full group as a subgroup of $\Homeo(\{0,1\}^\bN)$ by Lemma \ref{lem:full-group-faithful}, so we first check that it is effective, i.e., that the interior of its isotropy subgroupoid
\[
\mathrm{Iso}(\cV_2) = \{(y,n,x) \in \cV_2 \mid r(y,n,x) = s(y,n,x)\}
\]
coincides with its unit space $\cV_2^{(0)}$. Since $r((y,n,x)) = y$ and $s((y,n,x)) = x$, the elements in the isotropy subgroupoid are precisely those of the form $(x,n,x)$. Since $\cV_2$ splits as the disjoint union of clopen subspaces $\cV_2[i]$ for $i \in \bZ$ (see Example \ref{ex:SFT}), the isotropy subgroupoid splits correspondingly into clopen subspaces $\mathrm{Iso}(\cV_2)[i] = \{ (y,n,x) \in \cV_2 \mid x=y \text{ and } n=i \}$.

Clearly we have $\mathrm{Iso}(\cV_2)[0] = \cV_2^{(0)}$, so what we have to show is that the subspace $\bigcup_{i\neq 0} \mathrm{Iso}(\cV_2)[i]$ of $\cV_2$ has empty interior. To see this, we first observe that every non-empty open subset of $\cV_2$ is uncountable; this follows from the fact that it is locally homeomorphic to the Cantor space $\{0,1\}^\bN \cong \cV_2^{(0)}$ and the fact that the Cantor space has this property. Second, we observe that the elements of $\bigcup_{i\neq 0} \mathrm{Iso}(\cV_2)[i]$ are precisely those of the form $(x,i,x)$ for $i \neq 0$, which implies that $x$ must be eventually $i$-periodic. There are only countably many eventually periodic sequences, so the subspace $\bigcup_{i\neq 0} \mathrm{Iso}(\cV_2)[i]$ must be countable. Hence, by the first observation, the only open subset contained in it is the empty set. Thus, it has empty interior, and we have shown that $\cV_2$ is effective.

It is also clear that $\cV_2$ is Hausdorff (it is a subspace of the product $\{0,1\}^\bN \times \bZ \times \{0,1\}^\bN$), so Lemma \ref{lem:full-group-faithful} tells us that $F(\mathcal{V}_2)$ is a subgroup of 
$\Homeo(\{0,1\}^\bN)$. By definition, the Thompson group $\V$ is also a subgroup of $\Homeo(\{0,1\}^\bN)$. We will show that they are equal as subgroups.

\textbf{1.} $F(\mathcal{V}_2) \supseteq \V$.
As discussed above, given any element $g\in \V$, we can represent it by a table $(W',\sigma,W)$ where $W,W'$ are finite partition sets of $\{0,1\}^\bN$. Suppose that $w\in W$ is mapped to $w'\in W'$ under $\sigma$. We need to prove that the prefix substitution homeomorphism $w\{0,1\}^{\bN} \to w'\{0,1\}^{\bN}$ can be represented by a compact open bisection in $\cV_2$. This compact open bisection may be represented diagrammatically as follows:
\[
w'\{0,1\}^{\bN}  \xrightarrow[\cong]{\;\rho^{\lvert w' \rvert}\;} \{0,1\}^{\bN} \xleftarrow[\cong]{\;\rho^{\lvert w \rvert}\;} w\{0,1\}^{\bN},
\]
where $\lvert - \rvert$ denotes word length. Explicitly, the compact open bisection is:
\[
\{(y , \lvert w \rvert - \lvert w' \rvert , x) \in \cV_2 \mid y\in w'\{0,1\}^{\bN}, x\in w\{0,1\}^{\bN},  \rho^{\lvert w' \rvert}(y) =\rho^{\lvert w \rvert}(x)\} ,
\]
which may alternatively be written as
\[
\{ (w'z , \lvert w \rvert - \lvert w' \rvert , wz) \mid z \in \{0,1\}^\bN \} .
\]

\textbf{2.} $F(\mathcal{V}_2) \subseteq \V$.
Note first that the unit space of $\cV_2$ is the Cantor space, which is compact. Now, any element $f \in F(\mathcal{V}_2)$ may be represented as the disjoint union of finitely many compact open bisections of the form
\[
\{(y,l-m,x) \in \cV_2 \mid y\in U', x\in U,\rho^m(y) = \rho^{l}(x)\},
\]
where $l,m \in \bZ$ and $U,U'$ are open subspaces of $\{0,1\}^{\bN}$. Up to subdivision, we may assume that $U$ and $U'$ are standard clopen subsets. Thus we may represent $f$ as the disjoint union of finitely many compact open bisections of the form
\[
\{(y , \lvert w_i \rvert - \lvert w'_i \rvert , x) \in \cV_2 \mid y\in w'_i\{0,1\}^{\bN}, x\in w_i\{0,1\}^{\bN},\rho^{\lvert w_i' \rvert}(y) = \rho^{\lvert w_i \rvert}(x)\},
\]
where $w_i$ and $w'_i$ are words in $\{0,1\}^{\ast}$. Putting all of the $w_i$ (resp.\ $w'_i$) together, we get a partition set $W$ (resp.\ $W'$) of $\{0,1\}^{\bN}$. Letting $\sigma$ be the bijection taking $w_i$ to $w'_i$, we have represented the homeomorphism $f$ by the table $(W',\sigma,W)$. But this, by definition, represents an element of the Thompson group $\V$.
\end{proof}

\begin{proof}[Proof of Proposition \ref{prop:full-group-VH}.]
We will construct group homomorphisms $I \colon F(\cV_2\times G) \to \V(G)$ and $J \colon \V(G) \to F(\cV_2\times G)$ that are inverse to each other.

Note first that the unit space of $\cV_2 \times G$ is $\cV_2^{(0)} \times \{e\}$, since the unit space of $G$ consists of just the identity element $e$ of the group (cf.\ also Example \ref{ex:product}). This is homeomorphic to the Cantor space $\cC$; in particular it is compact.

Any element $f\in F(\cV_2\times G)$ may be written as a disjoint union of finitely many compact open bisections of the form
\[
\{ ((y,l-m,x),g) \in \cV_2 \times G \mid x\in U, y\in U', \rho^{m}(y) = \rho^l(x) \},
\]
where $l,m\in \bZ$, $U,U'$ are open subspaces of $\{0,1\}^{\bN}$ and $g\in G$ is fixed. Just as before, we may assume by subdivision that $U,U'$ are standard clopen sets. In particular, we may represent the element $f$ by a $G$-table $(W',((g_1,\ldots,g_n),\sigma),W)$, where $W,W'$ are finite partition sets with $n$ elements. This gives an element in $\V(G)$, which we denote by $I(f)$. This defines the map $I$, which one may readily check is a group homomorphism.

To construct $J$, we start with an element of $\V(G)$, which is an equivalence class of $G$-tables, which we denote by $[W',((g_1,\ldots,g_n),\sigma),W]$. Suppose that $w\in W$ is mapped to $w'\in W'$ under $\sigma$, and that it is the $i$th element ($1\leq i\leq n$) in $W$ in the lexicographical ordering. We consider the compact open bisection represented by the following diagram:
\[
w'\{0,1\}^{\bN} \times \{e\} \xrightarrow[\cong]{\;\rho^{\lvert w' \rvert} \times k_i\;} \{0,1\}^{\bN} \times \{e\} \xleftarrow[\cong]{\;\rho^{\lvert w \rvert} \times h_i\;} w\{0,1\}^{\bN} \times \{e\},
\]
where $h_i,k_i$ are any two fixed elements in $G$ such that $k_i^{-1}h_i = g_i$. Note that here the choices of $h_i$ and $k_i$ are not unique. Taking the disjoint union of these compact open bisections over all $w \in W$, we obtain an element of $F(\cV_2\times G)$, which we define to be the image of $[W',((g_1,\ldots,g_n),\sigma),W]$ under $J$. This defines the map $J$, which one may again readily check is a group homomorphism.

One may directly check, unwinding the definitions, that $I\circ J = \mathrm{id}_{\V(G)}$ and $J\circ I = \mathrm{id}_{F(\cV_2\times G)}$.
\end{proof}

\begin{lem}
\label{lem:homology-prod-VH}
We have $H_i(\mathcal{V}_2 \times G) = 0$ for any $i\geq 0$.
\end{lem}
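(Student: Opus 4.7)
The plan is a two-step computation: first apply a Künneth-type reduction for étale groupoid homology to express $H_*(\mathcal{V}_2 \times G)$ in terms of its two factors, and then verify that $H_*(\mathcal{V}_2)$ already vanishes in every degree, which forces the whole homology to vanish regardless of $G$.

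For the Künneth step, one invokes a Künneth theorem for the homology of ample Hausdorff groupoids; such statements are standard in the literature on étale groupoid homology and reflect the chain-level identification $C_c(\mathcal{G}_1 \times \mathcal{G}_2;\mathbb{Z}) \cong C_c(\mathcal{G}_1;\mathbb{Z}) \otimes C_c(\mathcal{G}_2;\mathbb{Z})$. Since a discrete group viewed as a one-object groupoid has groupoid homology equal to its ordinary group homology $H_*(G;\mathbb{Z})$, the Künneth formula delivers a short exact sequence
\begin{equation*}
0 \to \bigoplus_{p+q=n} H_p(\mathcal{V}_2) \otimes H_q(G) \to H_n(\mathcal{V}_2 \times G) \to \bigoplus_{p+q=n-1} \mathrm{Tor}\bigl(H_p(\mathcal{V}_2), H_q(G)\bigr) \to 0,
\end{equation*}
so it suffices to prove that $H_p(\mathcal{V}_2) = 0$ for every $p \geq 0$.

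For this, I would use Matui's Pimsner--Voiculescu-style long exact sequence for the groupoid of a one-sided shift of finite type. The decomposition $\mathcal{V}_2 = \bigsqcup_{i \in \mathbb{Z}} \mathcal{V}_2[i]$ from Example \ref{ex:SFT} exhibits $\mathcal{V}_2[0]$ as the tail-equivalence AF-relation on the Cantor space, and a standard computation gives $H_0(\mathcal{V}_2[0]) \cong \mathbb{Z}[1/2]$ and $H_i(\mathcal{V}_2[0]) = 0$ for $i \geq 1$; moreover, the shift $\rho$ induces multiplication by $2$ on $H_0$. Matui's long exact sequence
\begin{equation*}
\cdots \to H_n(\mathcal{V}_2[0]) \xrightarrow{\,1 - \rho_*\,} H_n(\mathcal{V}_2[0]) \to H_n(\mathcal{V}_2) \to H_{n-1}(\mathcal{V}_2[0]) \to \cdots
\end{equation*}
then has vanishing flanking terms for every $n \geq 1$, while for $n = 0$ the map $1 - \rho_* = -1$ is an automorphism of $\mathbb{Z}[1/2]$; either way $H_n(\mathcal{V}_2) = 0$.

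The only real obstacle is the Künneth step: one must ensure the chosen formulation applies to groupoids in which one factor is a potentially very large discrete group and is consistent with the homology conventions of \cite{Li2024}. If that turns out to be delicate, a direct alternative is to view $\mathcal{V}_2 \times G$ as the semi-direct product $\mathcal{V}_2 \rtimes_f G$ with trivial action $f$ (as in Example \ref{ex:semi-direct-product}) and run the corresponding Hochschild--Serre-type spectral sequence, whose $E^2$-page $H_p(G; H_q(\mathcal{V}_2))$ collapses to zero the moment the vanishing of $H_*(\mathcal{V}_2)$ from Step 2 is in hand.
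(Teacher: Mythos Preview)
Your proposal is correct and follows essentially the same strategy as the paper: reduce via the K\"unneth formula to the vanishing of $H_*(\mathcal{V}_2)$. The paper is more terse, citing Matui \cite[Theorem~4.14]{Matui2012} directly for $H_i(\mathcal{V}_2)=0$ and \cite[Theorem~2.4]{Matui2016} for the K\"unneth step rather than sketching either; your spectral-sequence alternative is also valid and is precisely what the paper uses later in the twisted Brin--Thompson setting (Proposition~\ref{prop:groupoi-svg-cyclic}).
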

\begin{proof}
Recall that Matui proved in \cite[Theorem 4.14]{Matui2012} that $H_i(\mathcal{V}_2) = 0$ for any $i\geq 0$. The lemma now follows immediately from the K\"unneth formula \cite[Theorem 2.4]{Matui2016}.
\end{proof}

\begin{lem}\label{lem:minimal+comprision-VH}
The groupoid $\mathcal{V}_2 \times G$ is purely infinite minimal.
\end{lem}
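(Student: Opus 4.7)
The plan is to reduce the problem to the case of $\cV_2$ alone, and then to construct the required compact open bisections in $\cV_2$ explicitly using the prefix substitution bisections that already appeared in the proof of Lemma \ref{lem:groupdV2-full-V}.

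For the reduction, the key observation is that the unit space of $\cV_2 \times G$ is $\cV_2^{(0)} \times \{e\}$, which projects homeomorphically onto the Cantor space $\cV_2^{(0)}$. If $\tau \subseteq \cV_2$ is any compact open bisection with $s(\tau) = U$ and $r(\tau) \subseteq V$, then $\tau \times \{e\}$ is a compact open bisection of $\cV_2 \times G$ with source $U \times \{e\}$ and range contained in $V \times \{e\}$. Hence it suffices to prove that $\cV_2$ itself is purely infinite minimal; the $G$ factor plays no role whatsoever.

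To do this, given compact open subsets $U, V \subseteq \{0,1\}^\bN$ with $V$ nonempty, I would first write $U$ as a finite disjoint union of standard clopen subsets $\bigsqcup_{i=1}^n w_i\{0,1\}^\bN$ (using the fact that these form a basis for the Cantor space), pick any standard clopen $v\{0,1\}^\bN \subseteq V$, and subdivide it into $n$ pairwise disjoint standard clopens $v_i\{0,1\}^\bN$ by appending $n$ distinct binary strings of a common sufficiently long length to $v$. Then
\[
\tau = \bigsqcup_{i=1}^n \bigl\{\bigl(v_i z,\ \lvert w_i \rvert - \lvert v_i \rvert,\ w_i z\bigr) \mid z \in \{0,1\}^\bN\bigr\}
\]
is a disjoint union of prefix substitution bisections of the form described in Lemma \ref{lem:groupdV2-full-V}. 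The pieces are disjoint because both their sources $w_i\{0,1\}^\bN$ and their ranges $v_i\{0,1\}^\bN$ are pairwise disjoint, and by construction $s(\tau) = U$ and $r(\tau) \subseteq v\{0,1\}^\bN \subseteq V$.

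There is no real obstacle; the argument is essentially routine once one sees that the prefix substitution bisections from Lemma \ref{lem:groupdV2-full-V} are flexible enough to map any standard clopen onto any other. The only minor point requiring care is to cut $v\{0,1\}^\bN$ into at least $n$ pieces so that there is enough room in $V$ to accommodate the partition of the source $U$.
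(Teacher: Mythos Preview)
Your proposal is correct and follows essentially the same approach as the paper's proof: both reduce to showing $\cV_2$ is purely infinite minimal (the paper does this implicitly by always using the identity $e$ in the $G$-coordinate), decompose $U$ into finitely many standard clopens, subdivide on the target side to have at least as many pieces, and assemble a disjoint union of prefix-substitution bisections. The only cosmetic difference is that you fix a single standard clopen inside $V$ and subdivide it, whereas the paper first covers all of $V$ by standard clopens and then subdivides to ensure there are enough; neither variant requires any additional idea.
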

\begin{proof}
Recall that the unit space of $G$ consists of the single identity element $e$. So the unit space of $\cV_2\times G$ is $\{0,1\}^{\bN} \times \{e\}$, which is a Cantor space. The result now follows from the fact that the topological groupoid $\mathcal{V}_2$ is purely infinite minimal. In fact, give any compact open subspaces $U,U' \subseteq ({\cV}_2\times G)^{(0)}$ with $U'\neq \emptyset$, we can find coverings of $U$ and $U'$ by disjoint standard clopen sets:
\[
U = \bigcup_{i\in I} u_i\{0,1\}^{\bN} \times \{e\} \quad\text{and}\quad U' = \bigcup_{j\in J} u'_j\{0,1\}^{\bN} \times \{e\},
\]
where $u_i, u'_j \in \{0,1\}^\ast$. Since $U$ and $U'$ are compact, we may take $I$ and $J$ to be finite. Up to decomposing $u'_j\{0,1\}^{\bN} \times \{e\}$ further into smaller standard clopen sets, we can assume that $\lvert I \rvert < \lvert J \rvert$. Then for each $1\leq i\leq \lvert I \rvert$, we can build a bisection as follows:
\[
u'_i\{0,1\}^{\bN} \times \{e\} \xrightarrow[\cong]{\;\rho^{\lvert u'_i \rvert} \times e\;} \{0,1\}^{\bN} \times \{e\} \xleftarrow[\cong]{\;\rho^{\lvert u_i \rvert} \times e\;} u_i\{0,1\}^{\bN} \times \{e\}.
\]
To check that this bisection is compact and open, we note that it is the compact open set described by:
\[
\{((y, \lvert u_i \rvert - \lvert u'_i \rvert , x),e ) \mid x\in u_i\{0,1\}^{\bN}, y \in u'_i\{0,1\}^{\bN}, \rho^{\lvert u'_i \rvert}(y) = \rho^{\lvert u_i \rvert}(x) \} ,
\]
or equivalently:
\[
\{ ((u'_i z , \lvert u_i \rvert - \lvert u'_i \rvert , u_i z) , e) \mid z \in \{0,1\}^\bN \} .
\]

Thus, for each $1\leq i\leq \lvert I \rvert$, we have a compact open bisection taking $u_i\{0,1\}^{\bN}$ onto $u'_i\{0,1\}^{\bN}$. Since they have disjoint ranges and sources, we may take their union to obtain a compact open bisection taking $U = \bigcup_{1\leq i\leq |I|} u_i\{0,1\}^{\bN}$ onto $\bigcup_{1\leq i\leq |I|} u'_i\{0,1\}^{\bN} \subseteq U'$.
\end{proof}

We are now ready to prove the following theorem, which is Theorem \ref{thm:VG} of the introduction.

\begin{thm}
\label{thm:laV-acyc}
The labelled Thompson group $\V(G)$ is acyclic for any discrete group $G$.
\end{thm}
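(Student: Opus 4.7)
The plan is to combine all the pieces that have just been assembled and feed them into Xin Li's theorem (Theorem \ref{thm:homgy-groupoid-fullgroup}). Concretely, I would apply that theorem to the groupoid $\cV_2 \times G$, whose topological full group is $\V(G)$ by Proposition \ref{prop:full-group-VH}. To do so, I need to verify the four hypotheses: that $\cV_2 \times G$ is ample, purely infinite minimal, has a unit space with no isolated points, and has vanishing homology in all degrees.

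First I would note that $\cV_2$ is ample (Example \ref{ex:SFT}) and $G$, viewed as a topological groupoid with the discrete topology, is ample (Example \ref{ex:discrete-groups}), so by Example \ref{ex:product} the product $\cV_2 \times G$ is ample as well. Its unit space is $\cV_2^{(0)} \times \{e\} \cong \{0,1\}^{\bN}$, which is homeomorphic to the Cantor space and hence has no isolated points. Pure infinite minimality is precisely Lemma \ref{lem:minimal+comprision-VH}, and the vanishing of $H_*(\cV_2 \times G)$ in all degrees is Lemma \ref{lem:homology-prod-VH}.

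With these hypotheses in hand, Theorem \ref{thm:homgy-groupoid-fullgroup} immediately yields that $F(\cV_2 \times G)$ is acyclic. Since $F(\cV_2 \times G) \cong \V(G)$ by Proposition \ref{prop:full-group-VH}, this gives the desired conclusion.

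There is no real obstacle at this stage: the substantive work has already been done in establishing the isomorphism with a topological full group (Proposition \ref{prop:full-group-VH}), the K\"unneth-type homology vanishing (Lemma \ref{lem:homology-prod-VH}, using Matui's computation $H_*(\cV_2) = 0$), and pure infinite minimality (Lemma \ref{lem:minimal+comprision-VH}). The theorem reduces to assembling these facts and invoking Theorem \ref{thm:homgy-groupoid-fullgroup}, so the proof should be only a few lines long.
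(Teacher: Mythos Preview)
Your proposal is correct and matches the paper's proof essentially line for line: both verify that $\cV_2 \times G$ is ample with Cantor unit space (hence no isolated points), invoke Lemma \ref{lem:minimal+comprision-VH} for pure infinite minimality and Lemma \ref{lem:homology-prod-VH} for vanishing groupoid homology, then apply Theorem \ref{thm:homgy-groupoid-fullgroup} together with Proposition \ref{prop:full-group-VH}.
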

\begin{proof}
Note first that the groupoid $\mathcal{V}_2 \times G$ is an ample groupoid whose unit space is the Cantor set (see Examples \ref{ex:discrete-groups}, \ref{ex:SFT} and \ref{ex:product}); in particular, its unit space does not have isolated points. By Lemma \ref{lem:minimal+comprision-VH}, the groupoid $\cV_2\times G$ is purely infinite minimal. Thus, by Lemma \ref{lem:homology-prod-VH} and Theorem \ref{thm:homgy-groupoid-fullgroup}, the topological full group $F(\cV_2 \times G)$ is acyclic, and the result now follows from Proposition \ref{prop:full-group-VH}.
\end{proof}

\subsubsection*{The centre of labelled Thompson groups.}
We also take this opportunity to calculate the centre of $\V(G)$. To do this, we will use the following result of Wu-Wu-Zhao-Zhou \cite{WuWuZhaoZhou2025}. 

\begin{thm}[{\cite[Theorem 2.1]{WuWuZhaoZhou2025}}]
\label{thm:VG-normal-subgroup}
Any proper normal subgroup of $\V(G)$ is contained in the kernel of the map $\pi \colon \V(G) \to \V$ that forgets the labels, i.e., it sends $[W',((g_1,\ldots, g_n),\sigma), W] \in \V(G)$ to $[W',\sigma, W] \in \V$.
\end{thm}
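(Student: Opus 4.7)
The plan is to exploit the simplicity of Thompson's group $\V$ together with a commutator computation inside $\ker(\pi)$. For any normal subgroup $N \triangleleft \V(G)$, the image $\pi(N)$ is normal in $\V$, and by simplicity it is either trivial---in which case $N \subseteq \ker(\pi)$, as required---or all of $\V$. The substance of the proof is therefore to show that if $\pi(N) = \V$ then $N = \V(G)$.

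It is convenient to identify $\ker(\pi)$ with the group $C(\cC, G)$ of locally constant functions from the Cantor space $\cC = \{0,1\}^{\bN}$ to $G$, where a table $[W,((g_1,\ldots,g_n),\mathrm{id}),W] \in \ker(\pi)$ corresponds to the function taking value $g_i$ on $w_i\{0,1\}^{\bN}$. This realises $\V(G)$ as the semidirect product $C(\cC, G) \rtimes \V$, where $\V$ acts on $C(\cC, G)$ by pre-composition with the inverse homeomorphism. Under the assumption $\pi(N) = \V$, we have $N \cdot \ker(\pi) = \V(G)$, so proving $N = \V(G)$ reduces to showing that $M := N \cap \ker(\pi)$ is all of $C(\cC, G)$. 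From the normality of $N$ in $\V(G)$, the subgroup $M$ inherits two key closure properties: (i) it is normal in $C(\cC, G)$ under pointwise $G$-conjugation, and (ii) it is invariant under the $\V$-action by pre-composition, realised by conjugation by the canonical lifts $(\bar{h}, 1) \in \V(G)$ of elements $\bar{h} \in \V$.

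The core computation is the commutator identity: for any $n = (\bar{g}, f) \in N$ lifting some $\bar{g} \in \V$ and any $k \in \ker(\pi)$, the commutator $[n, k]$ lies automatically in $M$ and equals $(1,\, f \cdot (\bar{g} \cdot k) \cdot f^{-1} \cdot k^{-1})$ in $C(\cC, G) \rtimes \V$. Specialising to $k = g \cdot \chi_U$ for $g \in G$ and a standard clopen $U$ with $\bar{g}(U) \subsetneq U$, this commutator is supported on $U$ with value $f(x)\, g\, f(x)^{-1}\, g^{-1}$ on $\bar{g}(U)$ and value $g^{-1}$ on $U \setminus \bar{g}(U)$. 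Using the $\V$-invariance of $M$ to further manipulate the supports, and using the pointwise $G$-conjugation closure of $M$, one extracts atomic elements $g^{-1} \cdot \chi_V \in M$ for various standard clopens $V$. The transitivity of the $\V$-action on standard clopens then propagates these to any standard clopen, and the resulting elements generate all of $C(\cC, G)$, forcing $M = C(\cC, G)$ and hence $N = \V(G)$.

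The technical heart of the argument is the isolation step: disentangling $[n, k]$ from the interference of the pointwise $f$-conjugation so as to extract genuine single-support atomic elements, rather than the ``degree-zero'' combinations of opposite labels which naive cancellation produces. This requires exploiting both closure properties of $M$ simultaneously---invariance under $\V$ (allowing one to move, shrink, and rearrange supports via Thompson-style homeomorphisms) and normality under pointwise $G$-conjugation (allowing pointwise adjustments to values)---together with the freedom to vary the lift $n$ and the test function $k$. A key structural input underpinning this is that the $\V$-action on $C(\cC, G)$ admits no non-trivial $\V$-invariant locally constant quotient (since $\V$-orbits on $\cC$ are dense), which rules out any proper, $\V$-invariant, pointwise-$G$-conjugation-normal subgroup of $C(\cC, G)$ that contains the commutator outputs above.
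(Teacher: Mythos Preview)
The paper does not give its own proof of this statement; it is quoted from \cite[Theorem~2.1]{WuWuZhaoZhou2025} and used as a black box in the computation of the centre of $\V(G)$. So there is nothing in the paper to compare against, and I assess your argument on its own merits.

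Your strategy and computations are correct: the reduction via simplicity of $\V$, the identification $\V(G)\cong C(\cC,G)\rtimes\V$, the commutator formula $[n,k]=f\cdot(\bar g\cdot k)\cdot f^{-1}\cdot k^{-1}$, and the two closure properties of $M=N\cap\ker\pi$ (in fact $M$ is normal in all of $\V(G)$) are all fine. The genuine gap is exactly where you flag it. You identify the ``isolation step'' as the technical heart and describe in general terms what must happen, but you do not actually perform it, and the structural claim you offer as a substitute---that ``the $\V$-action on $C(\cC,G)$ admits no non-trivial $\V$-invariant locally constant quotient''---is neither clearly formulated nor evidently true (the constant functions already form a proper $\V$-invariant subgroup, so nontrivial $\V$-invariant quotients certainly exist). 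Appealing to density of $\V$-orbits does not by itself rule out the proper $\V$-invariant normal subgroups you need to exclude.

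What is missing is a concrete manipulation, and one is available along the lines you hint at. With $m=[n,k]$ as you computed (value $g^{-1}$ on $V:=U\setminus\bar g(U)$, some $f$-dependent junk on $\bar g(U)$, and $e$ elsewhere), choose $\bar h\in\V$ fixing $\bar g(U)$ pointwise and satisfying $\bar h(V)\subsetneq V$; such $\bar h$ exist since $U\neq\cC$. Then $m\cdot(\bar h\cdot m)^{-1}\in M$ equals $g^{-1}\chi_{V\setminus\bar h(V)}$: the contributions on $\bar g(U)$ cancel because $\bar h$ is the identity there, and on $\bar h(V)\subseteq V$ one gets $g^{-1}\cdot g=e$. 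This is a genuine single-support atom. From here your closing argument (transitivity of $\V$ on proper nonempty clopens, arbitrariness of $g\in G$, atoms generate $C(\cC,G)$) goes through. Without this or an equivalent explicit step, the proposal is a correct outline rather than a proof.
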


Let us now consider the embedding $\iota_\varnothing \colon G \hookrightarrow \V(G)$ given by sending $g\in G$ to $[\{\varnothing\},((g),\mathrm{id}),\{\varnothing\}]$, where $\varnothing$ denotes the empty word. Let $Z(H)$ denote the centre of a group $H$.

\begin{prop}
\label{prop:centre-VG}
The centre of $\V(G)$ is $Z(\iota_\varnothing(G))$, in particular it is isomorphic to $Z(G)$.
\end{prop}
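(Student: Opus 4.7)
The plan is to combine Theorem \ref{thm:VG-normal-subgroup} with two short wreath-product commutator calculations. First, since the forgetful map $\pi\colon \V(G) \twoheadrightarrow \V$ surjects onto the non-abelian Thompson group $\V$, the group $\V(G)$ is itself non-abelian, and so $Z(\V(G))$ is a \emph{proper} normal subgroup. Theorem \ref{thm:VG-normal-subgroup} therefore yields $Z(\V(G)) \subseteq \ker(\pi)$.

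Second, I would describe elements of $\ker(\pi)$ concretely. A $G$-table $[W',((g_1,\ldots,g_n),\sigma),W]$ lies in $\ker(\pi)$ exactly when its underlying prefix-substitution homeomorphism is the identity on $\{0,1\}^\bN$, which forces $W' = W$ and $\sigma = \mathrm{id}$. By $G$-expansion we may further assume that $W = \{0,1\}^k$ for some $k \geq 0$, so that $n = 2^k$. For each transposition $(ij) \in \mathrm{S}_n$, the prefix-swap $\tau_{ij} := [W,((e,\ldots,e),(ij)),W]$ lies in $\V(G)$, and a straightforward calculation inside $G \wr \mathrm{S}_n$ gives
\[
z\,\tau_{ij} = [W,((g_1,\ldots,g_n),(ij)),W], \qquad \tau_{ij}\,z = [W,((g'_1,\ldots,g'_n),(ij)),W],
\]
where $(g'_1,\ldots,g'_n)$ is obtained from $(g_1,\ldots,g_n)$ by swapping its $i$th and $j$th entries. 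If $z$ is central, comparing these over all $(ij)$ forces $g_1 = \cdots = g_n =: g$, and then repeated $G$-reduction identifies $z$ with $[\{\varnothing\},((g),\mathrm{id}),\{\varnothing\}] = \iota_\varnothing(g)$.

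Finally, I would check the converse direction by computing that $\iota_\varnothing(g)$ commutes with every element of $\V(G)$ if and only if $g \in Z(G)$. After expanding $\iota_\varnothing(g)$ to $[\{0,1\},((g,g),\mathrm{id}),\{0,1\}]$, comparison with $\iota_0(h) = [\{0,1\}, ((h,e),\mathrm{id}), \{0,1\}]$ yields
\[
\iota_\varnothing(g)\,\iota_0(h) = [\{0,1\},((gh,g),\mathrm{id}),\{0,1\}], \qquad \iota_0(h)\,\iota_\varnothing(g) = [\{0,1\},((hg,g),\mathrm{id}),\{0,1\}],
\]
which agree for every $h \in G$ precisely when $g \in Z(G)$; a similar direct computation shows that, conversely, $\iota_\varnothing(g)$ with $g \in Z(G)$ commutes with every $G$-table and is therefore central. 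Since $\iota_\varnothing$ is visibly injective, we conclude $Z(\V(G)) = \iota_\varnothing(Z(G)) \cong Z(G)$. No serious obstacle is anticipated: the essential input is Theorem \ref{thm:VG-normal-subgroup}, and everything else amounts to elementary book-keeping inside the wreath product $G \wr \mathrm{S}_n$, the one subtle point being the reduction to a partition set $W$ consisting of all words of a common length.
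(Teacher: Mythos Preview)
Your proposal is correct and follows essentially the same approach as the paper: both invoke Theorem \ref{thm:VG-normal-subgroup} to force $Z(\V(G))\subseteq\ker(\pi)$, normalise any central element to a $G$-table over $W=\{0,1\}^k$ with trivial permutation, and then use commutation with transpositions and with copies of $G$ to pin down the labels. The only cosmetic difference is ordering: the paper first uses the subgroup $G_n\cong G^{2^k}$ to get each label into $Z(G)$ and then transpositions to equalise them, whereas you equalise first via transpositions and then test against $\iota_0(h)$ to force $g\in Z(G)$; both routes are equally short.
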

\begin{proof}
Let us first show that $Z(\iota_\varnothing(G))\subseteq Z(\V(G))$. Let $[W', \alpha, W]$ be an element in $\V(G)$ where $W,W'$ are partition sets and $\alpha = ((g_1,\ldots,g_n), \sigma)\in G\wr \mathrm{S}_n$. For any $z\in Z(G)$, we have: 
\begin{align*}
[\{\varnothing\},((z),\mathrm{id}),\{\varnothing\}] [W', \alpha, W] [\{\varnothing\}, & ((z),\mathrm{id}),\{\varnothing\}]^{-1} = \\
       &=[W', ((z,\ldots, z),\mathrm{id})\alpha ((z^{-1},\ldots,z^{-1}),\mathrm{id}) ,W] \\
       &= [W', ((z,\ldots, z),\mathrm{id}) ((g_1,\ldots,g_n), \sigma)  ((z^{-1},\ldots,z^{-1}),\mathrm{id}) ,W] \\
       &=[W', ((z,\ldots, z),\mathrm{id}) ((g_1z^{-1},\ldots,g_nz^{-1}), \sigma),W]  \\
       & = [W', ((zg_1z^{-1},\ldots,zg_nz^{-1}), \sigma),W]  \\
       & = [W', ((g_1,\ldots,g_n), \sigma),W] ~~~(\text{since } z \in Z(G)) \\
       & = [W', \alpha, W].
\end{align*}
Thus $\iota_\varnothing(z) = [\{\varnothing\},((z),\mathrm{id}),\{\varnothing\}]$ lies in the centre of $\V(G)$.

We proceed to show that $Z(\iota_\varnothing(G))\supseteq Z(\V(G))$. Let $\pi \colon \V(G) \to \V$ be the homomorphism that forgets all the labels in $G$. By Theorem \ref{thm:VG-normal-subgroup}, $Z(\V(G))$ is a subgroup of $\mathrm{ker}{(\pi)}$. Hence any element $\zeta \in Z(\V(G))$ may be represented by a $G$-table of the form $(P_n,((z_1,z_2,\ldots, z_{2^n}),\mathrm{id}),P_n)$ for some $n\geq 1$ and some elements $z_i \in G$, where $P_n$ denotes the partition set of $\{0,1\}^\bN$ given by the set of all words of length exactly $n$. Note that $\lvert P_n \rvert = 2^n$. Let $G_n$ be the subgroup $\{[P_n,((g_1,g_2,\ldots, g_{2^n}),\mathrm{id}),P_n] \mid g_i\in G,1\leq i\leq 2^n \}$ of $\V(G)$. Then $G_n \cong G^{2^n}$ and the centre of $G_n$ is $(Z(G))^{2^n}$ under this identification. Since $\zeta$ lies in the centre of $\V(G)$, it commutes with every element of $G_n$. This implies that $z_i\in Z(G)$ for each $1\leq i\leq 2^n$. We next verify that $z_i = z_j$ for all $1\leq i,j\leq 2^n$. If this is not the case, we choose some $i\neq j$ such that $z_i \neq z_j$ and let $\sigma$ be the bijection of $P_n$ that transposes $i$ and $j$ and is the identity elsewhere. Then $[P_n,((e,e,\ldots, e),\sigma),P_n]$ conjugates $\zeta = [P_n,((z_1,z_2,\ldots, z_{2^n}),\mathrm{id}),P_n]$ to $[P_n,((z_{\sigma(1)},z_{\sigma(2)},\ldots, z_{\sigma(2^n)}),\mathrm{id}),P_n]$. Since $z_i\ne z_j$, we have
\[
[P_n,((z_1,z_2,\ldots, z_{2^n}),\mathrm{id}),P_n] \neq [P_n,((z_{\sigma(1)},z_{\sigma(2)},\ldots, z_{\sigma(2^n)}),\mathrm{id}),P_n]
\]
by the definition of $\sigma$, contradicting the assumption that $\zeta$ is in the centre of $\V(G)$. Thus we have shown that every element of the centre of $\V(G)$ is of the form $[P_n,((z,z,\ldots, z),\mathrm{id}),P_n]$ for some $z\in Z(G)$. By definition, $[P_n,((z,z,\ldots, z),\mathrm{id}),P_n] = [\{\varnothing\},((z),\mathrm{id}),\{\varnothing\}] = \iota_\varnothing(z)$. This completes the proof.
\end{proof}

\subsubsection*{Strong torsion generation.}
Finally, we observe that $\V(G)$ is strongly torsion generated.

\begin{prop}
\label{prop:torsion-VG}
For each $n\geq 2$, there is an element $g \in \V(G)$ of order $n$ that normally generates $\V(G)$. In other words, $\V(G)$ is strongly torsion generated.
\end{prop}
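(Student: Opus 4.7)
The plan is to exploit Theorem \ref{thm:VG-normal-subgroup}: since every proper normal subgroup of $\V(G)$ lies in $\ker(\pi)$, an element of $\V(G)$ normally generates the whole group as soon as its image under $\pi \colon \V(G) \to \V$ is nontrivial. Thus it suffices to exhibit, for each $n \geq 2$, an element of $\V(G)$ of order exactly $n$ that lies outside $\ker(\pi)$.

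Fix $n \geq 2$ and choose a partition set $W = \{w_1, \ldots, w_n\}$ of $\{0,1\}^\bN$ of cardinality exactly $n$. Such a $W$ exists for every $n \geq 1$: starting from $\{\varnothing\}$ and iteratively replacing some word $w \in W$ by the pair $w0, w1$, one increments the cardinality by one each time. Let $c = (1\,2\,\cdots\,n) \in \mathrm{S}_n$ be the standard $n$-cycle and set
\[
g := [W, ((e,\ldots,e), c), W] \in \V(G),
\]
where $e \in G$ denotes the identity. Because all labels of $g$ are trivial, the wreath-product multiplication in $G \wr \mathrm{S}_n$ gives $g^k = [W, ((e,\ldots,e), c^k), W]$ for every $k \geq 0$. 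Applying $\pi$ yields $\pi(g^k) = [W, c^k, W] \in \V$, which is the homeomorphism sending $w_i x \mapsto w_{c^k(i)} x$; this is the identity of $\V$ if and only if $c^k = \mathrm{id}$, i.e.\ if and only if $n \mid k$. Hence $g^k \neq 1$ in $\V(G)$ for $0 < k < n$, while $g^n = [W, ((e,\ldots,e), \mathrm{id}), W]$ $G$-reduces iteratively to $(\{\varnothing\}, \mathrm{id}, \{\varnothing\})$, which is the identity of $\V(G)$. Thus $g$ has order exactly $n$, and since $\pi(g) \neq 1$, the element $g$ lies outside $\ker(\pi)$ and therefore normally generates $\V(G)$ by the opening observation.

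No step presents a serious obstacle: the construction above reduces both the order computation and the nontriviality of $\pi(g)$ to transparent facts about a cyclic permutation in the untwisted Thompson group $\V$, and the passage from ``not in $\ker(\pi)$'' to ``normally generates $\V(G)$'' is supplied directly by Theorem \ref{thm:VG-normal-subgroup}. The one minor verification worth making explicit is that $g^n$ does indeed reduce to the identity, which follows immediately by undoing the $G$-expansions used to construct $W$ from $\{\varnothing\}$.
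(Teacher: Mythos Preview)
Your proof is correct and follows essentially the same approach as the paper's: take an element of order $n$ in $\V$, lift it to $\V(G)$ with trivial labels, and invoke Theorem \ref{thm:VG-normal-subgroup} to conclude that its normal closure, not being contained in $\ker(\pi)$, must be all of $\V(G)$. The only difference is that the paper simply asserts the existence of an order-$n$ element in $\V$, whereas you explicitly construct one as a cyclic permutation of an $n$-block partition and verify both its order in $\V(G)$ and its nontriviality under $\pi$.
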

\begin{proof}
Let $g \in \V$ be an element of order $n$, and consider it as an element of $\V(G)$ by choosing all labels to be the identity element of $G$. We must show that its normal closure $\langle g \rangle^{\V(G)}$ is all of $\V(G)$. But this follows from Theorem \ref{thm:VG-normal-subgroup}, since $\langle g \rangle^{\V(G)}$ does not lie in the kernel of $\pi \colon \V(G) \to \V$.
\end{proof}

\subsection{Twisted Brin--Thompson groups}
\label{subsection:twisted-BT}

In this subsection, we construct topological groupoids whose corresponding topological full groups are the twisted Brin--Thompson groups. We then apply Theorem \ref{thm:homgy-groupoid-fullgroup} to show that all twisted Brin--Thompson groups are acyclic. We begin with a review of the basics of twisted Brin--Thompson groups; for further details, see \cite{BelkZaremsky22}.

\subsubsection*{Brin--Thompson groups.}

Recall that $\cC = \{0,1\}^{\bN}$ is the the space of infinite sequences in the alphabet $\{0,1\}$ equipped with the product topology. For any set $S$, its associated \emph{Cantor cube} $\cC^{S}$ is the space $\prod_{s \in S} \cC$, again equipped with the product topology, whose elements are all functions $S \rightarrow \cC$. Recall that $\{0,1\}^\ast$ denotes the set of finite words in the alphabet $\{0,1\}$. We say that a function $\psi \colon S \rightarrow \{0,1\}^{\ast}$ has \emph{finite support} if $\psi(s) \in \{0,1\}^{\ast}$ is the empty word for all but finitely many $s \in S$. Given such a function $\psi$, its associated \emph{dyadic brick} in $\cC^S$ is the subset
\[
B(\psi) = \left\{ \kappa \in \cC^{S} \mid \psi(s) \text { is a prefix of } \kappa(s) \text { for each } s \in S \right\} .
\]
For example, $\cC^S$ itself is the dyadic brick associated to the function $S \to \{0,1\}^\ast$ that sends every element to the empty word. Note that there is a canonical homeomorphism $h_{\psi} \colon \cC^{S} \to B(\psi)$ defined by
\[
h_{\psi}(\kappa)(s) = \psi(s) \cdot \kappa(s),
\]
where $\cdot$ denotes concatenation of words. More generally, if $B(\varphi)$ and $B(\psi)$ are dyadic bricks, we refer to the composition $h_{\psi} \circ h_{\varphi}^{-1}$ as the \emph{canonical homeomorphism} from $B(\varphi)$ to $B(\psi)$.

Given any two partitions $B(\varphi_{1}), \ldots, B(\varphi_{n})$ and $B(\psi_{1}), \ldots, B(\psi_{n})$ of $\cC^{S}$ into the same number of dyadic bricks, we can define a homeomorphism $\cC^{S} \rightarrow \cC^{S}$ by sending each $B(\varphi_{i})$ to the corresponding $B(\psi_{i})$ by the canonical homeomorphism. The group of all homeomorphisms of $\cC^{S}$ defined in this way is called the \emph{Brin--Thompson group $S\mathrm{V}$}. When $S$ is finite, this group was first defined by Brin \cite{Brin04}.

\subsubsection*{Twisted Brin--Thompson groups.}

Now let $G$ be a group acting faithfully on a countable set $S$. For each element $\gamma \in G$, let $\tau_{\gamma} \colon \cC^{S} \rightarrow \cC^{S}$ be the \emph{twist homeomorphism} that permutes the coordinates of points in $\cC^{S}$ according to $\gamma$. In other words, $\tau_{\gamma}$ is the homeomorphism of $\cC^{S}$ defined by
\begin{equation}
\label{eq:tau-gamma}
\tau_{\gamma}(\kappa)(s) = \kappa (\gamma^{-1} s)
\end{equation}
for all $\kappa \in \cC^{S}$ and $s \in S$. 

In general, given any two dyadic bricks $B(\varphi)$ and $B(\psi)$ and an element $\gamma \in G$, we define the associated twist homeomorphism $B(\varphi) \to B(\psi)$ to be the composition $h_{\psi} \circ \tau_{\gamma} \circ h_{\varphi}^{-1}$ (writing composition as usual from right to left), where $h_{\varphi} \colon \cC^{S} \to B(\varphi)$ and $h_{\psi} \colon \cC^{S} \to B(\psi)$ are the canonical homeomorphisms. We call this the \emph{canonical twist homeomorphism} associated to $\gamma$ between these dyadic bricks.

Given any two partitions $\{B(\varphi_{1}), \ldots, B(\varphi_{n})\}$ and $\{B(\psi_{1}), \ldots, B(\psi_{n})\}$ of $\cC^{S}$ into the same number of dyadic bricks, together with elements $\gamma_{1}, \ldots, \gamma_{n} \in G$, we can define a homeomorphism of $\cC^{S}$ by mapping each $B(\varphi_{i})$ to $B(\psi_{i})$ via the canonical twist homeomorphism associated to $\gamma_{i}$. For this homeomorphism, $\{B(\varphi_{1}), \ldots, B(\varphi_{n})\}$ is called the \emph{domain partition} and $\{B(\psi_{1}), \ldots, B(\psi_{n})\}$ is called the \emph{image partition}. The group of all homeomorphisms of $\cC^{S}$ constructed in this way forms the \emph{twisted Brin--Thompson group} $S\mathrm{V}_G$. The key property of $S\mathrm{V}_G$ is that it is always simple \cite[Theorem 3.4]{BelkZaremsky22}.

\subsubsection*{Topological groupoids associated to Brin--Thompson groups.}

Recall from Example \ref{ex:SFT} and Lemma \ref{lem:groupdV2-full-V} that $\mathcal{V}_2$ is the topological groupoid associated to the Thompson group $\V$. We now define a topological groupoid associated to the Brin--Thompson group $S\V$.

\begin{defn}
\label{defn:groupoid-svg}
Let $S\mathcal{V}_2$ be the topological groupoid with:
\begin{enumerate}
\item Unit space: $\prod_S \mathcal{V}_2^{(0)}$ equipped with the product topology.
\item Morphism space: $\left\lbrace (\gamma_s)_{s\in S} \in \prod_S \mathcal{V}_2 \bigm| \gamma_s \in \mathcal{V}^{(0)}_2 \text{ for all but finitely many } s \right\rbrace$ equipped with the subspace topology induced from the product topology on $\prod_S \cV_2$. A basis for this topology is given by $\left\lbrace \prod_{s \in S} U_s \bigm| U_s \subseteq \cV_2 \text{ is open and } U_s = \mathcal{V}^{(0)}_2 \text{ for all but finitely many } s \right\rbrace$.
\end{enumerate}
We note that, since $\cV_2^{(0)} \cong \cC$ and $S$ is countable, the unit space of $S\cV_2$ is homeomorphic to $\cC^S$, which is homeomorphic to $\cC$, so it is locally compact and Hausdorff. Thus $S\cV_2$ is indeed a topological groupoid according to our convention in Definition \ref{def:top-gropuoid}.
\end{defn}

If $S$ is finite, this is just the product groupoid $\prod_S \cV_2$, whereas if $S$ is infinite, it is a proper subgroupoid of $\prod_S \cV_2$. Let us denote the range and source maps of the copy of $\cV_2$ in the $s$ coordinate by $r_s$ and $s_s$. Then the range and source maps $r$ and $s$ of $S\cV_2$ are given by:
\[
r = (r_s)_{s\in S} \quad\text{and}\quad s = (s_s)_{s\in S}.
\]
Since $\cV_2$ is étale, the maps $r_s$ and $s_s$ are local homeomorphisms for each $s \in S$. Using this, we may deduce the following.

\begin{lem}
\label{lem:SV2-etale}
The topological groupoid $S\cV_2$ is ample, in particular it is étale.
\end{lem}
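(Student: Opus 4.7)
The plan is to prove étaleness first, then deduce ampleness from the fact that the unit space is totally disconnected. Since ample = étale + totally disconnected unit space, and since the unit space $(S\cV_2)^{(0)} = \prod_S \cV_2^{(0)}$ is already stated to be homeomorphic to the Cantor space $\cC$ (hence totally disconnected), the only real content is to verify that the range and source maps of $S\cV_2$ are local homeomorphisms.

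First, I would unpack the structure maps. By the coordinatewise definition, we have $r = (r_s)_{s\in S}$ and $s = (s_s)_{s\in S}$, where each $r_s,s_s$ is the range or source map in a coordinate copy of $\cV_2$. Since $\cV_2$ is étale (Example \ref{ex:SFT}), each $r_s$ and $s_s$ is a local homeomorphism. Moreover, $\cV_2^{(0)}$ is itself an open bisection of $\cV_2$ on which $r_s$ and $s_s$ restrict to the identity.

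Next, I would produce explicit open bisections covering $S\cV_2$. Given $\gamma = (\gamma_s)_{s\in S}$ in the morphism space, let $F \subseteq S$ be the finite set on which $\gamma_s \notin \cV_2^{(0)}$. For each $s \in F$ pick an open bisection $U_s \subseteq \cV_2$ of $\cV_2$ containing $\gamma_s$ (this is possible by étaleness of $\cV_2$), and for $s \notin F$ set $U_s = \cV_2^{(0)}$. Then $U := \prod_{s\in S} U_s$ is a basic open neighborhood of $\gamma$ in $S\cV_2$ in the sense of Definition \ref{defn:groupoid-svg}. On $U$, the map $r$ is the product of the homeomorphisms $r_s|_{U_s} \colon U_s \xrightarrow{\cong} r_s(U_s)$, so it is a homeomorphism onto $\prod_{s\in S} r_s(U_s) \subseteq (S\cV_2)^{(0)}$ for the product topologies, and these agree with the relevant subspace topologies inherited from $\prod_S \cV_2$ and $\prod_S \cV_2^{(0)}$. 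The same argument works for $s$. Hence $r$ and $s$ are local homeomorphisms, so $S\cV_2$ is étale.

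Finally, since $S$ is countable and $\cV_2^{(0)} \cong \cC$, we have $(S\cV_2)^{(0)} \cong \cC^S \cong \cC$, which is totally disconnected. Combined with étaleness, this gives that $S\cV_2$ is ample (Definition \ref{def:ample}). I don't expect any substantial obstacle here; the only small subtlety is to be careful that the ``basic open'' sets are genuinely open in the subspace topology of $S\cV_2$ inside $\prod_S \cV_2$, which is immediate from the description of the basis given in Definition \ref{defn:groupoid-svg}, and that the product-of-homeomorphisms argument transfers cleanly to the subspace level, which it does because both sides sit inside the relevant product topologies.
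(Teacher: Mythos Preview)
Your proposal is correct and follows essentially the same approach as the paper's proof: pick a basic open neighbourhood $U = \prod_{s\in S} U_s$ of a given point by choosing open bisections $U_s$ in the finitely many non-unit coordinates and $U_s = \cV_2^{(0)}$ elsewhere, then observe that $r|_U$ and $s|_U$ are coordinatewise products of homeomorphisms onto open images. The paper makes slightly more explicit the verification that $r(U) = \prod_s r_s(U_s)$ is open in $(S\cV_2)^{(0)}$ (because all but finitely many factors equal $\cV_2^{(0)}$), which you have folded into your remark about product topologies; otherwise the arguments coincide.
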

\begin{proof}
We must check that the range and source maps of $S\cV_2$ are local homeomorphisms, and that its unit space is totally disconnected. For the second condition, we simply recall from above that its unit space is homeomorphic to the Cantor space $\cC$ (since any countable product of copies of $\cC$ is homeomorphic to $\cC$), which is totally disconnected.

For the first condition, we fix a point $\gamma = (\gamma_s)_{s\in S}$ in $S\cV_2$ and must find an open neighbourhood $U$ of $\gamma$ in $S\cV_2$ such that $r(U)$ and $s(U)$ are open in $(S\cV_2)^{(0)}$ and the maps $r|_U \colon U \to r(U)$ and $s|_U \colon U \to s(U)$ are homeomorphisms. There are only finitely many coordinates $s \in S$ for which $\gamma_s \notin \cV_2^{(0)}$; for these values of $s$ let us choose any open neighbourhood $U_s$ of $\gamma_s$ in $\cV_2$ such that $r_s(U_s)$ and $s_s(U_s)$ are open in $\cV_2^{(0)}$ and the maps $r_s|_{U_s} \colon U_s \to r_s(U_s)$ and $s_s|_{U_s} \colon U_s \to s_s(U_s)$ are homeomorphisms; such neighbourhoods exist since we know that $r_s$ and $s_s$ are local homeomorphisms. For the other coordinates $s \in S$, where we have $\gamma_s \in \cV_2^{(0)}$, let us choose $U_s = \cV_2^{(0)}$. In this case it is also true that the restrictions of $r_s$ and $s_s$ to $U_s$ are homeomorphisms onto their images, since the source and range maps of any topological groupoid, restricted to the unit space, are simply the identity map of the unit space. In particular we have $r_s(U_s) = s_s(U_s) = \cV_2^{(0)}$ in this case.

We now take $U = \prod_{s\in S} U_s$ and note that this is an open neighbourhood of $\gamma$ in $S\cV_2$ (indeed, it is a basic open set of the form mentioned in Definition \ref{defn:groupoid-svg}). Noting that $r|_U = \prod_{s\in S} r_s|_{U_s}$ and similarly for $s|_U$, it follows that $r|_U$ and $s|_U$ are both homeomorphisms onto their images. Finally, we must check that $r(U)$ and $s(U)$ are open in $(S\cV_2)^{(0)} = \prod_S \cV_2^{(0)}$. We have $r(U) = \prod_{s\in S} r_s(U_s)$ with all but finitely many of the $r_s(U_s)$ equal to $\cV_2^{(0)}$ and each $r_s(U_s)$ open in $\cV_2^{(0)}$, so $r(U)$ is open in the product topology on $\prod_S \cV_2^{(0)}$. Exactly the same argument shows that $s(U)$ is also open in the product topology.
\end{proof}

\begin{rem}
If we took $S\cV_2$ to be simply the product groupoid $\prod_S \cV_2$, the proof of the lemma above would not work, since an infinite product of local homeomorphisms need not be a local homeomorphism in general.

Alternatively, one could wonder whether we could instead take the product $\prod_S \cV_2$ in the box topology instead of the product topology. In this case it would be true that the source and range maps are local homeomorphisms, because arbitrary box products of local homeomorphisms are local homeomorphisms. However, the unit space of $\prod_S \cV_2$ in the box topology would be $\prod_S \cC$ in the box topology (where $\cC$ is the Cantor space). In the case where $S$ is (countably) infinite, this space is not locally compact, by \cite[Theorem 1.3]{Williams1984}, so it cannot be the unit space of a topological groupoid, by our convention in Definition \ref{def:top-gropuoid}.
\end{rem}

Since $S\cV_2$ is ample, it admits a basis for its topology consisting of compact open bisections (see the paragraph just after Definition \ref{def:ample}). In fact, we may describe such a basis as follows. Let $F \subseteq S$ be a finite subset, let $U_s \subset \cV_2$ be any compact open bisection for $s\in F$ and set $U_s = \cV_2^{(0)}$ for $s \in S \smallsetminus F$. Then $\prod_{s \in S} U_s$ is a compact open bisection in $S\mathcal{V}_2$. Since the topology of $\cV_2$ admits a basis consisting of compact open bisections (because it is ample), the compact open bisections of $S\cV_2$ constructed in this way provides a basis for its topology.

\begin{lem}
\label{lem:SV2-is-effective}
The ample groupoid $S\cV_2$ is effective.
\end{lem}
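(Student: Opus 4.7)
The plan is to reduce effectivity of $S\cV_2$ to the effectivity of $\cV_2$ itself, which was established in the proof of Lemma \ref{lem:groupdV2-full-V}. We must show that the interior of the isotropy subgroupoid $\mathrm{Iso}(S\cV_2) = \{\gamma \in S\cV_2 \mid r(\gamma) = s(\gamma)\}$ is contained in $(S\cV_2)^{(0)}$; the reverse inclusion is automatic from the general fact noted in Remark immediately following Definition \ref{def:effective}.

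First, I would unwind the isotropy condition coordinatewise: since the range and source maps of $S\cV_2$ are given by $r = (r_s)_{s \in S}$ and $s = (s_s)_{s \in S}$, an element $\gamma = (\gamma_s)_{s \in S}$ lies in $\mathrm{Iso}(S\cV_2)$ if and only if $\gamma_s \in \mathrm{Iso}(\cV_2)$ for every $s \in S$. Next, let $W$ be any open subset of $S\cV_2$ contained in $\mathrm{Iso}(S\cV_2)$. Using the basis for the topology described just before the statement of the lemma, it suffices to show that every non-empty basic open set $U = \prod_{s \in S} U_s$ contained in $\mathrm{Iso}(S\cV_2)$ lies in $(S\cV_2)^{(0)}$, where $U_s \subseteq \cV_2$ is open and equals $\cV_2^{(0)}$ for all but finitely many $s$ in some finite set $F \subseteq S$.

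For a fixed index $s_0 \in S$ and any $\gamma_{s_0} \in U_{s_0}$, one can build an element of $U$ by choosing, for each $s \neq s_0$, an arbitrary $\gamma_s \in U_s$ (this is possible since each $U_s$ is non-empty as $U$ is non-empty). Since $U \subseteq \mathrm{Iso}(S\cV_2)$, this element lies in the isotropy and so its $s_0$-component $\gamma_{s_0}$ lies in $\mathrm{Iso}(\cV_2)$. As $\gamma_{s_0} \in U_{s_0}$ was arbitrary, we conclude $U_{s_0} \subseteq \mathrm{Iso}(\cV_2)$. Now $U_{s_0}$ is open in $\cV_2$ and contained in $\mathrm{Iso}(\cV_2)$, so by effectivity of $\cV_2$ (proved in Lemma \ref{lem:groupdV2-full-V}) we obtain $U_{s_0} \subseteq \cV_2^{(0)}$. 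Since $s_0$ was arbitrary, every factor $U_s$ of $U$ is contained in $\cV_2^{(0)}$, hence $U \subseteq \prod_{s \in S} \cV_2^{(0)} = (S\cV_2)^{(0)}$, as required.

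I do not expect any serious obstacle here: the only delicate point is making sure that the reduction to single coordinates really works despite the constraint that almost all coordinates of elements of $S\cV_2$ must lie in $\cV_2^{(0)}$, but this constraint is automatically compatible with the basic open sets defined in Definition \ref{defn:groupoid-svg}, so the coordinatewise argument goes through without modification.
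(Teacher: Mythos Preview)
Your argument is correct and, in fact, cleaner than the paper's. The paper proceeds differently: it splits $S\cV_2$ as a disjoint union of clopen pieces $S\cV_2[m]$ indexed by finitely supported functions $m \colon S \to \bZ$, identifies $\mathrm{Iso}(S\cV_2)[0]$ with the unit space, and then, for each nonzero $m$, argues by a cardinality count (projecting to a coordinate $s_0$ with $m_{s_0} \neq 0$ and using that eventually periodic sequences are countable while non-empty open subsets of $\cV_2$ are uncountable). Your approach instead reduces directly to the effectivity of $\cV_2$ by exploiting the fact that the basic open sets of $S\cV_2$ from Definition~\ref{defn:groupoid-svg} are genuine products $\prod_s U_s$ with cofinitely many factors equal to $\cV_2^{(0)}$, so a coordinatewise projection argument is legitimate.

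It is worth noting that the paper includes a remark (Remark~\ref{rmk:SV2-effective}) cautioning that one cannot ``simply formally deduce'' effectivity of $S\cV_2$ from that of $\cV_2$, because taking interiors does not commute with passing to the subspace $S\cV_2 \subset \prod_S \cV_2$. Your argument does not fall into that trap: you are not computing an interior in the ambient product and then intersecting with $S\cV_2$; rather, you work entirely inside $S\cV_2$ using its own basis of product-type open sets, and the product structure of \emph{these} basic sets (not of arbitrary open sets in the ambient product) is what makes the projection to a single coordinate valid. So your argument is precisely the kind of ``non-formal'' reduction that the remark does not rule out, and it buys a shorter and more conceptual proof than the paper's countability argument.
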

\begin{proof}
We adapt to the setting of $S\cV_2$ the argument from the proof of Lemma \ref{lem:groupdV2-full-V} of the fact that $\cV_2$ is effective. (The fact that $S\cV_2$ is effective does not follow formally from the fact that $\cV_2$ is effective, as explained in Remark \ref{rmk:SV2-effective} below.)

Recall that elements of $S\cV_2$ are collections of triples $(y_s,n_s,x_s)_{s \in S}$ where we have $x_s,y_s \in \{0,1\}^\bN = \cC$, $n_s \in \bZ$, the infinite words $x_s$ and $y_s$ are eventually equal after a relative shift by $n_s$, and for all but finitely many indexing elements $s \in S$ we have $(y_s,n_s,x_s) = (x_s,0,x_s)$. This is topologised as a subspace of the product space $(\cC \times \bZ \times \cC)^S = \cC^S \times \bZ^S \times \cC^S$. In fact, since all but finitely many of the $n_s$ are zero, it is a subspace of the product space $\cC^S \times \bZ^* \times \cC^S$, where $\bZ^*$ denotes the space of functions $\mathrm{Fun}_f(S,\bZ)$ of finite support, i.e., that send all but finitely many elements of $S$ to zero. Although $\bZ^S$ is not discrete unless $S$ is finite (when $S$ is countably infinite it is homeomorphic to the Baire space), its subspace $\bZ^*$ is discrete, and so $S\cV_2$ naturally splits as the disjoint union of the clopen subspaces
\[
S\cV_2[m] = \{ (y_s,n_s,x_s)_{s \in S} \in S\cV_2 \mid n_s = m_s \text{ for all } s \in S \}
\]
indexed by $m = (m_s)_{s \in S} \in \bZ^*$. The isotropy subgroupoid of $S\cV_2$ is
\[
\mathrm{Iso}(S\cV_2) = \{ (y_s,n_s,x_s)_{s \in S} \in S\cV_2 \mid y_s = x_s \text{ for all } s \in S \} ,
\]
and the splitting of $S\cV_2$ induces a splitting of $\mathrm{Iso}(S\cV_2)$ as the disjoint union of the clopen subspaces
\[
\mathrm{Iso}(S\cV_2)[m] = S\cV_2[m] \cap \mathrm{Iso}(S\cV_2),
\]
again indexed by $m = (m_s)_{s \in S} \in \bZ^*$.

We must prove that the interior of $\mathrm{Iso}(S\cV_2)$ in $S\cV_2$ is equal to the unit space $(S\cV_2)^{(0)}$, which is:
\[
(S\cV_2)^{(0)} = \{ (y_s,n_s,x_s)_{s \in S} \in S\cV_2 \mid y_s = x_s \text{ and } n_s = 0 \text{ for all } s \in S \} .
\]
The splittings allow us to compute the interior component-wise. It is clear from the descriptions above that $(S\cV_2)^{(0)} = \mathrm{Iso}(S\cV_2)[0]$, where $0 \in \bZ^*$ denotes the identically-zero function $S \to \bZ$. Since the unit space $(S\cV_2)^{(0)}$ is always an open subspace in any étale groupoid, it follows that the interior of $\mathrm{Iso}(S\cV_2)[0]$ in $S\cV_2[0]$ is precisely $(S\cV_2)^{(0)}$. It therefore remains to prove for any non-zero $m \in \bZ^*$ that $\mathrm{Iso}(S\cV_2)[m]$, as a subspace of $S\cV_2[m]$, has empty interior.

Let us therefore fix any non-zero $m \in \bZ^*$ and choose an element $s_0 \in S$ so that $m_{s_0} \neq 0 \in \bZ$. Suppose that $U$ is an open subset of $S\cV_2[m]$ that is contained in $\mathrm{Iso}(S\cV_2)[m]$; our task is to prove that $U$ is empty. For this purpose, we may assume that $U$ is a basic open subset, so we may assume that it is of the form $U = \prod_{s \in S} U_s$ for open subspaces $U_s \subseteq \cV_2$. (We also have that $U_s = \cV_2^{(0)}$ for all but finitely many $s \in S$, but we will not need this.) We will prove that $U_{s_0}$ is empty, which will imply that $U$ is empty.

To show that $U_{s_0}$ is empty, we first recall from the proof of Lemma \ref{lem:groupdV2-full-V} the following property of the topology of $\cV_2$: every open subset of $\cV_2$ is either empty or uncountable. This follows from the fact that every open subset of the Cantor space $\cC$ is either empty or uncountable, since $\cV_2$ is locally homeomorphic to $\cC$. Thus it will suffice to prove that $U_{s_0}$ is countable. To see this, note that the elements of $U_{s_0}$ are all of the form $(x,m_{s_0},x)$ for $x \in \cC = \{0,1\}^\bN$, since $U$ is contained in $\mathrm{Iso}(S\cV_2)[m]$. Since $m_{s_0} \neq 0$, the infinite word $x$ must be eventually $m_{s_0}$-periodic. Thus $(x,m_{s_0},x) \mapsto x$ is an injection of $U_{s_0}$ into the set of eventually $m_{s_0}$-periodic infinite words in $\{0,1\}$. But there are only countably many such words.
\end{proof}

\begin{rem}
\label{rmk:SV2-effective}
One cannot simply formally deduce effectiveness of $S\cV_2$ from effectiveness of $\cV_2$ in the case when $S$ is infinite. Directly from the definitions, we see that $\mathrm{Iso}(S\cV_2) = S\cV_2 \cap \left( \prod_S \mathrm{Iso}(\cV_2) \right)$, where the intersection is taken inside the product $\prod_S \cV_2$. Since $\cV_2$ is effective, we know that the interior of $\mathrm{Iso}(\cV_2)$ in $\cV_2$ is equal to $\cV_2^{(0)}$, which implies by properties of the product topology that the interior of $\prod_S \mathrm{Iso}(\cV_2)$ in $\prod_S \cV_2$ is equal to $\prod_S \cV_2^{(0)}$. Let us temporarily write $X = \prod_S \cV_2$, $A = S\cV_2$, $B = \prod_S \mathrm{Iso}(\cV_2)$ and $C = \prod_S \cV_2^{(0)}$. In this notation, we have $\mathrm{Int}_X(B) = C$, and to prove effectiveness of $S\cV_2$ we would need to deduce that $\mathrm{Int}_A(A \cap B) = C$. However, in general we have $\mathrm{Int}_A(A\cap B) = A \cap \mathrm{Int}_X(B \cup (X \smallsetminus A))$, which may be larger than $A \cap (\mathrm{Int}_X(B) \cup \mathrm{Int}_X(X \smallsetminus A)) = A \cap \mathrm{Int}_X(B) = A \cap C = C$.
\end{rem}

\subsubsection*{Topological groupoids associated to twisted Brin--Thompson groups.}

Let us return to the setting where we have a group $G$ with a faithful action on a countable set $S$ given by a homomorphism $\alpha \colon G \to \mathrm{Sym}(S)$. The action $\alpha$ induces a left action of $G$ on the topological groupoid $S\cV_2$ by permuting the coordinates. Precisely, viewing elements of $S\cV_2$ as functions $S \to \cV_2$, this action is given by pre-composing with the action $\alpha$ of $G$ on $S$:
\begin{equation}
\label{eq:induced-action}
g \cdot (\gamma \colon S \to \cV_2) = (\gamma \circ \alpha(g^{-1}) \colon S \to \cV_2) .
\end{equation}
(The ${}^{-1}$ is needed on the right-hand side in order for this to be a left action, since pre-composition is a right action.) The formula \eqref{eq:induced-action} for the action may be rewritten in coordinates as:
\begin{equation}
\label{eq:induced-action-coordinates}
g \cdot (\gamma_s)_{s \in S} = (\gamma_{g^{-1}s})_{s \in S} .
\end{equation}
Here, on the right-hand side, we write the left action of an element $h \in G$ on $s \in S$ simply by concatenation as $hs$ rather than $\alpha(h)(s)$. Thus, by Example \ref{ex:semi-direct-product}, we may form the semi-direct product topological groupoid $S\mathcal{V}_2 \rtimes G$.

\begin{rem}
A priori, there is no reason to take the topological groupoid to be $\cV_2$ in the construction. In fact, given a faithful action of $G$ on a countable set $S$, and any topological groupoid $\cG$ with compact unit space, the definition of the twisted topological groupoid $S\cG\rtimes G$ still makes sense. Further properties of these groupoids and their full groups will be studied in \cite{WuZhao25+}.
\end{rem}

\begin{lem}
\label{lem:svg-ample-eff}
The topological groupoid $S\mathcal{V}_2 \rtimes {G}$ is ample and effective and its unit space does not have isolated points.
\end{lem}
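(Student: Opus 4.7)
The plan is to verify the three claims — ampleness, lack of isolated points in the unit space, and effectiveness — in order of increasing difficulty. By Example \ref{ex:semi-direct-product}, the unit space of $S\cV_2 \rtimes G$ is canonically homeomorphic to $(S\cV_2)^{(0)}$, which, since $S$ is countable, is homeomorphic to the standard Cantor space $\cC$. This space is totally disconnected and has no isolated points. The same example, combined with Lemma \ref{lem:SV2-etale}, also shows that $S\cV_2 \rtimes G$ is ample.

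For effectiveness, the strategy is to decompose along the natural projection $\tilde{f} \colon S\cV_2 \rtimes G \to G$ of Example \ref{ex:semi-direct-product}. Since $G$ is discrete, this gives clopen fibres $S\cV_2 \times \{h\}$ indexed by $h \in G$, and the interior of the isotropy subgroupoid in $S\cV_2 \rtimes G$ is consequently the disjoint union of the interiors of its intersections with these clopen pieces. For $h = e$, the corresponding piece is a clopen subgroupoid canonically isomorphic to $S\cV_2$ itself, so the relevant interior equals the unit space $(S\cV_2)^{(0)}$ by effectiveness of $S\cV_2$ (Lemma \ref{lem:SV2-is-effective}). It therefore remains to show, for each $h \neq e$, that the intersection of the isotropy with $S\cV_2 \times \{h\}$ has empty interior.

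This is the main obstacle. Unwinding the formulas of Example \ref{ex:semi-direct-product} together with \eqref{eq:induced-action-coordinates}, I would first verify that $(\gamma, h)$ lies in the isotropy subgroupoid exactly when $r(\gamma_s) = s(\gamma_{h s})$ for every $s \in S$. By faithfulness of the action of $G$ on $S$ and the assumption $h \neq e$, there exists some $s_0 \in S$ with $h s_0 \neq s_0$. Given any non-empty basic open subset $U \times \{h\}$ of $S\cV_2 \times \{h\}$, with $U = \prod_{s \in S} U_s$ and $U_s = \cV_2^{(0)}$ for all but finitely many $s$, I would exhibit an element of $U \times \{h\}$ violating the isotropy condition at $s = s_0$. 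Since $r \colon \cV_2 \to \cV_2^{(0)}$ is a local homeomorphism, $r(U_{s_0})$ is a non-empty open subset of the Cantor space $\cV_2^{(0)}$ and so contains at least two distinct points; thus, crucially exploiting that $h s_0 \neq s_0$ so that the coordinates $s_0$ and $h s_0$ may be chosen independently, I can select $\gamma_{s_0} \in U_{s_0}$ and $\gamma_{h s_0} \in U_{h s_0}$ with $r(\gamma_{s_0}) \neq s(\gamma_{h s_0})$, filling in the remaining coordinates arbitrarily from the $U_t$. Then no non-empty open subset of $S\cV_2 \times \{h\}$ lies inside the isotropy, completing the proof.
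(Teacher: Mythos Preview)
Your proposal is correct and follows essentially the same approach as the paper: the same decomposition of the isotropy into clopen slices $S\cV_2 \times \{h\}$, the same appeal to Lemma \ref{lem:SV2-is-effective} for $h=e$, and for $h \neq e$ the same use of faithfulness to find $s_0 \neq hs_0$ together with the absence of isolated points in $\cC$. The only cosmetic difference is in the final step: the paper argues that if a basic open $U = \prod_s U_s$ were contained in the isotropy slice then $(r \times s)(U_{s_0} \times U_{hs_0})$ would be open and contained in the diagonal $\Delta(\cC) \subset \cC \times \cC$ (which has empty interior), whereas you directly construct an element of $U$ outside the isotropy --- a slightly more hands-on version of the same idea.
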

\begin{proof}
By Lemma \ref{lem:SV2-etale}, we already know that $S\cV_2$ is ample. As explained in Example \ref{ex:semi-direct-product}, it follows that $S\cV_2 \rtimes G$ is also an ample groupoid. In addition, the unit space of the semi-direct product $S\cV_2 \rtimes G$ is $(S\cV_2)^{(0)} \times \{e\}$, where $e$ denotes the identity element of $G$, so it is also homeomorphic to the Cantor space $\cC$; in particular it has no isolated points.

It remains to prove that $S\cV_2 \rtimes G$ is effective. In other words, we need to verify that the interior of the isotropy subgroupoid
\[
\mathrm{Iso}(S\cV_2 \rtimes G) = \{ (\gamma,g) \in S\mathcal{V}_2\rtimes {G} \mid r((\gamma,g)) = s((\gamma,g)) \}
\]
coincides with the unit space $(S\mathcal{V}_2\rtimes {G})^{(0)}$. Clearly $(S\mathcal{V}_2\rtimes {G})^{(0)}$ is contained in $\mathrm{Iso}(S\cV_2 \rtimes G)$, and it is open in $S\cV_2 \rtimes G$, so it lies in the interior of $\mathrm{Iso}(S\cV_2 \rtimes G)$. We therefore have to show that $\mathrm{Iso}(S\cV_2 \rtimes G)$ has no other interior points.

Recall from Example \ref{ex:semi-direct-product} that $r((\gamma,g)) = (r(\gamma),e)$ and $s((\gamma,g)) = (g^{-1} \cdot s(\gamma),e)$ where $\cdot$ denotes the left action of $G$ on $S\cV_2$ described in \eqref{eq:induced-action-coordinates}. Hence the isotropy subgroupoid may be described as:
\begin{align*}
\mathrm{Iso}(S\cV_2 \rtimes G) &= \{ (\gamma,g) \in S\cV_2 \rtimes G \mid r(\gamma) = g^{-1} \cdot s(\gamma) \} \\
&= \{ ((\gamma_s)_{s \in S},g) \in S\cV_2 \rtimes G \mid (r(\gamma_s))_{s \in S} = g^{-1} \cdot (s(\gamma_s))_{s \in S} \} \\
&= \{ ((\gamma_s)_{s \in S},g) \in S\cV_2 \rtimes G \mid (r(\gamma_s))_{s \in S} = (s(\gamma_{gs}))_{s \in S} \} .
\end{align*}
Since $G$ has the discrete topology, this splits as the disjoint union of the clopen subspaces
\begin{align*}
\mathrm{Iso}(S\cV_2 \rtimes G) \langle g \rangle &= \mathrm{Iso}(S\cV_2 \rtimes G) \cap (S\cV_2 \times \{ g \}) \\
&\cong \{ (\gamma_s)_{s \in S} \in S\cV_2 \mid r(\gamma_s) = s(\gamma_{gs}) \text{ for each } s \in S \} \subset S\cV_2 .
\end{align*}
Let us denote the subspace of $S\cV_2$ described just above by $S\cV_2\langle g \rangle$. In this notation, what we have to show is that the interior of $S\cV_2 \langle e \rangle$ in $S\cV_2$ is equal to $(S\cV_2)^{(0)}$ and that for every $g \in G$ with $g \neq e$ the interior of $S\cV_2 \langle g \rangle$ in $S\cV_2$ is empty.

In the case $g=e$, note that $S\cV_2 \langle e \rangle = \mathrm{Iso}(S\cV_2)$. By Lemma \ref{lem:SV2-is-effective}, the groupoid $S\cV_2$ is effective, so the interior of $S\cV_2 \langle e \rangle = \mathrm{Iso}(S\cV_2)$ as a subspace of $S\cV_2$ is equal to $(S\cV_2)^{(0)}$.

Now let $g \neq e$ and choose an element $s_0 \in S$ such that $gs_0 \neq s_0$ (which exists since the action of $G$ on $S$ is faithful). Suppose that $U$ is an open subset of $S\cV_2$ that is contained in $S\cV_2 \langle g \rangle$; we must prove that $U$ is empty. We may assume that $U$ is a basic open subset, so in particular it is of the form $U = \prod_{s \in S} U_s$ for open subspaces $U_s \subseteq \cV_2$. It will suffice to prove that $U_{s_0} \times U_{gs_0}$ is empty. First note that every element of $U_{s_0} \times U_{gs_0}$ is of the form $((y,m,x),(z,n,y))$ with $x,y,z \in \cC$ and $m,n \in \bZ$, since $U$ is contained in $S\cV_2 \langle g \rangle$. The map
\[
r \times s \colon U_{s_0} \times U_{gs_0} \longrightarrow \cC \times \cC
\]
is open, since it is a product of local homeomorphisms. In particular its image $(r \times s)(U_{s_0} \times U_{gs_0})$ is open in $\cC \times \cC$. But $(r \times s)(U_{s_0} \times U_{gs_0})$ is also contained in the diagonal $\Delta(\cC)$. The diagonal $\Delta(\cC)$ has empty interior in the product $\cC \times \cC$ (since $\cC$ has no isolated points), so $(r \times s)(U_{s_0} \times U_{gs_0})$ must be empty, and hence $U_{s_0} \times U_{gs_0}$ must be empty as well.
\end{proof}

Recall from Example \ref{ex:semi-direct-product} that, given an element $(\gamma,g) \in S\mathcal{V}_2\rtimes {G}$, the range and source maps are given by $r((\gamma,g)) = (r(\gamma),e)$ and $s((\gamma,g)) = (g^{-1} \cdot s(\gamma),e)$, where by an abuse of notation we use $r$ and $s$ to denote the range and source maps of both $S\cV_2$ and $S\cV_2 \rtimes G$.

For the next two results, we will need to understand in more detail what the compact open bisections of $S\cV_2 \rtimes G$ look like. By definition, using the above description of the range and source maps of $S\cV_2 \rtimes G$ and the fact that $G$ has the discrete topology, we see that:
\begin{itemize}
    \item if $\cU$ is a compact open bisection of $S\cV_2$ and $g \in G$ then $\cU \times \{g\}$ is a compact open bisection of $S\cV_2 \rtimes G$;
    \item any compact open bisection of $S\cV_2 \rtimes G$ is a finite disjoint union of subsets $\cU \times \{g\}$ as in the previous point.
\end{itemize}
Now every compact open bisection $\cU$ of $S\mathcal{V}_2$ is of the form $\cU = \prod_{s \in S} \cU_s$, where each $\cU_s$ is a compact open bisection of $\cV_2$ and $\cU_s = \cV_2^{(0)}$ for all but finitely many $s$. Finally, we recall that all compact open bisections of $\cV_2$ may be written as finite disjoint unions of \emph{standard compact open bisections}, which have the following form:
\begin{align*}
& \left\lbrace (y, \lvert w \rvert - \lvert w' \rvert ,x) \in \cV_2 \Bigm| x\in w\{0,1\}^{\bN}, y\in w'\{0,1\}^{\bN},  \rho^{\lvert w' \rvert}(y) = \rho^{\lvert w \rvert}(x) \right\rbrace \\
&\hspace{2cm} = \left\lbrace (w'z , \lvert w \rvert - \lvert w' \rvert , wz) \in \cV_2 \Bigm| z \in \{0,1\}^\bN \right\rbrace ,
\end{align*}
where $w$ and $w'$ are words in $\{0,1\}^{\ast}$.

\begin{prop}
\label{prop:groupoid-full-grp-svg}
The topological full group of $S\mathcal{V}_2 \rtimes G$ is the twisted Brin--Thompson group $S\mathrm{V}_G$.
\end{prop}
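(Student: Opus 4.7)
The plan is to reuse the strategy of Proposition \ref{prop:full-group-VH}, viewing both $F(S\cV_2 \rtimes G)$ and $S\V_G$ as subgroups of $\Homeo(\cC^S)$ and then showing the two subgroups coincide. The main additional bookkeeping, relative to the labelled Thompson case, comes from the $G$-twist.

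First I would verify that $S\cV_2 \rtimes G$ is Hausdorff: its underlying space is $S\cV_2 \times G$ with $G$ discrete and $S\cV_2$ a subspace of the Hausdorff product $\prod_S \cV_2$. Together with Lemma \ref{lem:svg-ample-eff}, Lemma \ref{lem:full-group-faithful} then identifies $F(S\cV_2 \rtimes G)$ with a subgroup of $\Homeo((S\cV_2 \rtimes G)^{(0)})$, and the coordinatewise identification of this unit space with $\cC^S$ lets us compare directly with $S\V_G \subseteq \Homeo(\cC^S)$.

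For the containment $S\V_G \subseteq F(S\cV_2 \rtimes G)$ it suffices, since $S\V_G$ is generated by canonical twist homeomorphisms between dyadic bricks, to realise each such homeomorphism as a compact open bisection of $S\cV_2 \rtimes G$. Given $\varphi,\psi \colon S \to \{0,1\}^*$ of finite support and $\gamma_0 \in G$, the natural candidate is $\cU \times \{\gamma_0\}$ with $\cU = \prod_{s \in S} \cU_s$, where $\cU_s \subseteq \cV_2$ is the standard compact open bisection from $\varphi(\gamma_0 s)\{0,1\}^\bN$ to $\psi(s)\{0,1\}^\bN$ (and hence equal to $\cV_2^{(0)}$ for all but finitely many $s$). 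One then computes, using the description of the range and source maps in Example \ref{ex:semi-direct-product} together with the action formula \eqref{eq:induced-action-coordinates} and the twist formula \eqref{eq:tau-gamma}, that the homeomorphism of $\cC^S$ induced by this bisection is exactly $h_\psi \circ \tau_{\gamma_0} \circ h_\varphi^{-1}$. This calculation is the main obstacle, since one has to carefully match the direction of the permutation action in the semi-direct product (which pre-composes with $\alpha(\gamma_0^{-1})$) with the twist formula (which pre-composes with $\alpha(\gamma_0^{-1})$ after stripping and re-prefixing dyadic words), and in particular observe that the shift $s \mapsto \gamma_0 s$ appearing in the definition of $\cU_s$ is exactly the one needed to convert between the two conventions.

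For the reverse containment $F(S\cV_2 \rtimes G) \subseteq S\V_G$ I would start from any compact open bisection $f$ with $r(f)=s(f)=(S\cV_2\rtimes G)^{(0)}$ and apply the two-step decomposition of compact open bisections of $S\cV_2 \rtimes G$ recalled immediately before the proposition. First, decompose $f$ as a finite disjoint union $\bigsqcup_j \cU^{(j)} \times \{g_j\}$ with $\cU^{(j)}$ a compact open bisection of $S\cV_2$; then further refine each $\cU^{(j)}$ into a finite disjoint union of products $\prod_s \cU_s^{(j,k)}$ with each $\cU_s^{(j,k)}$ a standard compact open bisection of $\cV_2$ (equal to $\cV_2^{(0)}$ outside a finite subset of $S$). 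By the computation from the previous paragraph, each such piece implements a canonical twist homeomorphism between two dyadic bricks of $\cC^S$; gathering the sources and ranges of the pieces yields a domain and image partition of $\cC^S$ into finitely many dyadic bricks, together with the group elements $g_j$, exhibiting $f$ as an element of $S\V_G$. The two assignments are mutually inverse by construction, finishing the proof.
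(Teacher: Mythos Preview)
Your outline is essentially the paper's own proof: identify both groups inside $\Homeo(\cC^S)$ via Lemmas \ref{lem:svg-ample-eff} and \ref{lem:full-group-faithful}, then match compact open bisections of $S\cV_2 \rtimes G$ with canonical twist homeomorphisms between dyadic bricks, in both directions.

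There is, however, a concrete sign error in the one place you flagged as the main obstacle. With your choice $s(\cU_s) = \varphi(\gamma_0 s)\{0,1\}^\bN$ and $r(\cU_s) = \psi(s)\{0,1\}^\bN$, the source of $\cU \times \{\gamma_0\}$ is
\[
\gamma_0^{-1} \cdot \prod_{s \in S} \varphi(\gamma_0 s)\{0,1\}^\bN \;=\; \prod_{s \in S} \varphi(\gamma_0^2 s)\{0,1\}^\bN,
\]
using $g \cdot \prod_s A_s = \prod_s A_{g^{-1}s}$ from \eqref{eq:induced-action-coordinates}; this is not $B(\varphi)$. The correct choice is $s(\cU_s) = \varphi(\gamma_0^{-1} s)\{0,1\}^\bN$: then the source becomes $\prod_s \varphi(\gamma_0^{-1}\gamma_0 s)\{0,1\}^\bN = B(\varphi)$, and the induced homeomorphism sends $(\varphi(s)\mu_s)_s$ to $(\psi(s)\mu_{\gamma_0^{-1}s})_s$, which is exactly $h_\psi \circ \tau_{\gamma_0} \circ h_\varphi^{-1}$. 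So the shift you need is $s \mapsto \gamma_0^{-1}s$, not $s \mapsto \gamma_0 s$. Once this is corrected, the rest of your argument goes through and coincides with the paper's.
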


\begin{rem}
The twisted Brin--Thompson group $S\mathrm{V}_G$ is known to be a \emph{full group} (in the sense of \cite[Definition 2.24]{BBMZ23}) in the homeomorphism group of the Cantor space: see \cite[top of p.\ 56]{BBMZ23}. However, to calculate its homology using our approach, one needs more than this: it must be realised as the topological full group of an explicit topological groupoid, as in Proposition \ref{prop:groupoid-full-grp-svg}.
\end{rem}

\begin{proof}[Proof of Proposition \ref{prop:groupoid-full-grp-svg}]
By Lemma \ref{lem:svg-ample-eff}, the topological groupoid $S\mathcal{V}_2 \rtimes {G}$ is ample and effective. It is also clear that it is Hausdorff (it is a subspace of the product $(\cC \times \bZ \times \cC)^S \times G$), so by Lemma \ref{lem:full-group-faithful} it is naturally a subgroup of $\Homeo(\cC^S)$. The twisted Brin--Thompson group $S\mathrm{V}_G$ is by definition a subgroup of $\Homeo(\cC^S)$. We will show that they are equal as subgroups.

\textbf{1.} $F(S\mathcal{V}_2 \rtimes {G}) \supseteq S\mathrm{V}_G$.
Let $f\in S\mathrm{V}_G$. By definition, this means that there are two finite collections of dyadic bricks $\{B(\varphi_{1}), \ldots, B(\varphi_{n})\}$ and $\{B(\psi_{1}), \ldots, B(\psi_{n})\}$ forming dyadic partitions of the Cantor space $\cC^S$, together with elements $g_1, \ldots, g_n\in G$, such that $f$ maps each $B(\psi_i) \to B(\varphi_i)$ via the following composition of maps:
\[
f|_{B(\psi_i)} \colon B(\psi_i) \xrightarrow[\cong]{\; h_{\psi_i}^{-1} \;} \cC^S \xrightarrow[]{\; \tau_{g_i} \;} \cC^S \xrightarrow[\cong]{\; h_{\varphi_i} \;} B(\varphi_i),
\]
where $\tau_{g_i}$ is the twist homeomorphism \eqref{eq:tau-gamma} permuting the coordinates of $\cC^S$ via the action of $g_i \in G$ on the coordinate set $S$.

We need to prove that such a local homeomorphism represents a compact open bisection in $S\cV_2\rtimes G$. Recall that, by definition,
\begin{align*}
B(\psi_i) &= \left\lbrace \kappa \in \cC^S \mid \psi_i(s) \text{ is a prefix of of } \kappa(s) \text{ for each }s\in S \right\rbrace \\
&= \prod_{s \in S} B_s(\psi_i) = \prod_{s \in S} \left\lbrace x \in \cC \mid \psi_i(s) \text{ is a prefix of of } x \right\rbrace = \prod_{s \in S} \psi_i(s)\{0,1\}^\bN,
\end{align*}
and $\psi_i \colon S\to \{0,1\}^\ast$ has finite support, i.e., $\psi_i(s)$ is the empty word for all but finitely many elements of $S$. The homeomorphism $h_{\psi_i}^{-1}$ can therefore be represented by the product of shift homeomorphisms
\[
\prod_{s\in S} \rho^{\lvert \psi_i(s) \vert} .
\]
Note that $\rho^{\lvert \psi_i(s) \vert} = \mathrm{id}$ for all but finitely many $s\in S$. The same reasoning also applies to the homeomorphism $h^{-1}_{\varphi_i}$. Hence $f|_{B(\psi_i)}$ can be written as the following composition of maps (from left to right):
\[
\prod_{s \in S} B_s(\psi_i) \xrightarrow[\cong]{\; \prod_{s\in S} \rho^{\lvert \psi_i(s) \rvert} \;} \cC^S \xrightarrow[]{\; \tau_{g_i} \;}\cC^S \xleftarrow[\cong]{\prod_{s\in S} \rho^{\lvert \varphi_i(s) \rvert}} \prod_{s \in S} B_s(\varphi_i).
\]
Redistricted to each coordinate $s$, both $\rho^{\lvert \psi_i(s) \rvert}$ and $ \rho^{\lvert \phi_i(s) \rvert}$ are homeomorphisms, and the standard clopen subspaces $B_s(\psi_i) = \psi_i(s)\{0,1\}^\bN$ and $B_s(\varphi_i) = \varphi_i(s)\{0,1\}^\bN$ are compact open subspaces of $\cC$. Also, $\tau_{g_i}$ is a fixed homeomorphism of $\cC^S$ given by a permutation of coordinates. Restricted to one coordinate $s$, we therefore have the composition of maps
\[
B_s(\psi_i) \xrightarrow[\cong]{\; \rho^{\lvert \psi_i(s) \rvert} \;} \cC_s \xrightarrow[]{\; \mathrm{id} \;} \cC_{g_i s} \xrightarrow[\cong]{\; (\rho^{\lvert \varphi_i(g_i s) \rvert})^{-1} \;} B_{g_i s}(\varphi_i),
\]
where $\cC_s = \cC$ denotes the copy of the Cantor set in the $s$ coordinate and $\mathrm{id} \colon \cC_s \to \cC_{g_i s}$ is the identity map under these identifications. This homeomorphism is realised by the compact open bisection
\[
\cW_i = \prod_{s \in S} \cU_s \times \{g_i\}
\]
of $S\cV_2 \rtimes G$, where for each $s \in S$, the (standard) compact open bisection $\cU_s$ of $\cV_2$ is the following:
\begin{align*}
\cU_s &= \left\lbrace (y, \lvert \varphi_i(s) \rvert - \lvert \psi_i(g_i^{-1}s) \rvert ,x) \in \cV_2 \Bigm| y\in B_{g_i^{-1}s}(\psi_i), x\in B_{s}(\varphi_i), \rho^{\lvert \varphi_i(s) \rvert}(x) = \rho^{\lvert \psi_i(g_i^{-1}s) \rvert}(y) \right\rbrace \\
&= \left\lbrace \Bigl( \psi_i(g_i^{-1}s)z \, , \, \lvert \varphi_i(s) \rvert - \lvert \psi_i(g_i^{-1}s) \rvert \, , \, \varphi_i(s)z \Bigr) \in \cV_2 \Bigm| z \in \{0,1\}^\bN \right\rbrace .
\end{align*}
Note that, since $\varphi_i$ and $\psi_i$ have finite support, this is equal to $\cV_2^{(0)}$ for all but finitely many coordinates $s \in S$, so this is indeed a compact open bisection of $S\cV_2 \rtimes G$ (see the discussion before the proposition). Finally, taking the disjoint union over $i \in \{1,\ldots,n\}$, we have a compact open bisection
\[
\coprod_{i=1}^n \cW_i
\]
of $S\cV_2 \rtimes G$ realising the homeomorphism $f$, so $f \in F(S\mathcal{V}_2 \rtimes G)$.

\textbf{2.} $F(S\mathcal{V}_2 \rtimes {G}) \subseteq S\mathrm{V}_G$.
Now let $f \in F(S\mathcal{V}_2 \rtimes {G})$. By the discussion of compact open bisections of $S\cV_2 \rtimes G$ just before the proposition, this means that $f$ can be written as (the homeomorphism of $\cC^S$ corresponding to) a finite disjoint union of compact open bisections of the form $\cW = \prod_{s \in S} \cU_s \times \{g\}$, where $\cU_{s} \subseteq \cV_2$ is a compact open bisection of $\cV_2$ of the form:
\begin{align*}
\cU_s &= \left\lbrace (y, \lvert w_s \rvert - \lvert w'_s \rvert ,x) \in \cV_2 \Bigm| x\in w_s \{0,1\}^\bN, y\in w'_s \{0,1\}^\bN, \rho^{\lvert w'_s \rvert} (y) = \rho^{\lvert w_s \rvert}(x) \right\rbrace \\
&= \left\lbrace (w'_s z \, , \, \lvert w_s \rvert - \lvert w'_s \rvert \, , \, w_s z) \in \cV_2 \Bigm| z \in \{0,1\}^\bN \right\rbrace ,
\end{align*}
where $w_s, w'_s\in \{0,1\}^\ast$ for each $s\in S$ and $w_s = w'_s = \varnothing$ for all but finitely many $s\in S$. In particular, $w_s$ and $w'_s$ may be regarded as functions $w,w' \colon S \to \{0,1\}^\ast$ with finite support, so we may consider the associated dyadic bricks $B(w)$ and $B(w')$. We also define $\bar{w} \colon S \to \{0,1\}^{\ast}$ by $\bar{w}_s = w_{gs}$ and consider its associated dyadic brick $B(\bar{w})$. The homeomorphism associated to the compact open bisection $\cW$ is therefore the following composition (from right to left):
\[
B(w') = \prod_{s \in S} w'_s\{0,1\}^\bN \xleftarrow[\cong]{\; r \;} \cW \xrightarrow[\cong]{\; s \;} \prod_{s \in S} w_s\{0,1\}^\bN = B(w) \xrightarrow[\cong]{\; g^{-1} \;} \prod_{s \in S} w_{gs}\{0,1\}^\bN = B(\bar{w}).
\]
The additional action of $g^{-1}$ on the right-hand side arises because of the formula $s((\gamma,g)) = (g^{-1} \cdot s(\gamma),e)$ for the source map of $S\cV_2 \rtimes G$. Reversing each of these maps and identifying $\cW$ with $\cC^S$ (using the evident identifications of each $\cU_s$ with $\cC$), we may rewrite this as follows:
\[
B(w') \xrightarrow[\cong]{\; \prod_{s \in S} \rho^{\lvert w'_s \rvert} \;} \cC^S \xleftarrow[\cong]{\; \prod_{s \in S} \rho^{\lvert w_s \rvert} \;} B(w) \xleftarrow[\cong]{\; g \;} B(\bar{w}).
\]
We may rewrite the homeomorphism $\cC^S \leftarrow B(\bar{w})$ above as follows:
\begin{align*}
\prod_{s\in S} \rho^{\lvert w_s \rvert} \circ g &= g \circ \left( g^{-1} \circ \prod_{s\in S} \rho^{\lvert w_s \rvert} \circ g \right) \\
&= g \circ \prod_{s \in S} \rho^{\lvert \bar{w}_s \rvert} ,
\end{align*}
and hence the homeomorphism associated to the compact open bisection $\cW$ may be rewritten as follows:
\[
B(w') \xrightarrow[\cong]{\; \prod_{s \in S} \rho^{\lvert w'_s \rvert} \;} \cC^S \xrightarrow[\cong]{\; \tau_{g^{-1}} \;} \cC^S \xleftarrow[\cong]{\; \prod_{s \in S} \rho^{\lvert \bar{w}_s \rvert} \;} B(\bar{w}),
\]
where we now write $\tau_{g^{-1}}$ instead of $g^{-1}$ for the middle homeomorphism, following the notation \eqref{eq:tau-gamma}. We now note that this is precisely the canonical twist homeomorphism associated to the dyadic bricks $B(\bar{w})$, $B(w')$ and the group element $g^{-1}$, as described at the beginning of \S\ref{subsection:twisted-BT}.

Since the domain and image of $f$ are the whole Cantor cube $\cC^S$, it follows that, as we run through the finite disjoint union of compact open bisections $\cW$ corresponding to $f$ as above, their sources and ranges each form a dyadic partition of $\cC^S$ and $f$ is defined on each dyadic brick in the source partition by a canonical twist homeomorphism as above. This means, by definition, that $f$ lies in $S\mathrm{V}_G$.
\end{proof}

\begin{lem}
\label{lem:svg-purely-inf-min}
The ample groupoid $S\mathcal{V}_2 \rtimes {G}$ is purely infinite minimal.
\end{lem}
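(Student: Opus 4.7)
The plan is to first reduce to showing that the underlying groupoid $S\cV_2$ is itself purely infinite minimal, and then adapt the argument from Lemma \ref{lem:minimal+comprision-VH}. For the reduction, given any compact open bisection $\sigma$ of $S\cV_2$, its image $\sigma \times \{e\}$ under the natural inclusion is a compact open bisection of $S\cV_2 \rtimes G$. Since the action of $e \in G$ is trivial, the source and range formulas from Example \ref{ex:semi-direct-product} give $s(\sigma \times \{e\}) = s(\sigma) \times \{e\}$ and $r(\sigma \times \{e\}) = r(\sigma) \times \{e\}$, so it suffices to construct the required bisection inside $S\cV_2$.

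Given compact open subsets $U, V \subseteq (S\cV_2)^{(0)} \cong \cC^S$ with $V \neq \varnothing$, I would first decompose each as a finite disjoint union of basic clopen sets of the form $\prod_{s \in S} w_s\{0,1\}^\bN$, where $w_s \in \{0,1\}^\ast$ is the empty word for all but finitely many $s$; this is possible because such sets form a basis for the topology on $\cC^S$ and both $U$ and $V$ are compact open. Writing $U = \bigsqcup_{i=1}^n U_i$ and $V = \bigsqcup_{j=1}^m V_j$, I would subdivide the partition of $V$ further (splitting some $V_j$ at a prefix in a fixed coordinate $s_0 \in S$) until $m \geq n$, so that the pieces of $U$ can be paired with pieces of $V$. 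For each $i \in \{1,\ldots,n\}$ with $U_i = \prod_s u_{i,s}\{0,1\}^\bN$ and $V_i = \prod_s v_{i,s}\{0,1\}^\bN$, I would then define $\sigma_i := \prod_{s \in S} \sigma_{i,s}$, where $\sigma_{i,s}$ is the standard prefix-substitution compact open bisection of $\cV_2$ from $u_{i,s}\{0,1\}^\bN$ to $v_{i,s}\{0,1\}^\bN$ (and equals $\cV_2^{(0)}$ when $u_{i,s} = v_{i,s} = \varnothing$), exactly as in the proof of Lemma \ref{lem:groupdV2-full-V}. The disjoint union $\sigma := \bigsqcup_{i=1}^n \sigma_i$ is then a compact open bisection of $S\cV_2$ with $s(\sigma) = U$ and $r(\sigma) = \bigsqcup_{i=1}^n V_i \subseteq V$, which completes the construction after tensoring with $\{e\}$.

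The argument is essentially a direct generalisation of Lemma \ref{lem:minimal+comprision-VH} from $\cV_2 \times G$ to $S\cV_2 \rtimes G$, and I do not anticipate a serious obstacle. The only point requiring some care is checking that each $\sigma_i$ genuinely lies in $S\cV_2$ rather than merely in the larger product $\prod_S \cV_2$; this is immediate, however, because each $u_{i,s}$ and $v_{i,s}$ is empty for all but finitely many $s \in S$, so the factor $\sigma_{i,s}$ equals $\cV_2^{(0)}$ for all but finitely many $s$, placing $\sigma_i$ in $S\cV_2$ by Definition \ref{defn:groupoid-svg}.
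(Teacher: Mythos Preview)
Your proposal is correct and follows essentially the same approach as the paper's proof: decompose $U$ and $V$ into finitely many dyadic bricks (basic clopen sets), subdivide $V$ until there are at least as many pieces as in $U$, build each $\sigma_i$ as a product of standard prefix-substitution bisections of $\cV_2$, and take the disjoint union, all inside $S\cV_2 \times \{e\} \subseteq S\cV_2 \rtimes G$. Your explicit remark that each $\sigma_i$ lies in $S\cV_2$ (rather than merely in $\prod_S \cV_2$) because only finitely many coordinates are nontrivial is a point the paper leaves implicit.
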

\begin{proof}
Recall that the unit space of $S\mathcal{V}_2 \rtimes {G}$ is $\cC^S \times \{e\}$, where $e$ denotes the identity element of $G$. This is homeomorphic to the Cantor space $\cC$, which is compact. Now let $U$ and $W \neq \emptyset$ be two compact open subsets of the unit space. We need to find a compact open bisection $\sigma \subseteq S\cV_2 \rtimes G$ such that $s(\sigma) = U$ and $r(\sigma) \subseteq W$.

Any compact open subspace of $\cC^S$ is a finite disjoint union of dyadic bricks, so we may choose coverings of $U$ and $W$ by finitely many pairwise disjoint dyadic bricks, which we denote by $B(\varphi_1),B(\varphi_2),\ldots B(\varphi_n)$ and $B(\psi_1),B(\psi_2),\ldots B(\psi_m)$ respectively, associated to finitely-supported functions $\varphi_i,\psi_i \colon S \to \{0,1\}^\ast$. Up to subdivision, we may assume that $m\geq n$. It suffices now to construct compact open bisections $\sigma_i$ of $S\cV_2 \rtimes G$ such that $s(\sigma_i) = B(\varphi_i)$ and $r(\sigma_i) = B(\psi_i)$ for each $1\leq i\leq n$. Let
\[
\cU_{i,s} = \left\lbrace (y, \lvert \varphi_i(s) \rvert - \lvert \psi_i(s) \rvert ,x) \in \cV_2 \Bigm| x \in \varphi_i(s)\{0,1\}^{\bN}, y\in \psi_i(s)\{0,1\}^{\bN}, \rho^{\lvert \psi_i(s) \rvert}(y) = \rho^{\lvert \varphi_i(s) \rvert}(x) \right\rbrace
\]
for each $s \in S$ and $1\leq i\leq n$. Then for each $1\leq i\leq n$ the subset
\[
\sigma_i = \prod_{s \in S} \cU_{i,s} \times \{e\} \subset S\cV_2 \rtimes G
\]
is a compact open bisection with the required source and range.

Finally, since the dyadic bricks $B(\varphi_1),B(\varphi_2),\ldots B(\varphi_n)$ are pairwise disjoint, and similarly for the dyadic bricks $B(\psi_1),B(\psi_2),\ldots B(\psi_m)$, we may take the disjoint union $\sigma = \sigma_1 \sqcup \cdots \sqcup \sigma_n$ to obtain another compact open bisection $\sigma \subseteq S\cV_2 \rtimes G$ that has $s(\sigma) = U$ and $r(\sigma) = B(\psi_1) \sqcup \cdots \sqcup B(\psi_n) \subseteq W$.
\end{proof}

Lemmas \ref{lem:svg-ample-eff} and \ref{lem:svg-purely-inf-min} together imply that the topological groupoid $S\mathcal{V}_2 \rtimes {G}$ satisfies the conditions of Theorem \ref{thm:homgy-groupoid-fullgroup}. It remains to show that the homology of $S\mathcal{V}_2 \rtimes {G}$ vanishes in every degree. We first calculate the homology of the groupoid $S\mathcal{V}_2$.

\begin{lem}
\label{lem:groupoid-sv-acyclic}
We have $H_i(S\mathcal{V}_2;\mathbb{Z}) = 0$ for any $i\geq 0$.
\end{lem}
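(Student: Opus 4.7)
The plan is to realise $S\cV_2$ as a directed union of open subgroupoids, each of which is itself a direct product of finitely many copies of $\cV_2$ with a trivial ``spectator'' groupoid, and then deduce the vanishing from the Künneth formula together with Matui's computation $H_\ast(\cV_2;\bZ) = 0$.

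For each finite subset $F \subseteq S$, I would define
\[
\cH_F = \left\{ (\gamma_s)_{s \in S} \in S\cV_2 \bigm| \gamma_s \in \cV_2^{(0)} \text{ for every } s \notin F \right\}.
\]
Inspecting the basis described in Definition \ref{defn:groupoid-svg}, $\cH_F$ is an open subspace of $S\cV_2$, and it is plainly closed under the groupoid operations, hence an open subgroupoid. As a topological groupoid it is naturally isomorphic to the direct product $\left(\prod_{s \in F} \cV_2\right) \times Y_F$, where $Y_F = \prod_{s \in S \smallsetminus F} \cV_2^{(0)}$ is regarded as a trivial groupoid (so $s = r = \mathrm{id}$). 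Since every element of $S\cV_2$ has unit coordinates outside some finite subset of $S$, we have $S\cV_2 = \bigcup_F \cH_F$ as a directed union indexed by the finite subsets $F \subseteq S$.

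Next I would compute $H_\ast(\cH_F; \bZ)$ using the Künneth formula \cite[Theorem 2.4]{Matui2016}. The homology of the trivial groupoid $Y_F$ is concentrated in degree zero, while $H_\ast(\cV_2; \bZ)$ vanishes in every degree by Matui \cite[Theorem 4.14]{Matui2012}. Iterating Künneth over the finitely many factors $\cV_2$ and then multiplying in the factor $Y_F$ forces both the tensor and the $\mathrm{Tor}$ contributions in each relevant Künneth short exact sequence to vanish, so $H_i(\cH_F; \bZ) = 0$ for every $i \geq 0$.

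Finally, groupoid homology of an étale groupoid $\cG$ may be computed from the chain complex $C_c(\cG^{(\bullet)}; \bZ)$ of compactly supported integer-valued functions on the spaces of composable $n$-tuples, and $(S\cV_2)^{(n)} = \bigcup_F \cH_F^{(n)}$ is a directed union of open subspaces (any composable $n$-tuple in $S\cV_2$ lies in $\cH_F^{(n)}$ once $F$ contains all the non-unit coordinates of its entries). Compactly supported functions on a directed union of open subspaces form the colimit of the corresponding $C_c$-spaces (any such function has compact support, which by a finite subcover argument is contained in one of the members of the system), and filtered colimits commute with homology, so
\[
H_i(S\cV_2; \bZ) = \mathrm{colim}_F \, H_i(\cH_F; \bZ) = 0
\]
for every $i \geq 0$. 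The step requiring the most care is this last commutation of groupoid homology with directed unions of open subgroupoids; once it is properly justified, the rest is essentially bookkeeping driven by Künneth and Matui's vanishing result.
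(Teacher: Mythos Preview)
Your proof is correct (with the minor caveat that you need $F \neq \varnothing$ for $H_\ast(\cH_F) = 0$, but nonempty finite subsets are cofinal), though you are working much harder than necessary. The paper's argument is essentially a one-liner: since $S$ is nonempty, pick any $s_0 \in S$ and observe that $S\cV_2 \cong \cV_2 \times S'\cV_2$ where $S' = S \smallsetminus \{s_0\}$; a single application of K\"unneth \cite[Theorem~2.4]{Matui2016} then kills everything, because $H_\ast(\cV_2;\bZ) = 0$ in \emph{every} degree, including degree $0$, so the homology of the second factor $S'\cV_2$ never enters.

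Your route instead exhausts $S\cV_2$ by the open subgroupoids $\cH_F \cong \bigl(\prod_F \cV_2\bigr) \times Y_F$, computes each $H_\ast(\cH_F)$ via iterated K\"unneth, and passes to the directed colimit. All of these steps are fine, and your justification that groupoid homology commutes with directed unions of open subgroupoids is the right one. But the colimit machinery and the spectator groupoid $Y_F$ are entirely avoidable: the point you are not exploiting is that once \emph{one} factor in a product has vanishing homology in all degrees, K\"unneth annihilates the product regardless of what the other factor looks like. Your argument would be the natural approach if, say, $H_0(\cV_2)$ were nonzero; here it is overkill.
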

\begin{proof}
Let $S' = S \smallsetminus \{s\}$ for some $s \in S$. (We may assume that $S$ is non-empty since otherwise $S\cV_2 = \emptyset\cV_2$ is the trivial groupoid.) Notice that the groupoid $\cV_2 \times S'\cV_2$ is isomorphic to $S\cV_2$. Since $H_i(\mathcal{V}_2;\mathbb{Z}) = 0$ for any $i\geq 0$, the K\"unneth formula \cite[Theorem 2.4]{Matui2016} implies that $H_i(S\mathcal{V}_2;\mathbb{Z}) = 0$ for any $i\geq 0$.
\end{proof}

\begin{prop}
\label{prop:groupoi-svg-cyclic}
We have $H_i(S\mathcal{V}_2\rtimes {G};\mathbb{Z}) = 0$ for any $i\geq 0$.
\end{prop}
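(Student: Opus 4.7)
The plan is to use a Lyndon--Hochschild--Serre type spectral sequence associated to the natural projection $\tilde{f} \colon S\cV_2 \rtimes G \to G$ from Example \ref{ex:semi-direct-product}. This homomorphism exhibits $S\cV_2 \rtimes G$ as an extension of the discrete group $G$ (regarded as a one-object groupoid) by the étale groupoid $S\cV_2$, and for such an extension one has a first-quadrant spectral sequence
\[
E^2_{p,q} = H_p\bigl(G;\, H_q(S\cV_2;\bZ)\bigr) \Longrightarrow H_{p+q}(S\cV_2 \rtimes G;\bZ),
\]
where $H_p(G;-)$ denotes ordinary group homology and the coefficient $G$-module structure on $H_q(S\cV_2;\bZ)$ is induced by the defining action of $G$ on $S\cV_2$ by coordinate permutations.

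This spectral sequence arises from the bisimplicial structure on the nerve of $S\cV_2 \rtimes G$: a composable $n$-tuple $((\gamma_1,g_1),\ldots,(\gamma_n,g_n))$ decomposes into its $G$-part $(g_1,\ldots,g_n) \in G^n$ and a compatible tuple in $S\cV_2$ (after applying the appropriate automorphisms $f(g_i)$), and filtering the resulting double complex row-wise and column-wise yields the desired spectral sequence. Versions of this Serre/Lyndon--Hochschild--Serre spectral sequence for crossed products of étale groupoids by discrete groups are standard in the literature on étale groupoid homology; alternatively one may construct it directly from the bisimplicial nerve.

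By Lemma \ref{lem:groupoid-sv-acyclic} we have $H_q(S\cV_2;\bZ) = 0$ for every $q \geq 0$, so the entire $E^2$-page vanishes: $E^2_{p,q} = H_p(G;0) = 0$. The spectral sequence then immediately gives $H_n(S\cV_2 \rtimes G;\bZ) = 0$ for every $n \geq 0$, completing the proof. The main technical point is verifying or citing the precise form of the spectral sequence at the level of étale groupoid homology (as opposed to e.g.\ groupoid cohomology or $K$-theory); once this is in hand the argument is purely formal and does not depend on the specific structure of $G$ or of its action on $S$.
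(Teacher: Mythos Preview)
Your proposal is correct and follows essentially the same approach as the paper: both invoke the Lyndon--Hochschild--Serre spectral sequence $E^2_{p,q} = H_p(G; H_q(S\cV_2;\bZ)) \Rightarrow H_{p+q}(S\cV_2 \rtimes G;\bZ)$ for the semi-direct product groupoid, combine it with Lemma \ref{lem:groupoid-sv-acyclic}, and conclude by collapse. The only difference is that the paper supplies the precise reference for the spectral sequence (\cite[Theorem 3.8(2)]{Matui2012}), whereas you sketch its bisimplicial origin and appeal to it as standard.
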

\begin{proof}
With the help of Lemma \ref{lem:groupoid-sv-acyclic}, this calculation follows immediately from the spectral sequence in \cite[Theorem 3.8(2)]{Matui2012}. Namely, this spectral sequence is of the form:
\[
E^2_{p,q} = H_p(G; H_q(S\cV_2;\bZ)) \Rightarrow H_{p+q}(S\cV_2\rtimes G;\bZ).
\]
From the previous lemma we know that $H_q(S\mathcal{V}_2; \mathbb{Z}) = 0$ for any $q\geq 0$, so on the $E^2$ page of the spectral sequence we have $E_{p,q}^2 = H_p(G; H_q(S\cV_2;\bZ)) = H_p(G;0) = 0$ for any $p,q\geq 0$. Thus the entire spectral sequence vanishes and we have $H_i(S\mathcal{V}_2\rtimes {G}; \mathbb{Z}) = 0$ for any $i\geq 0$.
\end{proof}

\begin{thm}[Theorem \ref{thm:acyclic-t-br-thom}]
\label{thm:tbt-acyc}
For any group $G$ acting faithfully on a countable set $S$, the associated twisted Brin--Thompson group $S\mathrm{V}_G$ is acyclic.
\end{thm}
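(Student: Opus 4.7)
The plan is to follow the same overall strategy as in the proof of Theorem \ref{thm:laV-acyc}: realise $S\mathrm{V}_G$ as the topological full group of an explicit ample groupoid, verify the hypotheses of Xin Li's result (Theorem \ref{thm:homgy-groupoid-fullgroup}), and then invoke vanishing of the groupoid homology. Essentially all of the work has already been done in the preceding results of \S\ref{subsection:twisted-BT}, so the final proof will be a short bookkeeping argument that assembles these ingredients.

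Concretely, I would first apply Proposition \ref{prop:groupoid-full-grp-svg} to identify $S\mathrm{V}_G$ with the topological full group $F(S\mathcal{V}_2 \rtimes G)$. Next I would verify the hypotheses of Theorem \ref{thm:homgy-groupoid-fullgroup} for the ample groupoid $S\mathcal{V}_2 \rtimes G$: its unit space is homeomorphic to the Cantor space $\cC^S$ (see Definition \ref{defn:groupoid-svg} and Lemma \ref{lem:svg-ample-eff}), hence compact and without isolated points; it is purely infinite minimal by Lemma \ref{lem:svg-purely-inf-min}; and it is effective (and in particular $F(S\mathcal{V}_2 \rtimes G)$ really does sit inside $\Homeo(\cC^S)$) by Lemma \ref{lem:svg-ample-eff}.

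With all these conditions verified, I would quote Proposition \ref{prop:groupoi-svg-cyclic}, which tells us that $H_i(S\mathcal{V}_2 \rtimes G; \mathbb{Z}) = 0$ for every $i \geq 0$. Theorem \ref{thm:homgy-groupoid-fullgroup}, applied in the limiting form noted at the end of its statement, then yields that the topological full group $F(S\mathcal{V}_2 \rtimes G)$ is acyclic, and combining with Proposition \ref{prop:groupoid-full-grp-svg} this gives acyclicity of $S\mathrm{V}_G$.

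There is essentially no obstacle remaining at this stage: the heavy lifting has already been carried out in constructing the groupoid $S\mathcal{V}_2 \rtimes G$, proving it is ample, effective and purely infinite minimal, identifying its topological full group with $S\mathrm{V}_G$, and computing its homology via the K\"unneth formula (Lemma \ref{lem:groupoid-sv-acyclic}) together with the Matui spectral sequence (Proposition \ref{prop:groupoi-svg-cyclic}). The only thing worth being careful about is that Theorem \ref{thm:homgy-groupoid-fullgroup} as stated requires a fixed finite $k$, so I would explicitly invoke its ``In particular'' clause, or else remark that since $H_\ast(S\mathcal{V}_2 \rtimes G)$ vanishes for all $\ast < k$ for every $k$, iterating the theorem gives $H_\ast(F(S\mathcal{V}_2 \rtimes G)) = 0$ for every $\ast > 0$, which together with the fact that $F(S\mathcal{V}_2 \rtimes G)$ is a non-empty group (so $H_0 = \bZ$) is precisely acyclicity.
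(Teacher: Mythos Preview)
Your proposal is correct and follows essentially the same approach as the paper: assemble Lemmas \ref{lem:svg-ample-eff} and \ref{lem:svg-purely-inf-min} to verify the hypotheses of Theorem \ref{thm:homgy-groupoid-fullgroup}, invoke Proposition \ref{prop:groupoi-svg-cyclic} for the vanishing of groupoid homology, and conclude via Proposition \ref{prop:groupoid-full-grp-svg}. The only minor remark is that effectiveness is not actually a hypothesis of Theorem \ref{thm:homgy-groupoid-fullgroup}; it is used (as you correctly note parenthetically) inside the proof of Proposition \ref{prop:groupoid-full-grp-svg}, so you may streamline by omitting it from the list of conditions to check at this stage.
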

\begin{proof}
As noted above, Lemmas \ref{lem:svg-ample-eff} and \ref{lem:svg-purely-inf-min} imply that the topological groupoid $S\cV_2 \rtimes G$ satisfies the conditions of Theorem \ref{thm:homgy-groupoid-fullgroup}. By Proposition \ref{prop:groupoi-svg-cyclic}, we have $H_i(S\mathcal{V}_2 \rtimes G; \mathbb{Z}) = 0$ for any $i\geq 0$, so Theorem \ref{thm:homgy-groupoid-fullgroup} tells us that its topological full group $F(S\mathcal{V}_2 \rtimes G)$ is acyclic. But by Proposition \ref{prop:groupoid-full-grp-svg}, this is the twisted Brin--Thompson group $S\mathrm{V}_G$. Thus $S\V_G$ is acyclic.
\end{proof}

\begin{rem}
\label{rmk:uncountable-S}
The only way in which we used the assumption that $S$ is countable in this section was in order to have $\cC^S \cong \cC$, which was used to deduce that the unit spaces of $S\cV_2$ and $S\cV_2 \rtimes G$ (which are both homeomorphic to $\cC^S$) have the appropriate point-set topological properties, namely that they are:
\begin{itemize}
    \item Hausdorff,
    \item totally disconnected,
    \item without isolated points,
    \item locally compact.
\end{itemize}
However, if $S$ is an uncountable set, then the space $\cC^S$ still satisfies all of these hypotheses, even though $\cC^S \not\cong \cC$ in this case.\footnote{For uncountable $S$ we have $\cC^S \not\cong \cC$ because $\cC^S$ is not second countable (indeed $X^S$ is not second countable for any non-indiscrete space $X$). If $S$ has at least continuum cardinality (a stronger condition unless assuming the Continuum Hypothesis), then we moreover have $\lvert \cC^S \rvert > \lvert \cC \rvert$.} For the first three properties above, this is because they are preserved under taking arbitrary products. Although the last property (local compactness) is not preserved under taking arbitrary products, the combined property of being locally compact \emph{and} compact is preserved under taking arbitrary products. Thus, since $\cC$ is compact and satisfies the above properties, all of its powers $\cC^S$ also satisfy these properties. The proofs in this section are therefore valid also for any uncountable set $S$; in particular, Theorem \ref{thm:acyclic-t-br-thom} is also true in this case.

The proof of simplicity of $S\V_G$ due to Belk--Zaremsky \cite[Theorem 3.4]{BelkZaremsky22} depends on countability of $S$ in order to have $\cC^S \cong \cC$, since it uses a criterion of Bleak--Elliott--Hyde \cite[Theorem 4.18]{BleakElliottHyde2024} for simplicity of subgroups of $\mathrm{Homeo}(\cC)$. However, it is still true that $S\V_G$ is simple when $S$ is uncountable.
\end{rem}

\begin{lem}
\label{lem:SVG-simple}
For any group $G$ acting faithfully on a set $S$, the group $S\V_G$ is simple.
\end{lem}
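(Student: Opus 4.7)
The plan is to deduce simplicity of $S\V_G$ from the identification $S\V_G \cong F(S\cV_2 \rtimes G)$ established in Proposition \ref{prop:groupoid-full-grp-svg}, together with a general simplicity theorem for topological full groups of ample groupoids. By Lemma \ref{lem:svg-ample-eff}, the groupoid $S\cV_2 \rtimes G$ is ample, effective and Hausdorff with compact unit space, and by Lemma \ref{lem:svg-purely-inf-min} it is purely infinite minimal. As emphasised in Remark \ref{rmk:uncountable-S}, the proofs of these properties remain valid without any countability hypothesis on $S$, so they apply for an arbitrary set $S$ on which $G$ acts faithfully.

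I would then invoke the standard result (going back to Matui \cite{Matui15} for second countable groupoids, and later extended by Nekrashevych) that the topological full group of an effective, purely infinite minimal, ample Hausdorff groupoid with compact unit space coincides with its commutator subgroup and is simple. Applying this to $\cG = S\cV_2 \rtimes G$ immediately yields the simplicity of $F(\cG) = S\V_G$.

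The main point to be careful about is that the version of the simplicity theorem one invokes must not require second countability of $\cG$, since for uncountable $S$ the unit space $\cC^S$ of $S\cV_2 \rtimes G$ fails to be second countable (in particular $\cC^S \not\cong \cC$, as noted in Remark \ref{rmk:uncountable-S}). Matui's original theorem was stated for groupoids with Cantor unit space, but Nekrashevych's approach via the alternating full group proceeds by purely local arguments on compact open bisections and extends to the setting of arbitrary compact Hausdorff totally disconnected unit spaces with no isolated points, which is precisely the setting we are in. A direct alternative, in case one wishes to avoid citing such a general theorem, would be to adapt the Belk--Zaremsky argument \cite[Theorem 3.4]{BelkZaremsky22} by replacing their use of the Bleak--Elliott--Hyde criterion (which requires $\cC^S \cong \cC$) with a commutator argument carried out directly on compact open bisections of $S\cV_2 \rtimes G$; the pure infinite minimality established in Lemma \ref{lem:svg-purely-inf-min} supplies exactly the flexibility needed to conjugate any non-trivial element to obtain the required generators.
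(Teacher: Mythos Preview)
Your approach is genuinely different from the paper's, and the difference is worth spelling out. The paper does \emph{not} go through the groupoid identification at all; instead it reduces directly to the countable case of \cite[Theorem 3.4]{BelkZaremsky22} by a filtration argument. Given any finite collection of elements of $S\V_G$, one observes that only finitely many group elements of $G$ and finitely many coordinates of $S$ are involved, so they all lie in a subgroup of the form $T\V_H \subseteq S\V_G$ with $H$ countable and $T$ a countable $H$-invariant subset of $S$. Applying this to a pair $\alpha,\beta$ of non-trivial elements, simplicity of the countable group $T\V_H$ gives $\beta \in \langle\alpha\rangle^{T\V_H} \subseteq \langle\alpha\rangle^{S\V_G}$, and one is done. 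This is entirely elementary once the countable case is taken as known.

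Your proposal, by contrast, leans on a black box whose availability in the required generality you yourself flag as the ``main point to be careful about'' --- and then do not actually resolve. Matui's simplicity results in \cite{Matui15} are stated for second countable groupoids, and you only assert that Nekrashevych's alternating-group approach ``extends'' to non-second-countable unit spaces without giving a reference or carrying out the extension. Your fallback suggestion (adapt the commutator arguments directly using pure infinite minimality) is plausible but is again only a sketch. So as written, your argument has a genuine gap at exactly the place where the lemma goes beyond what was already known. The paper's countable-exhaustion trick sidesteps this entirely: it never needs any groupoid-theoretic simplicity theorem, only the already-established countable case of Belk--Zaremsky.
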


When $S$ is countable, this is precisely \cite[Theorem 3.4]{BelkZaremsky22}. The argument below, deducing the general case from the countable case, is due to Matthew Zaremsky (personal communication).

\begin{proof}[Proof of Lemma \ref{lem:SVG-simple}]
Each element $\alpha \in S\V_G$ is a composition of finitely many canonical twist homeomorphisms, each of which involves only finitely many elements of $G$ and finitely many dyadic bricks, each of which is supported on finitely many coordinates of $S$. Denote by $H \subseteq G$ the subgroup generated by the relevant elements of $G$, and denote by $T \subseteq S$ the union of the $H$-orbits of the relevant coordinates of $S$. Then $H$ and $T$ are both countable and $\alpha \in T\V_H \subseteq S\V_G$, where we are viewing $T\V_H$ as a subgroup of $S\V_G$ by sending an element $\alpha$ of $T\V_H \subseteq \mathrm{Homeo}(\cC^T)$ to the element $\alpha \times \mathrm{id}_{S \smallsetminus T}$ of $S\V_G \subseteq \mathrm{Homeo}(\cC^S)$, where $\mathrm{id}_{S \smallsetminus T}$ denotes the identity map of $\cC^{S \smallsetminus T}$. By the same argument, any given countable collection of elements of $S\V_G$ is contained in a subgroup of the form $T\V_H \subseteq S\V_G$ with $H$ and $T$ countable.

Now let $\alpha$ and $\beta$ be any two non-trivial elements of $S\V_G$. To prove that $S\V_G$ is simple it will suffice to show that $\beta \in \langle \alpha \rangle^{S\V_G}$, i.e.\ that $\beta$ is contained in the normal closure of $\alpha$ in $S\V_G$. By the previous paragraph, we know that $\alpha,\beta \in T\V_H \subseteq S\V_G$ for some countable subgroup $H \subseteq G$ and some countable $H$-invariant subset $T \subseteq S$. By \cite[Theorem 3.4]{BelkZaremsky22} we know that $T\V_H$ is simple, and hence $\beta \in \langle \alpha \rangle^{T\V_H}$. But $\langle \alpha \rangle^{T\V_H} \subseteq \langle \alpha \rangle^{S\V_G}$, and so we also have $\beta \in \langle \alpha \rangle^{S\V_G}$.
\end{proof}

\bibliographystyle{alpha}
\bibliography{references.bib}

\end{document}